\newtheorem{Proposition}{Proposition}
\newtheorem{Question}{Question}
\newtheorem{Lemma}{Lemma}
\newtheorem{Theorem}{Theorem}
\newtheorem{Theorem*}{Theorem$^*$}
\newtheorem{Proposition*}{Proposition$^*$}
\newcommand{\proj}{\mathbb{P}}
\newcommand{\rarr}{\rightarrow}
\newcommand{\oh}{{\mathcal{O}}}
\newcommand{\com}{\mathbb{C}}
\newcommand{\bpf}{\noindent {\em Proof.} }
\newcommand{\epf}{\qed \vspace{+10pt}}
\begin{document}
\baselineskip=16pt

\title{ The $\kappa$ ring of
the moduli of curves of compact type: I}
\author{R. Pandharipande}
\date{June 2009}

\begin{abstract}
The subalgebra of the tautological ring
of the moduli of 
curves of compact type
generated by the $\kappa$ classes is
studied in all genera. Relations, constructed 
via the virtual geometry of the moduli 
of stable quotients, are used to
obtain minimal sets of generators.
Bases and Betti numbers of the $\kappa$ rings
are computed.
A universality property relating the
higher genus $\kappa$ rings to the
genus 0 rings is stated and proved in a sequel.
The $\lambda_g$-formula for Hodge integrals
arises as
the simplest consequence.
\end{abstract}

\maketitle
\setcounter{tocdepth}{1}
\tableofcontents

\section{Introduction}
\subsection{Curves of compact type}
Let $C$ be a reduced and connected curve over $\com$
with at worst nodal singularities. 
The associated
{\em dual graph} $\Gamma_C$ has vertices corresponding
to the irreducible components of $C$ 
and edges 
corresponding to the nodes.
The curve $C$ is of {\em compact type} if 
$\Gamma_C$ is a tree.
Alternatively, $C$ is of compact type if the
Picard variety of line bundles of fixed multidegree
on $C$ is compact. 

Standard marked points $p_1,\ldots,p_n$ on $C$ must be
distinct and lie in the nonsingular locus.
The pointed curve $(C,p_1,\ldots,p_n)$ is {\em stable} if the
line bundle
$\omega_C(p_1+\ldots+p_n)$ is ample.
Stability implies the condition $2g-2+n>0$ holds.
Let
$$M_{g,n}^c\subset \overline{M}_{g,n}$$ denote 
the open subset of
 genus $g$, $n$-pointed stable curves of compact type.
The complement
$$\overline{M}_{g,n} \setminus M_{g,n}^c = \delta_0$$
is the irreducible divisor of stable curves with a non-disconnecting
node.

Since every nonsingular curve is of compact type,
the inclusion
$$M_{g,n}\subset M_{g,n}^c\ $$
is obtained.
While the Jacobian map 
$$M_{g,n} \rarr A_g$$
from the moduli of nonsingular curves to the moduli of 
principally polarized Abelian varieties does
not extend to $\overline{M}_{g,n}$, the extension
$$M_{g,n} \subset M_{g,n}^c \rarr A_g$$
is easily defined.

\subsection{$\kappa$ classes}

The $\kappa$ classes in the Chow ring{\footnote{Since
the moduli spaces here are Deligne-Mumford stacks, we
will always take Chow rings with 
$\mathbb{Q}$ coefficients.}} $A^*(\overline{M}_{g,n})$
are defined by the following construction.
Let
$$\epsilon: \overline{M}_{g,n+1} \rarr \overline{M}_{g,n}$$
be the universal curve viewed as the ($n+1$)-pointed space, let
$$\mathbb{L}_{n+1} \rarr \overline{M}_{g,n+1}$$
be the line bundle obtained from the cotangent space
of the last marking, 
and let
$$\psi_{n+1} = c_1(\mathbb{L}_{n+1})\ \in A^1(\overline{M}_{g,n+1})$$
be the Chern class.
The $\kappa$ classes, first defined by Mumford, 
are
$$\kappa_i = \epsilon_*(\psi^{i+1}_{n+1})\  \in A^i(\overline{M}_{g,n}), 
\ \ \ i \geq 0\ .$$
The simplest is $\kappa_0$ which equals $2g-2+n$ times the unit
in $A^0(\overline{M}_{g,n})$.
The convention 
$$\kappa_{-1}= \epsilon_*(\psi_{n+1}^0)=0$$
is often convenient. 

The $\kappa$ classes on $M_{g,n}$ and $M_{g,n}^c$
are defined via restriction from $\overline{M}_{g,n}$.
Define the $\kappa$ rings
\begin{eqnarray*}
\kappa^*(M_{g,n}) & \subset & A^*(M_{g,n}),\\
\kappa^*(M_{g,n}^c)& \subset &  A^*(M_{g,n}^c), \\
\kappa^*(\overline{M}_{g,n}) & \subset & A^*(\overline{M}_{g,n}),
\end{eqnarray*}
to be the $\mathbb{Q}$-subalgebras 
generated by the $\kappa$ classes.
Of course,  the $\kappa$ rings are graded by degree.

Since $\kappa_i$ is a  tautological class{\footnote{A discussion
of tautological classes is presented in Section \ref{tay}.}, 
the $\kappa$ rings
are subalgebras of the corresponding tautological rings.
For unpointed nonsingular curves, the $\kappa$ ring equals
the tautological ring,
$$\kappa^*(M_g)= R^*(M_g)\ .$$
The topic of the paper is the compact type case where
the inclusion
$$\kappa^*(M_{g,n}^c) \subset R^*(M_{g,n}^c)$$
is proper even in degree 1.

\subsection{Results}
We present here several results
 about the rings
$\kappa^*(M_{g,n}^c)$. 
The first two yield a minimal set of generators in the
$n>0$ case.

\begin{Theorem}\label{ooo} $\kappa^*(M_{g,n}^c)$ is generated
over $\mathbb{Q}$ by the
classes $$\kappa_1,\kappa_2,\ldots, \kappa_{g-1+\lfloor\frac{n}{2}
\rfloor}.$$
\end{Theorem}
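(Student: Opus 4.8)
The plan is to show that every $\kappa_r$ with $r > g-1+\lfloor n/2\rfloor$ already lies in the subalgebra generated by $\kappa_1,\dots,\kappa_{g-1+\lfloor n/2\rfloor}$. Concretely, I would produce, for each such $r$, a relation valid on $M_{g,n}^c$ whose top-degree term is a nonzero multiple of $\kappa_r$ and whose remaining terms are polynomials in $\kappa_1,\dots,\kappa_{r-1}$. Solving this relation for $\kappa_r$ and then inducting downward on $r$ collapses all high $\kappa$ classes into the asserted finite generating set.

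The relations themselves I would take from the virtual geometry of the moduli of stable quotients: a family of tautological identities in $A^*(\overline{M}_{g,n})$, indexed by the quotient degree, obtained by pushing the virtual class (and its $\mathbb{C}^*$-localization contributions) forward to $\overline{M}_{g,n}$. Each such identity is a sum over decorated stable graphs carrying $\kappa$, $\psi$, and boundary factors. The decisive simplification is the restriction to compact type: since $M_{g,n}^c = \overline{M}_{g,n}\setminus\delta_0$, pulling back to $M_{g,n}^c$ annihilates $\delta_0$ and hence kills every graph carrying a non-separating edge, pruning the relation drastically.

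I would then extract the degree-$r$ part of the restricted relations and isolate the coefficient of $\kappa_r$. The crux is to show this leading coefficient is nonzero precisely when $r > g-1+\lfloor n/2\rfloor$, so that the relation can be solved for $\kappa_r$ modulo lower $\kappa$ monomials. The $n$-dependence of the threshold should arise from the $\psi$-power insertions that the $n$ markings contribute to the relations; via the forgetful comparison $\kappa_i = \pi^*\kappa_i + \psi^i$ these insertions shift the effective genus, and I expect the $\lfloor n/2\rfloor$ with its floor to emerge from how markings pair off in this bookkeeping.

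The main obstacle I anticipate is exactly this nonvanishing together with the sharp identification of the threshold: the leading coefficients are intricate combinatorial sums coming from the localization weights (binomial/Bernoulli-type expressions), and one must verify both that they do not vanish above the bound and that none forces a reduction at or below it (the latter being the minimality statement, which I would defer to the companion Theorem). A secondary technical point is that restriction to $M_{g,n}^c$ leaves the separating-node boundary intact, so the raw relations still carry $\psi$ and separating-boundary terms. To obtain genuine identities \emph{inside} the $\kappa$ subalgebra I would absorb the $\psi$ terms through the forgetful comparison and dispatch the surviving separating-boundary contributions by induction on $(g,n)$, using that the $\kappa$ classes of the lower-genus components are already generated in the required range.
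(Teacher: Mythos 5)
Your overall strategy coincides with the paper's: produce stable-quotient relations whose degree-$r$ part has a nonzero coefficient on the singleton $\kappa_r$ whenever $r>g-1+\lfloor n/2\rfloor$, solve for $\kappa_r$, and induct downward. But the step you yourself flag as ``the main obstacle'' --- the nonvanishing of that leading coefficient and the sharp identification of the threshold --- is the entire mathematical content of the theorem, and your proposal contains no mechanism for it. In the paper this occupies all of Section \ref{evrel}: the relations $\epsilon^c_*( c_{2g-2+k}(\mathbb{A}_d^*-\mathbb{B}_d))=0$ are packaged into the generating function $R_{g,n}(t,-z)=(1-tz)^{2g-2+n}\exp(-\sum C_r^d\,\kappa_{r-d}\,t^rz^d)$; the coefficient of the singleton $\kappa_\ell$ is governed by a connected-count series with a pole of exact order $2\ell$ at $tz=1$ (Lemma \ref{bbb} and Proposition \ref{vvv}), whose leading coefficient $\alpha_\ell$ is the $\ell$-th coefficient of $\log\bigl(1+\sum_{m\geq 1}(2m)!!\,x^m\bigr)$; and Proposition \ref{kwk2} proves $\alpha_\ell\neq 0$ by a two-part induction on the inverse series. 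The threshold $2g-2+n-2\ell<0$ then falls out of comparing the exponent $2g-2+n$ (which is just $\kappa_0$) against the pole order $2\ell$ --- not from any pairing-off of markings via the forgetful comparison, which is your proposed source of the $\lfloor n/2\rfloor$. Without a substitute for this analysis the argument does not close.

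There is also a structural misconception: you assume the restricted relations still carry $\psi$ and separating-boundary terms, to be ``absorbed'' via $\kappa_i=\pi^*\kappa_i+\psi^i$ and ``dispatched'' by induction on $(g,n)$. That repair would not work even in principle --- the push-forward of a class supported on a separating boundary divisor is not in general an element of the subalgebra generated by the $\kappa_i$, so a relation containing such terms lives in $R^*(M_{g,n}^c)$ rather than $\kappa^*(M_{g,n}^c)$ and cannot be solved for $\kappa_r$ inside the $\kappa$ ring. The paper avoids the issue entirely: the specific choice of $\com^*$-lift makes every fixed locus except the principal component $\overline{M}_{g,n|d}/S_d$ contribute zero over $M_{g,n}^c$ (using $\lambda_h|_{M^c_{h,*}}=0$ and the weight and evaluation vanishings (i)--(iii)), and Lemma \ref{nnn} shows the push-forward from the principal component is literally a polynomial in the $\kappa_i$ with no $\psi$ or boundary remainder. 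Establishing that the relations land in the $\kappa$ subalgebra on the nose is a necessary ingredient of the proof, not an optional simplification.
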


\begin{Theorem} \label{ttt} 
If $n>0$, 
there are no relations
among $$\kappa_1, \ldots, \kappa_{g-1+
\lfloor \frac{n}{2}\rfloor
} \in \kappa^*(M_{g,n}^c)$$
in degrees $\leq g-1+ \lfloor \frac{n}{2}\rfloor
$.
\end{Theorem}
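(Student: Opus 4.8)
The plan is to prove the equivalent statement that, for each degree $d\le g-1+\lfloor \frac{n}{2}\rfloor$, the monomials $\kappa_\mu=\kappa_{a_1}\cdots\kappa_{a_\ell}$ of degree $d$ are linearly independent in $A^d(M_{g,n}^c)$. Since a monomial of degree $d\le g-1+\lfloor \frac{n}{2}\rfloor$ can only involve the classes $\kappa_1,\ldots,\kappa_{g-1+\lfloor n/2\rfloor}$, which by Theorem \ref{ooo} are all the generators in this range, this independence is exactly the assertion of Theorem \ref{ttt}. I would detect the independence by pairing, after lifting to the proper moduli space $\mgn$, against explicit test classes.

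The geometric heart of the argument is the vanishing of $\lambda_g=c_g(\hodge)$ on the boundary $\delta_0$. Let $\nu:\overline{M}_{g-1,n+2}\rarr\mgn$ be the gluing map whose image is $\delta_0$. The bundle $\nu^*\hodge$ is an extension of the trivial line bundle by the Hodge bundle $\hodge_{g-1}$ of the normalization, so its total Chern class equals that of $\hodge_{g-1}$; as $\hodge_{g-1}$ has rank $g-1$, we get $\nu^*\lambda_g=0$, and therefore $\lambda_g$ restricts to zero on $\delta_0$. Hence for any class supported on $\delta_0$ and any $\beta\in A^*(\mgn)$ the integral against $\lambda_g\cdot\beta$ vanishes. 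By the excision sequence $A_*(\delta_0)\rarr A_*(\mgn)\rarr A_*(M_{g,n}^c)\rarr 0$, a relation $\sum_\mu c_\mu\kappa_\mu=0$ on $M_{g,n}^c$ lifts to a class on $\mgn$ supported on $\delta_0$, so that
\[
\sum_\mu c_\mu\int_{\mgn}\kappa_\mu\cdot\lambda_g\cdot\beta=0\qquad\text{for every }\beta.
\]
It therefore suffices to produce, for each $d\le g-1+\lfloor \frac{n}{2}\rfloor$, test classes $\beta_j$ of complementary degree $2g-3+n-d$ for which the pairing matrix $\bigl(\int_{\mgn}\kappa_\mu\,\lambda_g\,\beta_j\bigr)_{\mu,j}$ has rank equal to the number of degree-$d$ monomials $\kappa_\mu$. (When $g=0$ the divisor $\delta_0$ is empty, $M_{0,n}^c=\overline{M}_{0,n}$, and the same pairing applies directly with $\lambda_g=1$.)

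For the test classes I would use monomials $\beta=\psi_1^{c_1}\cdots\psi_n^{c_n}$ in the cotangent classes at the markings; this is precisely where the hypothesis $n>0$ is used, and one checks that the complementary degree $2g-3+n-d$ stays nonnegative throughout the range $d\le g-1+\lfloor \frac{n}{2}\rfloor$ exactly because $n>0$. Each pairing $\int_{\mgn}\kappa_{a_1}\cdots\kappa_{a_\ell}\,\lambda_g\,\psi_1^{c_1}\cdots\psi_n^{c_n}$ is then evaluated by a standard manipulation: using the Arbarello--Cornalba comparison of $\kappa$ and $\psi$ classes under the forgetful maps, the product of $\kappa$ classes is expressed through pushforwards of $\psi$-monomials from $\overline{M}_{g,n+\ell}$, reducing each pairing to a $\Q$-linear combination of pure $\psi$--$\lambda_g$ integrals. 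These are computed by the $\lambda_g$-formula
\[
\int_{\overline{M}_{g,N}}\psi_1^{e_1}\cdots\psi_N^{e_N}\,\lambda_g=\binom{2g-3+N}{e_1,\ldots,e_N}\,c_g,\qquad c_g\neq 0,
\]
so that every entry of the pairing matrix becomes $c_g$ times an explicit rational combination of multinomial coefficients depending only on the exponents. The genus thus enters solely through the nonzero scalar $c_g$, and the problem is reduced to a purely combinatorial question.

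The main obstacle is this last, combinatorial step: showing that the resulting matrix of multinomial combinations is nondegenerate for every $d\le g-1+\lfloor \frac{n}{2}\rfloor$. I would order the monomials $\kappa_\mu$ by the underlying partition, refined by length, and choose the test exponents $(c_1,\ldots,c_n)$ so that the comparison formula makes the matrix triangular, with diagonal entries a single nonzero multinomial times $c_g$. Because the $\lambda_g$-formula renders these higher-genus evaluations formally identical to genus-$0$ $\psi$-integrals, the needed nondegeneracy is equivalent to a self-contained genus-$0$ statement. Proving this triangularity --- and, in particular, showing that $g-1+\lfloor \frac{n}{2}\rfloor$ is exactly the largest degree for which enough independent $\psi$-test monomials are available to separate all partitions of $d$ --- is where the real work lies and where the precise threshold $\lfloor \frac{n}{2}\rfloor$ must emerge.
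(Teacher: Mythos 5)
Your general framework---using the $\lambda_g$-evaluation $\phi$ to pair degree-$d$ $\kappa$ monomials against test classes of complementary degree, with the lift to $\mgn$ and the vanishing of $\lambda_g$ on $\delta_0$ justifying that a relation on $M_{g,n}^c$ kills all such pairings---is exactly the paper's framework. But your choice of test classes creates a genuine gap that cannot be repaired within your setup. You propose to pair only against monomials $\psi_1^{c_1}\cdots\psi_n^{c_n}$ at the $n$ markings. Already for $n=1$ there is exactly \emph{one} such monomial in each degree, namely $\psi_1^{2g-2-d}$, so your pairing matrix has a single column and rank at most $1$; yet Theorem \ref{ttt} for $n=1$ asserts the independence of all $|P(d)|$ monomials for $d\le g-1$, and $|P(g-1)|>1$ as soon as $g\ge 3$ (e.g.\ $g=4$, $d=3$: three monomials $\kappa_3$, $\kappa_2\kappa_1$, $\kappa_1^3$ against the lone test class $\psi_1^3$). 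The deferred ``combinatorial step'' is therefore not merely the hard part of the argument---it is false for the test family you chose. The same shortage of separating classes afflicts all small $n$.

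The paper's proof avoids this by taking the test classes to be \emph{boundary strata} classes $[S_{\mathbf{q}}]$ indexed bijectively by the partitions $\mathbf{q}$ themselves: chains of elliptic curves (with the marking(s) placed on the ends) decorated with elliptic tails, one stratum per partition of the relevant degree. Restricting $\lambda_g$ to such a stratum distributes a $\lambda_1$ to each elliptic component, and the pairing $\int \kappa_{\mathbf{p}}\cdot[S_{\mathbf{q}}]\cdot\lambda_g$ is computed by distributing the factors $\kappa_{p_i}$ over the components; dimension constraints on the moduli of each component force the matrix to be upper-triangular in the length partial order, with diagonal entries products of $\int_{\overline{M}_{1,p+1}}\kappa_p\lambda_1=\tfrac{1}{24}$, hence nonsingular. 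The cases $n\ge 3$ are then reduced to $n=1,2$ by trading elliptic components for rational components with two extra markings (which is where the threshold $g-1+\lfloor\frac{n}{2}\rfloor$ actually emerges). If you want to salvage your approach, you must enlarge your test family from $\psi$ monomials to boundary strata (or otherwise to a set of at least $|P(d)|$ classes that can be shown to separate partitions); with $\psi$ classes alone the count is simply wrong.
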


Since $\kappa^*(M_{g,n}^c) \subset R^*(M_{g,n}^c)$, 
the socle and vanishing results for the
 tautological ring \cite{FP,GV}
imply
\begin{equation}\label{vanni}
\kappa^{2g-3+n}(M_{g,n}^c)=\mathbb{Q}, \ \ \ \ \ \ 
\kappa^{>2g-3+n}(M_{g,n}^c)=0\ .
\end{equation}
By Theorem \ref{ttt}, 
all the interesting relations
among the $\kappa$ classes lie in degrees
from $g+
\lfloor \frac{n}{2}\rfloor
$ to $2g-3+n$.

By Theorem 1, the classes $\kappa_1,\ldots, \kappa_{g-1}$ 
generate $\kappa^*(M_g^c)$. Since 
$M_g^c$ is 
excluded in Theorem \ref{ttt}, the possibility of a 
relation among
the $\kappa$ classes in degree $g-1$ is left open. However, no
lower relations exist.

\begin{Proposition} \label{vvww} There are no relations
among $\kappa_1, \ldots, \kappa_{g-1} \in \kappa^*(M_{g}^c)$
in degrees $\leq g-2$ and at most a single relation in
degree $g-1$.
\end{Proposition}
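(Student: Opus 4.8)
The plan is to compare $M_g^c$ with $M_{g,1}^c$ through the universal curve $\pi\colon M_{g,1}^c\to M_g^c$ and to feed in Theorem \ref{ttt} in the case $n=1$. First I record two standard facts. The comparison of $\kappa$ classes under $\pi$ reads
$$\pi^*\kappa_i=\kappa_i-\psi_1^{\,i}\qquad\text{on }M_{g,1}^c,$$
and $\pi$ admits no section but satisfies $\pi_*\psi_1=\kappa_0=2g-2$, so that $\pi_*\big(\psi_1\cdot\pi^*(-)\big)=(2g-2)\,\mathrm{id}$. As $g\ge 2$, the pullback $\pi^*$ is therefore injective. Consequently a degree $d$ relation $P(\kappa_1,\dots,\kappa_{g-1})=0$ on $M_g^c$ holds if and only if
$$P\big(\kappa_1-\psi_1,\ \kappa_2-\psi_1^{2},\ \dots,\ \kappa_{g-1}-\psi_1^{\,g-1}\big)=0 \qquad\text{in }R^*(M_{g,1}^c).$$

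Next I would expand the left side in powers of $\psi_1$, writing it as $\sum_{k\ge 0}\psi_1^{\,k}Q_k$ with each $Q_k\in\kappa^{d-k}(M_{g,1}^c)$ a polynomial in the $\kappa$ classes and with $\psi_1$-free term $Q_0=P(\kappa_1,\dots,\kappa_{g-1})$. Here Theorem \ref{ttt} enters: for $d\le g-1$ the monomials in $\kappa_1,\dots,\kappa_{g-1}$ are linearly independent on $M_{g,1}^c$, so in the absence of relations coupling $\psi_1$ to the $\kappa$ classes the $\psi_1$-free term $Q_0=P(\kappa)$ would have to vanish by itself, forcing $P\equiv 0$. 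Hence every relation on $M_g^c$ must be carried by such a coupling, and the problem reduces to bounding the intersection of the graph subspace $\{\,P(\kappa_\bullet-\psi_\bullet^\bullet)\,\}$ with the ideal of relations among $\psi_1,\kappa_1,\dots,\kappa_{g-1}$ in $R^*(M_{g,1}^c)$ in degrees $\le g-1$.

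To obtain the sharp count I would pair the putative relation against the one–dimensional socle $\kappa^{2g-2}(M_{g,1}^c)=\mathbb{Q}$ of \eqref{vanni}. Multiplying $\sum_k\psi_1^{\,k}Q_k=0$ by the complementary monomials in $\psi_1$ and in the $\kappa$ classes and integrating produces a linear system whose coefficient matrix consists of $\psi$--$\kappa$ integrals on $M_{g,1}^c$; these are exactly the Hodge integrals governed by the $\lambda_g$-type evaluations that underlie the paper. The expected outcome is that this matrix is nondegenerate in every degree $d\le g-2$, forcing $P\equiv 0$, while in degree $d=g-1$ its rank drops by precisely one, leaving a one–dimensional space of possible relations. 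I expect the main obstacle to be exactly this last point: controlling the $\psi_1$-corrections, i.e.\ the coupling between $\psi_1$ and the $\kappa$ classes on $M_{g,1}^c$, and verifying that the associated socle pairing loses rank by one --- and no more --- in degree $g-1$. This single–step rank drop is the algebraic shadow of the fact that $M_g^c$ lies one marked point below the range in which Theorem \ref{ttt} controls the relations outright, and it is what separates the clean vanishing in degrees $\le g-2$ from the lone admissible relation in degree $g-1$.
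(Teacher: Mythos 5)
Your setup is sound as far as it goes: $\pi^*\kappa_i=\kappa_i-\psi_1^{\,i}$ is correct, $\pi^*$ is injective by the projection formula with $\pi_*\psi_1=2g-2$, and the reduction of a relation on $M_g^c$ to the vanishing of $P(\kappa_\bullet-\psi_1^{\,\bullet})$ in $R^*(M_{g,1}^c)$ is valid. But the proof stops exactly where the proposition begins. The entire content of the statement is the nondegeneracy (respectively, corank at most one in degree $g-1$) of some pairing, and your argument only \emph{expects} this: you assert that the matrix of $\psi$--$\kappa$ integrals against complementary interior monomials is nondegenerate for $d\le g-2$ and drops rank by exactly one for $d=g-1$, without any computation or even a mechanism that would force upper-triangularity. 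Note also that Theorem \ref{ttt} for $n=1$ gives independence of the $\kappa$ monomials alone on $M_{g,1}^c$; it says nothing about independence of the mixed monomials $\psi_1^{k}\kappa_{\mathbf q}$, which is what your expansion $\sum_k\psi_1^{k}Q_k=0$ requires you to control. Restricting the dual classes to interior $\psi$--$\kappa$ monomials is itself a risky choice: nondegeneracy of the interior-versus-interior $\lambda_g$-pairing in these degrees is not established anywhere in the paper and is close in spirit to the (conjectural) Gorenstein property, so the missing step is not a routine verification.

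The paper's proof avoids all of this by working directly on $M_g^c$ and pairing the $\kappa$ monomials with explicitly constructed \emph{boundary} strata under the $\lambda_g$-evaluation. For degree $g-1$ it excludes the single partition $(1,\ldots,1)$ and, for each remaining $\mathbf p\in P^*(g-1)$, builds a codimension $g-2$ stratum $U_{\mathbf p}$ from a chain whose first component has genus $2$ followed by elliptic components with elliptic tails; for degrees $\le g-2$ it uses chains of elliptic curves with two untailed ends giving codimension $g-1$ strata $U'_{\mathbf p}$. The dimension constraints on distributing the $\kappa_{p_i}$ over the components make the pairing matrices upper-triangular in the length partial order, with diagonal entries products of $\int_{\overline M_{1,p+1}}\kappa_p\lambda_1=\tfrac1{24}$, hence nonsingular (Lemmas \ref{thayav} and \ref{thayav2}). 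This gives independence of all monomials in degrees $\le g-2$ and of all but possibly one dimension in degree $g-1$, which is precisely the claim. To salvage your approach you would need to replace the interior dual classes by strata classes (pulled back to $M_{g,1}^c$ or used directly on $M_g^c$) and actually exhibit the triangular structure; as written, the decisive step is absent.
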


The structure of $\kappa^*(M_g)$ has been studied 
for many years \cite{Mum}.
Faber \cite{Faber} conjectured the
classes $\kappa_1, \ldots, \kappa_{\lfloor\frac{g}{3}\rfloor}$
form a minimal set of generators for $\kappa^*(M_g)$. 
The result was proven in cohomology by
Morita \cite{Mor}, and a second proof, via admissible
covers and valid in Chow, was given by Ionel \cite{Ion}.
A uniform view of $M_g$, $M_g^c$, and $\overline{M}_g$
was proposed in \cite{FP1}, but very few results in
the latter two cases have been obtained.

\subsection{Relations}
Theorem \ref{ooo} is proven by finding sufficiently many
geometric relations among the 
$\kappa$ classes. The method uses the virtual
geometry of the moduli space of stable quotients
introduced in \cite{MOP} and reviewed in 
Section \ref{sq}. Nonstandard moduli spaces of
pointed curves of compact type are required
for the construction.

Following the notation of \cite{MOP}, let $\overline{M}_{g,n|d}$
be the moduli space of genus $g$
curves of compact type with markings
$$ \{p_1\ldots,p_n\} \ \cup \
\{\widehat{p}_{1},\ldots,\widehat{p}_{d}\}
\in C$$
lying the nonsingular locus
and satisfying the conditions
\begin{enumerate}
\item[(i)]
the points $p_i$ are distinct,
\item[(ii)]
the points $\widehat{p}_{j}$ are distinct from the points $p_i$,
\end{enumerate}
with stability given by the ampleness
of $$\omega_C(\sum_{i=1}^n {p}_{i} +\epsilon \sum_{j=1}^d  \widehat{p}_j)$$
for every strictly positive $\epsilon \in {\mathbb{Q}}$.
The conditions allow the points $\widehat{p}_{j}$ and $\widehat{p}_{j'}$ to coincide.
The moduli space  $\overline{M}_{g,n|d}$ is a nonsingular, irreducible,
Deligne-Mumford stack.{\footnote{In fact, $\overline{M}_{g,n|d}$ 
is a special case of the moduli of pointed curves with weights
studied by \cite{Has,LM}.}}

Denote the open locus of curves of compact type by
$$M^c_{g,n|d}\subset \overline{M}_{g,n|d}\ .$$
Consider the universal curve
$$\pi: U \rarr {M}^c_{g,n|d}.$$
The morphism $\pi$ has sections $\sigma_1, \ldots, \sigma_d$
corresponding to the markings $\widehat{p}_1, \ldots, \widehat{p}_d$.
Let 
$$\sigma \subset U$$
be the divisor obtained from the union of the $d$ sections. 
The two rank $d$ bundles on ${M}_{g,n|d}^c$,
$$\mathbb{A}_d = \pi_*(\oh_\sigma), \ \ \ \
\mathbb{B}_d = \pi_*(\oh_\sigma(\sigma)),$$
play important roles in the geometry.

The new relations studied here 
arise from the vanishing of the
Chern classes of the virtual bundle
$\mathbb{A}_d^*- \mathbb{B}_d$ on
$M_{g,n|d}^c$ after push-forward via the proper
forgetful map
$$\epsilon^c: M_{g,n|d}^c \rarr M_{g,n}^c.$$

\begin{Theorem} \label{rrr}
For all $k > n$, 
$$\epsilon^c_* \left( c_{2g-2+k}(\mathbb{A}_d^*- \mathbb{B}_d) \right) = 0  \  
\in A^*(M^c_{g,n}).$$
\end{Theorem}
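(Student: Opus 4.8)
The plan is to turn the statement into a Grothendieck--Riemann--Roch computation on the universal curve $\pi\colon U\to M^c_{g,n|d}$ followed by a fibre integration along $\epsilon^c$, and to locate the threshold $k>n$ in the degree $2g-2+n$ of the log canonical bundle $\omega_C(\sum_{i=1}^n p_i)$. First I would rewrite the virtual bundle in $K$-theory. The two defining sequences
$$0\to \oh_U(-\sigma)\to \oh_U\to \oh_\sigma\to 0,\qquad 0\to \oh_U\to \oh_U(\sigma)\to \oh_\sigma(\sigma)\to 0$$
give $\mathbb{A}_d=R\pi_*\oh_U-R\pi_*\oh_U(-\sigma)$ and $\mathbb{B}_d=R\pi_*\oh_U(\sigma)-R\pi_*\oh_U$. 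Using $R\pi_*\oh_U=\oh-\mathbb{E}^\vee$ and relative Serre duality in the form $(R\pi_*\oh_U(-\sigma))^\vee=-R\pi_*(\oh_U(\sigma)\otimes\omega_\pi)$, I would obtain the clean identity
$$\mathbb{A}_d^*-\mathbb{B}_d=\big(2\hpo\oh-\mathbb{E}-\mathbb{E}^\vee\big)+\Big(R\pi_*\big(\oh_U(\sigma)\otimes\omega_\pi\big)-R\pi_*\oh_U(\sigma)\Big).$$
The first summand is pulled back from $M^c_{g,n}$, and since $\ch_{2l}(\mathbb{E})=0$ on moduli of compact type one has $c(\mathbb{E})c(\mathbb{E}^\vee)=1$, so this summand is Chern-trivial; the entire content sits in the second, $\sigma$-dependent summand.

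Next I would compute its Chern character. As $\oh_U(\sigma)\otimes\omega_\pi$ has fibrewise degree $d+2g-2$ it has no higher direct image, and Grothendieck--Riemann--Roch gives
$$\ch\Big(R\pi_*\big(\oh_U(\sigma)\otimes\omega_\pi\big)-R\pi_*\oh_U(\sigma)\Big)=\pi_*\Big(e^{[\sigma]}\big(e^{\psi}-1\big)\,\text{Td}(T_\pi)\Big),$$
where $\psi=c_1(\omega_\pi)$ and $[\sigma]$ is the class of the light divisor. Converting to total Chern class and extracting the degree $2g-2+k$ part expresses $c_{2g-2+k}(\mathbb{A}_d^*-\mathbb{B}_d)$ as an explicit universal polynomial in the classes $\pi_*\!\big([\sigma]^a\psi^b\big)$. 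I would then apply $\epsilon^c_*$, i.e.\ integrate over the fibres of the forgetful map, which adjoin the $d$ light points; after this second pushforward every surviving class is a polynomial in the $\kappa_i$ on $M^c_{g,n}$, so the output is manifestly a $\kappa$ relation.

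The decisive step --- and the one I expect to be the main obstacle --- is the vanishing for $k>n$. Because the $\sigma$-dependent summand is a genuine difference of honest bundles (of ranks $d+g-1$ and, for $d$ large, $d+1-g$), this is not a naive fibre rank bound: the Chern classes are nonzero in all degrees upstairs, and the vanishing is truly a property of the pushforward $\epsilon^c_*$. The natural scale is the degree $2g-2+n=\deg\,\omega_C(\sum p_i)$, and I would try to show that in the iterated pushforward $(\epsilon^c\circ\pi)_*$ the contributions of total degree exceeding $2g-2+n$ cancel combinatorially --- equivalently, that the generating series $\sum_k \epsilon^c_* c_{2g-2+k}(\mathbb{A}_d^*-\mathbb{B}_d)$ is a polynomial of bounded degree. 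The technical heart is controlling these fibre integrals, including the Hassett-type bubbling that occurs when light points collide with the markings $p_i$ or with the nodes of $C$, since it is exactly this geometry that converts the bare canonical degree $2g-2$ into the log canonical degree $2g-2+n$ and fixes the threshold at $k=n$.
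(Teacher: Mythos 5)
Your reduction of $c(\mathbb{A}_d^*-\mathbb{B}_d)$ to a universal polynomial in the classes $\pi_*([\sigma]^a\psi^b)$, and hence of $\epsilon^c_*\,c_{2g-2+k}(\mathbb{A}_d^*-\mathbb{B}_d)$ to a polynomial in the $\kappa_i$, is correct and is essentially the content of the paper's evaluation rules (the Chern class formulas \eqref{frrd} and Lemma \ref{nnn}). But the step you flag as ``the main obstacle'' is the entire theorem, and the mechanism you propose for it cannot work. The expression $\epsilon^c_*\,c_{2g-2+k}(\mathbb{A}_d^*-\mathbb{B}_d)$ is the \emph{same} universal $\kappa$-polynomial whether computed over $M^c_{g,n}$ or over $\overline{M}_{g,n}$, since all the ingredients ($[\sigma]$, $\psi$, the diagonal calculus, the fibre integrals over the forgetful map) are defined on the compactified space and restrict. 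That polynomial is not zero: for $g=3$, $n=0$, $d=2$, $k=1$ it is $-18\kappa_3+2\kappa_1\kappa_2$, which is a nontrivial relation holding in $A^*(M_3^c)$ but not an identity of universal polynomials and not a relation on $\overline{M}_3$. So no combinatorial cancellation in the iterated pushforward, and no bounded-degree statement for the generating series $\sum_k\epsilon^c_*\,c_{2g-2+k}$, can be established by GRR and fibre integration alone; the vanishing is a genuine property of the $\kappa$ ring of \emph{compact type} curves and requires a geometric input that sees the restriction to $M^c_{g,n}$. (Your one appeal to compact type, via $c(\mathbb{E})c(\mathbb{E}^\vee)=1$, does not supply this: Mumford's relation holds on all of $\overline{M}_{g,n}$.)

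The paper obtains that input from the virtual geometry of the moduli of stable quotients $\overline{Q}_{g,n}(\proj^1,d)$. One forms the class $\Phi_{g,n,d}=\bigl(\mathsf{e}(R^1\pi_*(S_U))\cup\prod_{i=1}^n\ev_i^*([\infty])\bigr)\cap[\overline{Q}_{g,n}(\proj^1,d)]^{vir}$ and observes that $\nu_*(\Phi_{g,n,d}\,\mathsf{e}([1])^k)$ vanishes in the non-equivariant limit for trivial weight reasons. Virtual localization then expresses this vanishing class, \emph{after restriction to} $M^c_{g,n}$, as $\pm\,\epsilon^c_*\bigl(c_{2g-2+k}(\mathbb{A}_d^*-\mathbb{B}_d)\bigr)$: the point is that every non-principal fixed locus contributes zero on compact type, either because $\mathsf{e}(R^1\pi_*(S_U))$ produces a factor $\lambda_h$ and $\lambda_h|_{M^c_{h,*}}=0$ (van der Geer), or because of the weight-zero and evaluation vanishings. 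This is where compact type enters, and the threshold $k>n$ comes from the $n$ factors $\ev_i^*([\infty])$ (each contributing a power of the equivariant weight), not from the degree of the log canonical bundle as you guessed. Without some substitute for this global argument, your proposal does not prove the statement.
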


The proofs of Theorem \ref{rrr} and richer variants
are given in Section \ref{pr3}.
The $\epsilon^c$ push-forwards are calculated by simple
rules explained in Section \ref{calrul}. In particular, we will
see 
Theorem \ref{rrr} yields relations purely among the $\kappa$ classes
on the moduli space $M_{g,n}^c$.

Theorem \ref{ooo} is proven for $M_{g,n}^c$ in Section \ref{evrel}
 by examining the relations of
Theorem \ref{rrr}. 
The coefficient of $\kappa_i$ for $i\geq g-1+
\lfloor\frac{n}{2}\rfloor$
is shown to be nonzero. The method yields an effective evaluation of
the relations.
Theorem \ref{ttt} and Proposition 1 are proven in Section \ref{indy}
by intersection calculations in the tautological ring.

\subsection{Genus 0}
The strategy of Theorem 3 does {\em not} generate
all the relations in $\kappa^*(M_{g}^c)$. The first
example of failure, occurring in genus 5, is discussed in Section
\ref{uniz}.

Since all genus 0 curves are of compact type,
$$M_{0,n}^c=\overline{M}_{0,n}.$$
For emphasis here, we will use the notation $M_{0,n}^c$.
The following  universality property, motivated by the relations
of Theorem 3, gives considerable weight to the
genus 0 case.

Let $x_1,x_2,x_3, \ldots $ be variables with $x_i$ of degree $i$. Let 
$$f\in \mathbb{Q}[x_1,x_2,x_3,\ldots]$$
be {\em any} graded homogeneous polynomial.

\begin{Theorem}\label{mmmj}
If
$f(\kappa_i) = 0 \in \kappa^*(M_{0,n}^c)$, then
$$f(\kappa_i) = 0 \in \kappa^*(M_{g,n-2g}^c)$$
for all genera $g$ for which $n-2g\geq 0$.
\end{Theorem}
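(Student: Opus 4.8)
The plan is to reduce the assertion to a comparison of top-degree evaluations, to carry out that comparison by an explicit Hodge-integral computation, and then to propagate the conclusion to all degrees by a multiplicative induction. The reduction rests on two numerical coincidences of the substitution $(g,n)\mapsto(g,n-2g)$: the value $\kappa_0=2g-2+n$ is preserved, and the socle degree $2g-3+n$ becomes $n-3$, which is exactly the socle degree $2\cdot 0-3+n$ of $\kappa^*(M_{0,n}^c)$. By \eqref{vanni}, both $\kappa^*(M_{g,n-2g}^c)$ and $\kappa^*(M_{0,n}^c)$ are therefore graded rings with the same unit class $\kappa_0$ and with one-dimensional socle in the same top degree $n-3$. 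To evaluate that socle, I would use the class $\lambda_g$: it extends to $\overline{M}_{g,n}$ and vanishes on the boundary divisor $\delta_0$, so for a top-degree $\kappa$-class $\alpha$ the integral $\int_{\overline{M}_{g,n}}\alpha\,\lambda_g$ is independent of the extension of $\alpha$ across $\delta_0$ and defines a nonzero evaluation functional on $\kappa^{2g-3+n}(M_{g,n}^c)$; in genus $0$ one has $\lambda_0=1$, so the evaluation is ordinary integration on $\overline{M}_{0,n}$.

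The main computation is to prove these two evaluations proportional. I would write each monomial $\kappa_{a_1}\cdots\kappa_{a_k}$ with $\sum a_i=n-3$ as a combination of $\psi$-class pushforwards from spaces with additional markings, using the Arbarello--Cornalba/Kaufmann--Manin--Zagier conversion, whose combinatorial coefficients (including the collision corrections) do not depend on the genus. Pulling $\lambda_g$ back along the forgetful maps and applying the $\lambda_g$-formula $\int_{\overline{M}_{g,m}}\psi_1^{b_1}\cdots\psi_m^{b_m}\lambda_g=\binom{2g-3+m}{b_1,\ldots,b_m}\,b_g$, with $b_g=\int_{\overline{M}_{g,1}}\psi_1^{2g-2}\lambda_g\neq 0$, turns every resulting term into a multinomial times $b_g$. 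The key point is that the upper index $2g-3+m$ produced on the genus $g$, $(n-2g)$-pointed side coincides with the corresponding upper index on the genus $0$, $n$-pointed side: the $+2g$ from the genus cancels the $-2g$ in the markings, the added markings carry identical $\psi$-exponents, and the original markings contribute only trivial entries to the multinomial. Hence all combinatorial bookkeeping matches, and one obtains $\int_{\overline{M}_{g,n-2g}}\alpha\,\lambda_g=b_g\int_{\overline{M}_{0,n}}\alpha$ for every top-degree $\kappa$-monomial $\alpha$. Since $b_g\neq 0$, a degree $n-3$ polynomial in the $\kappa$ classes vanishes in genus $0$ if and only if it vanishes in genus $g$.

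To pass from top degree to all degrees I would use downward induction on the degree $d$. Suppose $f$ is homogeneous of degree $d<n-3$ with $f(\kappa)=0$ in $\kappa^d(M_{0,n}^c)$. For each $j\geq 1$ the product $\kappa_j f$ is a genus $0$ relation of degree $d+j>d$, so by the inductive hypothesis $\kappa_j\,f(\kappa)=0$ in $\kappa^{d+j}(M_{g,n-2g}^c)$. Thus $f(\kappa)\in\kappa^d(M_{g,n-2g}^c)$ is annihilated by every positive-degree generator, i.e.\ it lies in the annihilator of the augmentation ideal. The base case $d=n-3$ is precisely the proportionality established above.

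The hard part is the last implication: deducing $f(\kappa)=0$ from the fact that $f(\kappa)$ is killed by all $\kappa_j$. This is exactly the statement that the socle of $\kappa^*(M_{g,n-2g}^c)$, understood as the annihilator of the augmentation ideal, is concentrated in the top degree $n-3$, with no annihilator class in any intermediate degree. The vanishing \eqref{vanni} records only that the top graded piece is one-dimensional and that higher pieces vanish, and the proportionality of the socle pairings is likewise insufficient, since it controls only pairings landing in top degree, which vanish automatically for such a class. Proving this socle concentration---equivalently, that multiplication by the generators $\kappa_j$ detects every nonzero class below the top degree---is therefore the crux, and I expect it to require the finer structural information (bases and Betti numbers) of the higher-genus $\kappa$ ring rather than the socle evaluation alone.
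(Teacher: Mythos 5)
There is a genuine gap, and you have in fact located it yourself in your last paragraph. Before addressing it: note that this paper does not prove Theorem \ref{mmmj} at all --- it is explicitly deferred to the sequel \cite{kap2}, where it is obtained from the virtual geometry of the moduli space of stable maps (producing genus-$g$ relations directly, in the spirit of Theorem \ref{rrr} and Proposition \ref{ggttr}, rather than descending from the socle). So there is no in-paper proof to match; your attempt must be judged on its own. Your top-degree step is correct and is precisely the paper's remark in Section \ref{lammm} that the $\lambda_g$-formula is equivalent to the $\deg f = n-3$ case of Theorem \ref{mmmj}: the genus-independent $\kappa$-to-$\psi$ conversion plus the multinomial $\binom{2g-3+m}{b_1,\ldots,b_m}$, with $m=n-2g+j$, does reproduce the genus-$0$ upper index $n-3+j$, giving $\phi_g(\kappa_{\mathbf{a}}) = b_g\int_{\overline{M}_{0,n}}\kappa_{\mathbf{a}}$ with $b_g\neq 0$.

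The descent to degrees $d<n-3$ is where the argument fails. Your induction reduces the claim to: any class in $\kappa^d(M^c_{g,n-2g})$, $d<n-3$, annihilated by every $\kappa_j$ with $j\geq 1$ must vanish. This does not follow from \eqref{vanni}, which only controls the top graded piece, and it cannot be replaced by nondegeneracy of the pairing $\kappa^d\times\kappa^{n-3-d}\rightarrow\mathbb{Q}$: the Betti numbers of Theorem \ref{htt5} are not palindromic (e.g.\ $B_{10}$ has $\dim\kappa^4=5$ but $\dim\kappa^3=3$), so that pairing is genuinely degenerate in one factor and the $\kappa$ ring is not Gorenstein. The weaker statement you actually need --- that the annihilator of the augmentation ideal is concentrated in top degree --- may well be true, but every tool in this paper that could certify it (the basis of Theorem \ref{htt5} transported to genus $g$, or the isomorphism of Theorem \ref{izz}) is itself proved using Theorem \ref{mmmj} as input, so your route is circular as it stands. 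The paper's escape from this trap in \cite{kap2} is to avoid the socle-descent strategy altogether and instead manufacture the genus-$g$ relations geometrically.
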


We expect 
variants of Theorem 3 to provide
all relations in $\kappa^*(M_{0,n}^c)$.
A precise statement is
given in Section \ref{bbbn}.
The proof of Theorem \ref{mmmj}, obtained
by stable map techniques, is given in the sequel \cite{kap2}.

\subsection{$\lambda_g$-formula}  
\label{lammm}
The rank $g$ Hodge bundle over the moduli space of curves
$$\mathbb{E} \rarr \overline{M}_{g,n}$$
has fiber $H^0(C,\omega_C)$ over $[C,p_1,\ldots,p_n]$.
Let $$\lambda_k = c_k(\mathbb{E})$$
be the Chern classes.
Since $\lambda_g$ vanishes when restricted to $\delta_0$,
we obtain a well-defined
evaluation
$$\phi: A^*(M_{g,n}^c) \rarr \mathbb{Q}$$
given by integration
$$\phi(\gamma) = \int_{\overline{M}_{g,n}} \overline{\gamma} 
\cdot \lambda_g\ ,$$
where $\overline{\gamma}$ is any lift of $\gamma\in A^*(M_{g,n}^c)$ to
$A^*(\overline{M}_{g,n})$. A discussion of the
above evaluation and the associated Gorenstein conjecture
for the tautological ring can be found in \cite{FP,Pand}.

The evaluation $\phi$ is determined on $R^*(M_{g,n}^c)$
by
the $\lambda_g$-formula for descendent
integrals,
$$\int_{\overline{M}_{g,n}} \psi_1^{a_1}
\cdots \psi_n^{a_n} \lambda_g = \binom{2g-3+n}{a_1,\ldots,a_n} \cdot
\int_{\overline{M}_{g,1}} \psi_1^{2g-2}\lambda_g,$$
discovered in \cite{GeP} and proven in \cite{FP}.
Theorem \ref{mmmj} is much stronger.  The $\lambda_g$-formula 
is a direct {consequence} of Theorem \ref{mmmj}
 in the special case where $f$ has degree
equal to
$$\text{dim}_\com (M_{0,n}^c) = n-3.$$
Conjecture 1 may be view as an extension of
the $\lambda_g$-formula from $\mathbb{Q}$ to cycle
classes.

\subsection{Bases and Betti numbers}
Let $P(d)$ be the set of partitions of $d$, and let
$$P(d,k)\subset P(d)$$ be the set of partitions of $d$ into
at most $k$ parts. 
Let $|P(d,k)|$ be the
cardinality. To a partition{\footnote{The parts of
$\mathbf{p}$ are posititve and satisfy $p_1\geq \ldots \geq p_\ell$.}}
$$\mathbf{p}= (p_1,\ldots,p_\ell) \in P(d,k),$$
we associate a $\kappa$ monomial by
$$\kappa_{\mathbf{p}} = \kappa_{p_1} \cdots \kappa_{p_\ell} \in 
\kappa^d(M_{0,n}^c) \ .$$

\begin{Theorem} \label{htt5}
A $\mathbb{Q}$-basis of $\kappa^d(M_{0,n}^c)$ is given by
$$\{ \kappa_{\mathbf{p}} \ | \ \mathbf{p} \in P(d,n-2-d)\ \} \  .$$
\end{Theorem}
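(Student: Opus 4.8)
The plan is to establish the two halves of the basis claim separately: that the monomials $\kappa_{\mathbf p}$ with $\mathbf p\in P(d,n-2-d)$ span $\kappa^d(M_{0,n}^c)$, and that they are linearly independent. It is convenient to record that the indexing set is governed by a single numerical invariant. Writing $\mathbf p=(p_1,\dots,p_\ell)$, membership $\mathbf p\in P(d,n-2-d)$ means $\ell\le n-2-d$, equivalently
$$ w(\mathbf p) := \sum_{i=1}^{\ell}(p_i+1) = d+\ell \le n-2 . $$
I will call $w(\mathbf p)$ the \emph{weight} of $\kappa_{\mathbf p}$. Thus the assertion is that the degree-$d$ monomials of weight $\le n-2$ form a basis; in particular every monomial of weight $\ge n-1$ must be expressible through strictly lower-weight ones. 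The degenerate case $n-2-d<1$, where $P(d,n-2-d)$ is empty, is exactly the vanishing $\kappa^{>n-3}(M_{0,n}^c)=0$ recorded in (\ref{vanni}).

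For the spanning statement I would feed the relations of Theorem \ref{rrr} into the evaluation rules of Section \ref{calrul}, specialized to $g=0$. Using $m$ auxiliary light points $\widehat p$, the push-forward of $c_{k-2}(\mathbb{A}_m^*-\mathbb{B}_m)$ along $\epsilon^c\colon M_{0,n|m}^c\to M_{0,n}^c$ drops cohomological degree by $m$, so the choice $k>n$ together with $m=k-2-d$ yields, for each such $k$, a relation living in $\kappa^d(M_{0,n}^c)$. Since the number of light points bounds the number of $\kappa$ factors produced after evaluation, each relation carries a distinguished top-weight monomial. I would verify, by the same coefficient computation used to show that the coefficient of $\kappa_i$ is nonzero in Theorem \ref{ooo}, that this leading coefficient does not vanish, and that as $k$ and the partition type range over their allowed values the leading monomials exhaust all degree-$d$ monomials of weight $\ge n-1$. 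A straightening induction, ordering monomials first by weight and then by a fixed total order on partitions of $d$, then rewrites every weight-$\ge n-1$ monomial in terms of lower ones and produces the upper bound $\dim_{\Q}\kappa^d(M_{0,n}^c)\le |P(d,n-2-d)|$.

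For independence I would exploit that $\kappa^{n-3}(M_{0,n}^c)=\Q$, so that multiplication into top degree followed by the evaluation $\phi$ of Section \ref{lammm} --- which in genus $0$ is plain integration over $\overline{M}_{0,n}$, as $\lambda_0=1$ --- pairs $\kappa^d$ against classes of complementary degree $n-3-d$. Converting each $\kappa_{\mathbf p}$ to $\psi$ monomials on $\overline{M}_{0,n+\ell}$ by the push-forward rules of Section \ref{calrul}, and pairing against explicit dual tautological classes of degree $n-3-d$ evaluated by the genus-$0$ formula $\int_{\overline{M}_{0,n}}\psi_1^{a_1}\cdots\psi_n^{a_n}=\binom{n-3}{a_1,\dots,a_n}$, I would assemble a pairing matrix indexed by $P(d,n-2-d)$ and show it is nonsingular by producing a triangular pattern for the order on partitions used above. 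This yields the reverse bound $\dim_{\Q}\kappa^d(M_{0,n}^c)\ge |P(d,n-2-d)|$; in the range $d\le \lfloor n/2\rfloor-1$ the same independence is already guaranteed by Theorem \ref{ttt}, which serves as a consistency check. Combining the two bounds with the spanning set gives the asserted basis.

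The main obstacle is the spanning step, and precisely the claim that the relations of Theorem \ref{rrr} are strong enough to cut down to weight $\le n-2$ but do not overshoot. Knowing that relations merely exist is insufficient: I must match the family of relations, indexed by $k$ and by partition type, to the set of weight-$\ge n-1$ monomials in a leading-term triangular fashion, with provably nonzero leading coefficients, and must fix a monomial order for which they straighten cleanly. Controlling these leading coefficients is the technical heart of the argument; by comparison the independence step is a routine, if lengthy, intersection computation once the dual classes are chosen.
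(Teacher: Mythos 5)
Your independence half is essentially the paper's argument: the paper pairs the monomials $\kappa_{\mathbf p}$, $\mathbf p\in P(d,n-2-d)$, against explicit codimension $n-3-d$ strata $V_{\mathbf q}$ (chains of $n-d-2$ rational components carrying prescribed numbers of markings) and checks the resulting matrix is upper-triangular with nonzero diagonal. Your version is underspecified about which dual classes realize the triangular pattern, but the strategy is the same and would go through once the strata are chosen as above.

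The spanning half contains a genuine gap. You propose to rewrite every monomial of weight $\ge n-1$ using the relations of Theorem \ref{rrr} (equivalently Proposition \ref{ggttr}), via a straightening induction on leading terms. But whether those relations generate \emph{all} the $\kappa$ relations in $\kappa^*(M_{0,n}^c)$ is exactly Question \ref{bmm2} of the paper, which is left open and verified only for $n\le 12$ by computation; since the target monomials are independent, your straightening, if it worked, would answer that question affirmatively for all $n$. The explicit coefficient control in Section \ref{evrel} is carried out only for the singleton $\kappa_\ell$ (the coefficient $\alpha_\ell$ of Proposition \ref{kwk2}), and nothing in the paper gives the triangular exhaustion of all high-weight monomials that your induction requires. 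The paper avoids this entirely: since $M_{0,n}^c=\overline{M}_{0,n}$ is smooth and projective with Chow ring generated by strata classes, Poincar\'e duality gives
$$\dim_{\mathbb{Q}}\kappa^d(M_{0,n}^c)=\operatorname{rank}\left(\kappa^d\times A^{n-3-d}\rarr\mathbb{Q}\right),$$
and the combinatorial identity \eqref{ywwy} shows that any stratum class $S_\Gamma$ pairs with every $\kappa_{\mathbf p}$ exactly as the special stratum $V_{\mathbf q(\Gamma)}$ does, where $\mathbf q(\Gamma)\in P(d,n-d-2)$ records the quantities $\mathsf m(R_i)+\mathsf n(R_i)-3$. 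Hence the rank, and so the dimension, is at most $|P(d,n-d-2)|$ without producing a single explicit relation. You should replace your straightening argument with this duality argument, or else accept that the spanning step as you have framed it is an open problem rather than a routine verification.
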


For example, if $d\leq \lfloor \frac{n}{2} \rfloor$-1,
then 
$n-2-d \geq d$ and
$$P(d, n-2-d)= P(d).$$
Hence, Theorem \ref{htt5} agrees with Theorem \ref{ttt}.
The Betti number calculation,
$$\text{dim}_{\mathbb{Q}} \ \kappa^d(M_{0,n}^c) \ = \ |P(d,n-2-d)|\ ,$$
is implied by Theorem \ref{htt5}.
The proof of Theorem \ref{htt5}
is given in Section \ref{bbbn}.

The relations of Theorem 3 and variants
provide an indirect approach for
multiplication in the canonical basis of $\kappa^*(M^c_{0,n})$
determined by Theorem \ref{htt5}.

\begin{Question}
Does there exist a direct calculus for multiplication in the
canonical basis of $\kappa^*(M_{0,n}^c)$ ?
\end{Question}

\subsection{Universality}

The universality of Theorem
\ref{mmmj} expresses the higher genus structures as canonical
{\em ring} quotients,
$$\kappa^*(M_{0,2g+n}^c) \stackrel{\iota_{g,n}}{\rarr} 
\kappa^*(M^c_{g,n}) \rarr 0 \ .$$

\begin{Theorem} If $n> 0$, then $\iota_{g,n}$
is an isomorphism. \label{izz}
\end{Theorem}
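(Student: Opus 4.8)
The map $\iota_{g,n}$ is, by construction, a surjective graded $\Q$-algebra homomorphism sending each $\kappa_i$ to $\kappa_i$: it is well defined by Theorem \ref{mmmj}, and it is surjective because Theorem \ref{ooo} furnishes the \emph{same} generating set for both rings. Indeed, applying Theorem \ref{ooo} to $M^c_{0,2g+n}$ gives generators $\kappa_1,\ldots,\kappa_{\lfloor (2g+n)/2\rfloor-1}=\kappa_1,\ldots,\kappa_{g-1+\lfloor n/2\rfloor}$, which is exactly the generating set of $\kappa^*(M^c_{g,n})$. Thus the theorem reduces to the injectivity of $\iota_{g,n}$, and since $\iota_{g,n}$ is surjective it suffices to prove the degreewise lower bound $\dim_\Q \kappa^d(M^c_{g,n}) \ge |P(d,2g+n-2-d)|$; the opposite inequality is surjectivity combined with Theorem \ref{htt5} applied to $M^c_{0,2g+n}$. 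So the plan is to match Betti numbers by producing, in genus $g$, enough linearly independent images of the canonical genus $0$ basis.

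The engine for detecting classes will be the $\lambda_g$-evaluation $\phi$ of Section \ref{lammm}, fed by an extension of the $\lambda_g$-formula. Writing $c_g=\int_{\overline{M}_{g,1}}\psi_1^{2g-2}\lambda_g\neq 0$, the $\lambda_g$-formula identifies the top-degree $\lambda_g$-integral of a $\psi$-monomial on $\overline{M}_{g,n}$ with $c_g$ times the ordinary genus $0$ integral of the corresponding monomial on $\overline{M}_{0,2g+n}$ (the stated formula is precisely the purely-$\psi$ case, the extra $2g$ markings carrying $\psi^0$). Pushing $\psi$ forward to $\kappa$ via the forgetful map, the same holds for mixed $\psi$-$\kappa$ monomials placed on the $n$ shared markings. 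Granting this compatibility, for any test class $\theta$ built from $\psi$-classes on the $n$ markings and $\kappa$-classes one has $\phi\big(\iota_{g,n}(\xi)\cdot\tilde\theta\big)=c_g\int_{\overline{M}_{0,2g+n}}\xi\,\theta$, where $\tilde\theta$ is the genus $g$ counterpart of $\theta$. In other words, $\phi$ transports the genus $0$ integration pairing to genus $g$ up to the nonzero factor $c_g$.

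With this in hand the lower bound follows. The proof of Theorem \ref{htt5} exhibits, for each basis monomial $\kappa_{\mathbf p}$ with $\mathbf p\in P(d,2g+n-2-d)$, a complementary test class $\theta$ of the above form (a $\psi$-$\kappa$ monomial on the $n$ markings) detecting it under genus $0$ integration. Transporting these detecting functionals to genus $g$ via the compatibility of the previous paragraph shows that the images $\iota_{g,n}(\kappa_{\mathbf p})$ are $\Q$-linearly independent in $\kappa^d(M^c_{g,n})$, giving $\dim_\Q\kappa^d(M^c_{g,n})\ge |P(d,2g+n-2-d)|$. Combined with surjectivity this yields equality in every degree, hence $\ker\iota_{g,n}=0$ and the asserted isomorphism.

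The hard part will be the compatibility statement of the second paragraph, i.e. the extended $\lambda_g$-formula, and the requirement that the detecting classes from Theorem \ref{htt5} can be chosen of the transportable $\psi$-$\kappa$ type. The difficulty is that $\kappa^*(M^c_{0,2g+n})$ is \emph{not} Gorenstein, so the internal $\kappa$-pairing is degenerate and one is forced to use external $\psi$-insertions; a general Poincar\'e-dual detecting class would involve separating boundary divisors, whose genus $g$ images under $\lambda_g$-restriction are delicate (controlled in principle by multiplicativity of $\lambda_g$ on compact-type strata and the vanishing $\lambda_g|_{\delta_0}=0$). The hypothesis $n>0$ enters precisely here: a marked point is needed to carry the $\psi$-insertions that render the relevant pairing non-degenerate and that realize the $2g+n$ genus $0$ markings. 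For $n=0$ no such insertions are available, and the possible extra degree $g-1$ relation of Proposition \ref{vvww} shows that $\iota_{g,0}$ can genuinely fail to be injective.
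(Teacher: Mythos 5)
Your reduction is the same as the paper's: surjectivity of $\iota_{g,n}$ comes from Theorem \ref{mmmj}, the upper bound on $\dim_\Q\kappa^d(M_{g,n}^c)$ comes from Theorem \ref{htt5} applied to $M_{0,2g+n}^c$, and everything hinges on the lower bound $\dim_\Q\kappa^d(M_{g,n}^c)\geq |P(d,2g-2+n-d)|$. But your mechanism for the lower bound has a genuine gap, and you flag it yourself without closing it: you need the basis monomials of $\kappa^d(M_{0,2g+n}^c)$ to be detected by test classes that are $\psi$--$\kappa$ monomials supported on the $n$ shared markings, so that they can be ``transported'' through the extended $\lambda_g$-formula. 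The proof of Theorem \ref{htt5} does not provide such test classes: it detects the basis by pairing with boundary strata $V_{\mathbf{q}}$ (chains of rational components), which are not of your transportable type, and since $\kappa^*(M_{0,2g+n}^c)$ is not Gorenstein you cannot replace them by internal $\kappa$ classes. Nothing in the paper, and nothing in your argument, shows that the pairing of $\kappa^d(M_{0,2g+n}^c)$ against $\psi$--$\kappa$ monomials carried by only $n$ of the $2g+n$ markings has full rank $|P(d,2g+n-2-d)|$. Without that, the transport argument detects too few classes and the lower bound does not follow.

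The paper avoids this entirely by working directly in genus $g$: for each $\mathbf{p}\in P(d,2g-2+n-d)$ it builds an explicit compact-type stratum $W_{\mathbf{p}}$ out of chain and comb dual graphs ($A$, $B$, $\widetilde{B}$) whose components have positive genus, corrects the genus with a $\psi_1^{2\delta}$ insertion on the marked vertex, and shows the matrix $\int\kappa_{\mathbf{p}}\cdot\psi_1^{2\delta}[W_{\mathbf{q}}]\cdot\lambda_g$ is upper-triangular with nonzero diagonal via the evaluations $\int_{\overline{M}_h}\kappa_{2h-3}\lambda_h\neq 0$ and $\int_{\overline{M}_{h,1}}\psi_1^k\kappa_{2h-2-k}\lambda_h\neq 0$. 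If you want to salvage your route, you would have to prove the missing rank statement for $\psi$--$\kappa$ test classes in genus $0$, which is essentially as hard as the construction you are trying to avoid. One smaller inaccuracy: Proposition \ref{vvww} only permits at most one relation in degree $g-1$ and does not by itself show that $\iota_{g,0}$ fails to be injective; the actual failure is exhibited by the genus $5$ computation in Section \ref{aaan}.
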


The proof is given in Section \ref{gg3} via intersection calculations.
The quotient $\iota_{g,0}$ is not always an isomorphism. 
For example, a nontrivial kernel appears
for $\iota_{5,0}$.

\begin{Question}
What is  the  kernel of $\iota_{g,0}$ ?
\end{Question}

Universality appears to be special to the moduli of
compact type curves.
No similar phenomena have been found for $M_g$ or $\overline{M}_g$.

\subsection{Acknowledgments}
Theorem \ref{rrr} was motivated by the study of stable quotients
developed in
\cite{MOP}. Discussions with A. Marian and D. Oprea 
were very helpful.   
Easy exploration of the
relations of Theorem \ref{rrr} was made possible
by code written by C. Faber. Conversation with C. Faber
played an important role.

The author was partially supported by NSF grant
DMS-0500187
 and the Clay institute.
 The research reported here was undertaken
while the author was visiting  MSRI in Berkeley 
and the 
Instituto
Superior T\'ecnico in Lisbon in the spring of
2009.

\section{Stable quotients}
\label{sq}

\subsection{Stability}
Relations in $\kappa(M_{g,n}^c)$ will be obtained
from the virtual geometry of the moduli space of
stable quotients $\overline{Q}_{g,n}(\proj^1,d)$.
We start by reviewing basic definitions
and results of \cite{MOP}.

Let $C$ be a curve{\footnote{All curves here 
are reduced and connected
with at worst nodal singularities.}}
with distinct markings
$p_1,\ldots, p_n$ in the nonsingular locus $C^{ns}$.
Let $q$ be a quotient of the rank $N$ trivial bundle
 $C$,
\begin{equation*}
\com^N \otimes \oh_C \stackrel{q}{\rarr} Q \rarr 0.
\end{equation*}
If the torsion subsheaf $\tau(Q)\subset Q$ 
has support contained in $$C^{ns}\setminus\{p_1,\ldots,p_n\},$$
 then
$q$ is a {\em quasi-stable quotient}. 
 Quasi-stability of $q$ implies the associated
kernel,
\begin{equation*}
0 \rightarrow S \rightarrow
\com^N \otimes \oh_C \stackrel{q}{\rarr} Q \rarr 0,
\end{equation*}
is a locally free sheaf on $C$. Let $r$ 
denote the rank of $S$.

Let $(C,p_1,\ldots,p_n)$ be a pointed curve equipped
with a quasi-stable quotient $q$.
The data $(C,p_1,\ldots,p_n,q)$ determine 
a {\em stable quotient} if
the $\mathbb{Q}$-line bundle 
\begin{equation}\label{aam}
\omega_C(p_1+\ldots+p_n)
\otimes (\wedge^{r} S^*)^{\otimes \epsilon}
\end{equation}
is ample 
on $C$ for every strictly positive $\epsilon\in \mathbb{Q}$.
Quotient stability implies
$2g-2+n \geq 0$.

Viewed in concrete terms, no amount of positivity of
$S^*$ can stabilize a genus 0 component 
$$\proj^1\stackrel{\sim}{=}P \subset C$$
unless $P$ contains at least 2 nodes or markings.
If $P$ contains exactly 2 nodes or markings,
then $S^*$ {\em must} have positive degree.

A stable quotient $(C,p_1,\ldots,p_n,q)$
yields a rational map from the underlying curve
$C$ to the Grassmannian $\mathbb{G}(r,N)$.
 We will
only require the ${\mathbb{G}}(1,2)=\proj^1$ case
for the proof Theorem \ref{rrr}.

\subsection{Isomorphism}
Let $(C,p_1,\ldots,p_n)$ be a pointed curve.
Two quasi-stable quotients
\begin{equation}\label{fpp22}
\com^N \otimes \oh_C \stackrel{q}{\rarr} Q \rarr 0,\ \ \
\com^N \otimes \oh_C \stackrel{q'}{\rarr} Q' \rarr 0
\end{equation}
on $C$ 
are {\em strongly isomorphic} if
the associated kernels 
$$S,S'\subset \com^N \otimes \oh_C$$
are equal.

An {\em isomorphism} of quasi-stable quotients
 $$\phi:(C,p_1,\ldots,p_n,q)\rarr
(C',p'_1,\ldots,p'_n,q')
$$ is
an isomorphism of curves
$$\phi: C \stackrel{\sim}{\rarr} C'$$
satisfying
\begin{enumerate}
\item[(i)] $\phi(p_i)=p'_i$ for $1\leq i \leq n$,
\item[(ii)] the quotients $q$ and $\phi^*(q')$ 
are strongly isomorphic.
\end{enumerate}
Quasi-stable quotients \eqref{fpp22} on the same
curve $C$
may be isomorphic without being strongly isomorphic.

The following result is proven in \cite{MOP} by
Quot scheme methods from the perspective
of geometry relative to a divisor.

\begin{Theorem} The moduli space of stable quotients 
$\overline{Q}_{g,n}({\mathbb{G}}(r,N),d)$ parameterizing the
data
$$(C,\ p_1,\ldots, p_n,\  0\rarr S \rarr
\com^N\otimes \oh_C \stackrel{q}{\rarr} Q \rarr 0),$$
with {\em rank}$(S)=r$ and {\em deg}$(S)=-d$,
is a separated and proper Deligne-Mumford stack of finite type
over $\com$.
\end{Theorem}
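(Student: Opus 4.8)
The plan is to realize $\overline{Q}_{g,n}(\mathbb{G}(r,N),d)$ as an open substack of a relative Quot construction over the Artin stack of nodal pointed curves, and then to check finite type, the Deligne--Mumford property, separatedness, and properness in turn; the last is by far the most delicate. Let $\mathfrak{M}_{g,n}$ denote the smooth Artin stack of prestable (at worst nodal) $n$-pointed genus $g$ curves, with universal curve $\pi\colon \mathcal{C}\rarr \mathfrak{M}_{g,n}$. Grothendieck's relative Quot construction applied to $\com^N\otimes\oh_{\mathcal{C}}$ produces an algebraic stack over $\mathfrak{M}_{g,n}$ whose fibre over $[C]$ is the classical Quot scheme of rank-$r$, degree-$(-d)$ subsheaves $S\subset\com^N\otimes\oh_C$. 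Both quasi-stability (torsion of $Q$ avoiding nodes and markings) and stability (ampleness of $\omega_C(p_1+\ldots+p_n)\otimes(\wedge^{r} S^*)^{\otimes\epsilon}$ for all positive $\epsilon$) are open conditions, so they cut out an open substack, which is the desired moduli space.

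Finite type over $\com$ follows once $d$ and $g$ are fixed. On a fixed curve the subsheaves of fixed rank and degree form a bounded family by Grothendieck's theory. The stability condition forces every genus-$0$ component carrying fewer than three special points to have $\deg(\wedge^{r} S^*)>0$ --- the concrete observation recorded just before the statement --- so the fixed degree budget $d$ permits only finitely many topological types of domain, and boundedness across these types gives finite type. For the Deligne--Mumford property I would verify that the diagonal is unramified, equivalently that a stable quotient admits no infinitesimal automorphisms: such an automorphism is an automorphism of $C$ fixing the markings and preserving the subsheaf $S$ in the strong sense, and the positivity of $\wedge^{r} S^*$ on every otherwise-unstabilized component rigidifies it, so automorphism groups are finite and reduced.

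Separatedness I would obtain from the valuative criterion over a discrete valuation ring $R$ with fraction field $K$. Given two families over $R$ that agree over $K$, the ampleness condition pins down the special-fibre curve uniquely, by a stable-reduction-type uniqueness; the subsheaf is then unique as well, since the Quot scheme is separated --- indeed projective --- over the now-fixed family of curves. Hence the two families coincide.

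The main obstacle is properness, that is, the existence of limits. Beginning with a stable quotient over $K$, I would first pass to a nodal model of the pointed curve over $R$ after a finite base change by stable reduction. The generic subsheaf then defines a section of the relative Quot scheme over the punctured disc, which extends across the special fibre by properness of the Quot scheme; the difficulty is that the limit $S_0$ may violate quasi-stability, its quotient acquiring torsion at a node, or may violate stability. This is exactly where the perspective of geometry relative to the divisor of nodes and markings enters: one absorbs the degenerate behaviour by inserting rational bridges at the offending points and redistributing the degree of $S^*$, then re-extends the subsheaf on the modified curve. The technical heart of the argument is to show that this modification terminates in a \emph{unique} stable quotient --- one must track how degree and torsion redistribute under bubbling and verify that the ampleness condition selects a single stable representative --- so that after finitely many steps a stable quotient over all of $\mathrm{Spec}(R)$ is produced, completing the verification of the valuative criterion.
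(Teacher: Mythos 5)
The paper does not prove this statement at all: it is imported verbatim from \cite{MOP}, and the only indication given of the argument is the one-line remark that it is proven ``by Quot scheme methods from the perspective of geometry relative to a divisor.'' So there is no in-paper proof to compare against. Your outline is consistent with that advertised strategy --- relative Quot over the Artin stack $\mathfrak{M}_{g,n}$, openness of quasi-stability and stability, boundedness from the degree budget on destabilized rational components, finiteness of automorphisms from the positivity of $\wedge^r S^*$, and a valuative-criterion argument with bubbling at the points where the limiting quotient acquires torsion.

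That said, as a proof rather than a plan it has a genuine gap, and you have located it yourself: the entire content of properness is the claim that the modification process (inserting rational bridges at nodes and markings where $Q_0$ acquires torsion, redistributing the degree of $S^*$, re-extending, and contracting what has become unstable) terminates and produces a limit, and you defer exactly this step. One must actually exhibit the modified family --- typically by blowing up the total space of the semistable model at the offending points and analyzing how the saturation of the extended subsheaf changes degree on the exceptional curves --- and check that only finitely many bubbles are needed because each one absorbs a positive amount of the degree $d$. A second, smaller issue is separatedness: the underlying pointed curve $(C,p_1,\ldots,p_n)$ of a stable quotient need not itself be a stable pointed curve (its rational components with two special points are stabilized only by $\deg S^*>0$), so you cannot invoke uniqueness of pointed stable reduction to ``pin down the special-fibre curve'' before comparing subsheaves. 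The standard fix is to dominate both candidate limits by a common semistable model, use separatedness of the Quot scheme over that fixed family to identify the pulled-back subsheaves, and only then argue that the two special fibres arise by contracting the same components; uniqueness of the curve and of the quotient must be established together, not in sequence.
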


\subsection{Structures}\label{strrr}
Over the moduli space of stable quotients, there is a universal
curve
\begin{equation}\label{ggtt}
\pi: U \rarr \overline{Q}_{g,n}({\mathbb{G}}(r,N),d)
\end{equation}
with $n$ sections and a universal quotient
$$0 \rarr S_U \rarr \com^N \otimes \oh_U \stackrel{q_U}{\rarr} Q_U \rarr 0.$$
The subsheaf $S_U$ is locally 
free on $U$ because of the restrictions imposed on the torsion by the
stability condition.

The moduli space $\overline{Q}_{g,n}({\mathbb{G}}(r,N),d)$ is equipped
with two basic types of maps.
If $2g-2+n >0$, then the stabilization of $(C,p_1,\ldots,p_m)$
determines a map
$$\nu:\overline{Q}_{g,n}({\mathbb{G}}(r,N),d) \rightarrow \overline{M}_{g,n}$$
by forgetting the quotient.
For each marking $p_i$, the quotient is locally free over $p_i$, 
and hence determines
an evaluation map
$$\text{ev}_i: 
\overline{Q}_{g,n}({\mathbb{G}}(r,N),d) \rightarrow {\mathbb{G}}(r,N).$$

The general linear group $\mathbf{GL}_N(\com)$ acts on
$\overline{Q}_{g,n}({\mathbb{G}}(r,N),d)$ via 
the standard
action on $\com^N \otimes \oh_C$. The structures
$\pi$, $q_U$,
$\nu$ and the evaluations maps are all $\mathbf{GL}_N(\com)$-equivariant.

\subsection{Obstruction theory}
The moduli of stable
quotients 
 maps 
to the Artin stack of pointed domain curves
$$\nu^A:
\overline{Q}_{g,n}({\mathbb{G}}(r,N),d) \rightarrow {\mathcal{M}}_{g,n}.$$
The moduli  of stable quotients with fixed underlying
curve 
$$(C,p_1,\ldots,p_n) \in {\mathcal{M}}_{g,n}$$
 is simply
an open set of the Quot scheme. 
The following result of \cite{MOP} is obtained from the
standard deformation theory of the Quot scheme.

\begin{Theorem}\label{htr}
The deformation theory of the Quot scheme 
determines a 2-term obstruction theory on
$\overline{Q}_{g,n}({\mathbb{G}}(r,N),d)$ relative to
$\nu^A$
given by ${{RHom}}(S,Q)$.
\end{Theorem}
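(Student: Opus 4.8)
The plan is to verify the Behrend--Fantechi axioms for a relative perfect obstruction theory, reducing first to the deformation theory of a single Quot scheme. Fix a pointed curve $(C,p_1,\ldots,p_n)\in\mathcal{M}_{g,n}$. By construction the fiber of $\nu^A$ over this point is the open locus of stable quotients inside the Quot scheme parametrizing surjections $\com^N\otimes\oh_C\rarr Q$ with $\operatorname{rk}(S)=r$ and $\deg(S)=-d$. Grothendieck's deformation theory of the Quot scheme then computes the Zariski tangent space at a point $[0\rarr S\rarr\com^N\otimes\oh_C\rarr Q\rarr 0]$ as $\operatorname{Hom}_C(S,Q)$, and identifies the obstruction to extending an infinitesimal deformation as a class in $\operatorname{Ext}^1_C(S,Q)$. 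The standard argument deforms the subsheaf $S\subset\com^N\otimes\oh_C$: first-order deformations correspond to maps $S\rarr Q$, and the obstruction to lifting over a square-zero extension is the image of a canonical class in $\operatorname{Ext}^1_C(S,Q)$.

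To globalize, I would work with the universal curve $\pi:U\rarr\overline{Q}_{g,n}(\mathbb{G}(r,N),d)$ and the universal sequence $0\rarr S_U\rarr\com^N\otimes\oh_U\rarr Q_U\rarr 0$. Since $S_U$ is locally free on $U$, one has $R\mathcal{H}om(S_U,Q_U)=S_U^\vee\otimes Q_U$ as a sheaf, and the candidate relative obstruction theory is the dual complex
\[
E^\bullet=\big(R\pi_*\,R\mathcal{H}om(S_U,Q_U)\big)^\vee .
\]
The essential point that this complex has only two terms, i.e.\ is perfect of amplitude $[-1,0]$, follows because the fibers of $\pi$ are curves: $R^i\pi_*$ vanishes for $i\geq 2$, so $R\pi_*(S_U^\vee\otimes Q_U)$ is concentrated in degrees $0$ and $1$. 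By cohomology and base change the fiberwise cohomologies recover exactly $\operatorname{Hom}_C(S,Q)$ and $\operatorname{Ext}^1_C(S,Q)$, matching the pointwise tangent and obstruction spaces of the first step.

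It remains to produce a canonical morphism $\phi:E^\bullet\rarr L_{\nu^A}$ to the cotangent complex of $\nu^A$ and to check the two obstruction-theory axioms, namely that $h^0(\phi)$ is an isomorphism and $h^{-1}(\phi)$ is surjective. The map $\phi$ is built in the usual way from the Atiyah class of the universal subsheaf $S_U$, which supplies the comparison between deformations of the quotient and deformations classified by $L_{\nu^A}$; working relative to $\nu^A$ is precisely what isolates the deformations of the quotient from those of the underlying curve, the latter being already accounted for by the target Artin stack $\mathcal{M}_{g,n}$. I expect the main technical obstacle to be this last step: constructing $\phi$ canonically and verifying the axioms, which amounts to checking that the truncated relative cotangent complex of the Quot functor agrees with $R\pi_*\,R\mathcal{H}om(S_U,Q_U)$ in the correct degrees and compatibly with the obstruction assignment above. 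Given the local identification already in hand, this is the standard verification for Quot-type moduli and presents no essential new difficulty beyond careful bookkeeping of the torsion in $Q_U$, which is harmless since it is supported in the relative smooth locus away from the markings.
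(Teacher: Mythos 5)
The paper does not prove this statement: it is quoted directly from \cite{MOP} with only the remark that it follows from the standard deformation theory of the Quot scheme, and the proof lives in that reference. Your outline --- identifying the fibers of $\nu^A$ as open subsets of Quot schemes, taking $\operatorname{Hom}_C(S,Q)$ and $\operatorname{Ext}^1_C(S,Q)$ as tangent and obstruction spaces via Grothendieck's theory, globalizing to the two-term complex $\big(R\pi_*\,R\mathcal{H}om(S_U,Q_U)\big)^\vee$ (perfect of amplitude $[-1,0]$ because $Q_U$ is flat over the base and the fibers of $\pi$ are curves), and comparing with $L_{\nu^A}$ via the Atiyah class --- is exactly that standard argument, so it is consistent with the route the paper points to; it remains a sketch in the final step, but that is precisely the verification carried out in \cite{MOP}.
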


An absolute 2-term obstruction theory on
$\overline{Q}_{g,n}({\mathbb{G}}(r,N),d)$ is
obtained from Theorem \ref{htr} and the smoothness
of $\mathcal{M}_{g,n}$, see \cite{BF,GP}. The
 analogue of Theorem \ref{htr} for the Quot scheme of a {\it fixed} nonsingular
 curve was observed in \cite {MO}.

The $\mathbf{GL}_N(\com)$-action lifts to the
obstruction theory,
and the resulting virtual class is
defined in $\mathbf{GL}_N(\com)$-equivariant cycle theory,
$$[\overline{Q}_{g,n}({\mathbb{G}}(r,N),d)]^{vir} 
\in A_*^{\mathbf{GL}_N(\com)}
(\overline{Q}_{g,n}({\mathbb{G}}(r,N),d)).$$

\section{Construction of the relations} \label{pr3}
\subsection{$\com^*$-equivariant geometry}
Let $\com^*$ act on $\com^2$ with weights $[0,1]$ on the
respective basis elements.
Let 
$$\proj^1 = \proj(\com^2),$$
and let $0,\infty \in \proj^1$ be the $\com^*$-fixed
points corresponding the eigenspaces of weight 0 and 1
respectively.

There is an induced $\com^*$-action on
$\overline{Q}_{g,n}(\proj^1,d)$.
Since the virtual dimension of $\overline{Q}_{g,n}(\proj^1,d)$ is
$2g-2+2d+n$,
$$[\overline{Q}_{g,n}(\proj^1,d)]^{vir} 
\in A_{2g-2+2d+n}^{\com^*}
(\overline{Q}_{g,n}(\proj^1,d)),$$
see \cite{MOP}.
The $\com^*$-action lifts canonically{\footnote{The particular
$\com^*$-lift to $S_U$ plays an important role
in the calculation.}} 
 to
the universal curve 
$$\pi: U\rarr \overline{Q}_{g,n}(\proj^1,d).$$
and to the universal subsheaf $S_U$.
The higher direct image
$R^1\pi_*(S_U)$ is a vector bundle of rank
$g+d-1$ 
with top Chern class 
$$\mathsf{e}(R^1\pi_*(S_U)) \in 
A^{g+d-1}_{\com^*}(\overline{Q}_{g,n}(\proj^1,d)).$$

\subsection{Relations}
The relations of Theorem \ref{rrr} will be obtained
by  studying  the   class 
$$\Phi_{g,n,d} = \left( \mathsf{e}(R^1\pi_*(S_U)) \ \cup 
\
\prod_{i=1}^n \text{ev}_i^*([\infty]) \right)\ 
\cap \ [\overline{Q}_{g,n}(\proj^1,d)]^{vir} \ .
$$
on the moduli space of
stable quotients. 
A dimension calculation shows
$$\Phi_{g,n,d} \in 
 A_{g-1+d}^{\com^*}
(\overline{Q}_{g,n}(\proj^1,d))\ .$$

Let $2g-2+n>0$, and consider the proper morphism
$$\nu: \overline{Q}_{g,n}(\proj^1,d) \rarr \overline{M}_{g,n}.$$
Let $[1]$ denote the trivial bundle with $\com^*$-weight $1$, and
let $\mathsf{e}([1])$ be the $\com^*$-equivariant first 
Chern class.
The class
\begin{equation}\label{gret}
\nu_*\left(\Phi_{g,n,d}\ \mathsf{e}([1])^k\right) \in A_{g-1+d-k}(\overline{M}_{g,n})
\end{equation}
certainly vanishes in the non-equivariant limit for $k>0$.

We will calculate the push-forward \eqref{gret}
 via $\com^*$-localization
to find relations. 
Theorem \ref{rrr} will be obtained after restriction to
the moduli space 
$$M_{g,n}^c \subset \overline{M}_{g,n}$$
of curves of compact type.

\subsection{$\com^*$-fixed loci}
Since
$\Phi_{g,n,d}\ \mathsf{e}([1])^k $ is a $\com^*$-equivariant 
class, we may calculate the non-equivariant limit of the 
push-forward \eqref{gret} 
by the virtual localization formula \cite{GP}
as applied
in \cite{MOP}. 
We will be interested in the restriction of 
$\nu_*\left( \Phi_{g,n,d}\ \mathsf{e}([1])^k \right)$ to $M_{g,n}^c$.

The first step
is to determine the $\com^*$-fixed loci of $\overline{Q}_{g,n}(\proj^1,d)$.
The full list of $\com^*$-fixed loci is 
indexed by decorated graphs
described in \cite{MOP}. However, we will see
 most loci do {not}
contribute to the localization calculation of
$$\nu_*\left(\Phi_{g,n,d}\ \mathsf{e}([1])^k\right)|_{M_{g,n}^c}$$
by our
specific choices of $\com^*$-lifts.

The {\em principal} component of the $\com^*$-fixed point locus 
$$\overline{Q}_{g,n}(\proj^1,d)^{\com^*}\subset
\overline{Q}_{g,n}(\proj^1,d)$$ is defined as follows.
Consider
\begin{equation}\label{nrr}
\overline{M}_{g,n|d}\ / \ S_d
\end{equation}
where the symmetric group acts by permutation of
the $d$ nonstandard markings.
Given an element
$$[C, p_1,\ldots,p_n,
\widehat{p}_1,
\ldots,\widehat{p}_d] \in \overline{M}_{g,n|d}\ , $$
there is a canonically associated sequence
\begin{equation}\label{jwq2}
0 \rarr \oh_C(-\sum_{j=1}^d \widehat{p}_j) \rarr \oh_C \rarr Q \rarr 0.
\end{equation}
By including $\oh_C$ as the {\em second} factor of
$\com^2 \otimes \oh_C$, we obtain a stable quotient from
\eqref{jwq2}. The corresponding  $S_d$-invariant morphism
$$\iota: \overline{M}_{g,n|d} \rarr 
\overline{Q}_{g,n}(\proj^1,d)$$
surjects
onto the principal component of $\overline{Q}_{g,n}(\proj^1,d)^{\com^*}$.

Let $F\subset  \overline{Q}_{g,n}(\proj^1,d)^{\com^*}$
be a component of the $\com^*$-fixed locus, and let
$[C,p_1,\ldots,p_n,q]\in F$ be
a generic element of $F$:  
\begin{enumerate}
\item[(i)]
If an irreducible component of $C$ lying over
$0\in \proj^1$ has genus $h>0$, then 
$\mathsf{e}(R^1\pi_*(S_U))$ yields the class $\lambda_h$
by the contribution formulas of \cite{MOP}.
Since 
$$\lambda_h|_{M_{h,*}^c} = 0$$
by \cite{VdG}, such loci $F$ have vanishing contribution
to 
$$\nu_*\left(\Phi_{g,n,d}\ \mathsf{e}([1])^k\right)|_{M_{g,n}^c}\ .$$
\item[(ii)]
If an  irreducible component of $C$ lying over
$0\in \proj^1$ 
is incident to more than a single irreducible
component dominating
$\proj^1$, then $\mathsf{e}(R^1\pi_*(S_U))$ vanishes
on $F$ by the $0$ weight space in $\com^2$ associated to $0\in \proj^1$. 
\item[(iii)]
If  $p_i\in C$ lies over
$0\in \proj^1$, 
then $\text{ev}_i^*([\infty])$ vanishes
on $F$. 
\end{enumerate}
By the  vanishings (i-iii) together with the
stability conditions, we conclude the
principal locus \eqref{nrr} is the
{\em only} $\com^*$-fixed component
of $\overline{Q}_{g,n}(\proj^1,d)$
which contributes to
$\nu_*\left(\Phi_{g,n,d}\ \mathsf{e}([1])^k\right)|_{M_{g,n}^c}$.

\subsection{Proof of Theorem \ref{rrr}}
The contribution of the principal component of
$\overline{Q}_{g,n}(\proj^1,d)$ to the push-forward
$\nu_*\left(\Phi_{g,n,d}\ \mathsf{e}([1])^k\right)|_{M_{g,n}^c}$
is
obtained from the
 localization 
formulas of \cite{MOP} together with an
analysis of $\mathsf{e}(R^1\pi_*(S_U))$.

For
 $[C, p_1,\ldots,p_n,
\widehat{p}_1,
\ldots,\widehat{p}_d] \in \overline{M}_{g,n|d}$, the
long exact sequence associated to \eqref{jwq2} yields
$$0 \rarr \com \otimes \oh_C \rarr \oh_{\widehat{p}_1+\ldots +\widehat{p}_d}
\rarr H^1(C,S) \rarr H^1(C,\oh_C) \rarr 0\ .$$
We conclude
$$\mathsf{e}(R^1\pi_*(S_U)) = \frac{\mathsf{e}(\mathbb{E}^*\otimes[1]) \
\mathsf{e}(\mathbb{A}_d \otimes [1])}{\mathsf{e}([1])}$$
on the principal component. 
The evaluation
$$
 \prod_{i=1}^n \text{ev}_i^*([\infty]) \cdot \mathsf{e}([1])^k
 = \mathsf{e}([-1])^n \cdot
\mathsf{e}([1])^{k}$$
is immediate.

By \cite{MOP},
the full localization contribution of the principal component
is therefore
$$
\frac{\mathsf{e}(\mathbb{E}^*\otimes[1])\ 
\mathsf{e}(\mathbb{A}_d \otimes [1])}{\mathsf{e}([1])}
\mathsf{e}([-1])^n 
\mathsf{e}([1])^{k}
\cdot
\frac{{\mathsf{e}(\mathbb{E}^* \otimes [-1])}}{\mathsf{e}([-1])} 
\frac{1}{\mathsf{e}(\mathbb{B}_d\otimes[-1])}\ .
$$
Using the Mumford relation $c(\mathbb{E})\cdot c(\mathbb{E}^*)=1$, we
conclude, in the non-equivariant limit,
$$\nu_*\left(\Phi_{g,n,d}\ \mathsf{e}([1])^k\right)|_{M_{g,n}^c}
=
(-1)^{3g-3+d+n+k}\
\epsilon^c_*\left( c_{2g-2+k}(\mathbb{A}_d^*-\mathbb{B}_d)\right)\ .$$
Since the non-equivariant limit of
$\nu_*\left(\Phi_{g,n,d}\ \mathsf{e}([1])^k\right)|_{M_{g,n}^c}$
vanishes, the
proof of Theorem \ref{rrr} is complete.
\qed

\subsection{Evaluation rules}
\label{calrul}

\subsubsection{Chern classes}\label{ddd}
Associated to each nonstandard marking $\widehat{p}_j$, there is 
cotangent line bundle
$$\widehat{\mathbb{L}}_j \rarr M_{g,n|d}^c\ .$$
Let $\widehat{\psi}_j=c_1(\widehat{\mathbb{L}}_j)$ be the
first Chern class.

The nonstandard markings are allowed by the stability conditions to be coincident.
The {\em diagonal} $$D_{ij} \subset M_{g,n|d}^c$$ is defined to be
the locus where $\widehat{p}_i=\widehat{p}_j$.
Let
$$S_{ij} =\{ \ \ell \ | \ \ell \neq i,j \ \}\ \cup\ \{ \star \}\ .$$ 
The basic isomorphism
$$D_{ij} \stackrel{\sim}{=} M_{g,n|S_{ij}}^c\ .$$
gives the diagonal geometry a recursive structure compatible 
with the cotangent line classes,
$$\widehat{\psi}_\ell|_{D_{ij}} = \widehat{\psi}_\ell\ ,$$
$$\widehat{\psi}_i|_{D_{ij}} = 
\widehat{\psi}_j|_{D_{ij}} = \widehat{\psi}_\star\ .$$
The intersection of distinct diagonals leads to smaller diagonals
$$D_{ij} \cap D_{jk} = D_{ijk}$$
in the obvious sense.
The self-intersection is determined by
\begin{equation}\label{kw3}
[D_{ij}]^2 = -\widehat{\psi}_\star |_{D_{ij}}\ .
\end{equation}
For convenience, let
$$\Delta_i = D_{1,i}+ D_{2,i} + \ldots + D_{i-1,i}$$
with the convention $\Delta_{1}=0$.

The Chern classes of $\mathbb{A}_d$ and $\mathbb{B}_d$
are easily obtained inductively from the sequences
$$0 \rarr \oh_{\sigma_1+\ldots+\sigma_{d-1}}(-\sigma_d) \rarr
           \oh_{\sigma} \rarr \oh_{\sigma_d} \rarr 0,$$
$$0 \rarr \oh_{\sigma_1+\ldots+\sigma_{d-1}}(\sigma_1+\ldots+\sigma_{d-1})
\rarr 
          \oh_{\sigma}(\sigma) \rarr 
\oh_{\sigma_d}(\sigma) \rarr 0$$
on the universal curve $U$ over $M_{g,n}^c$. We find
\begin{eqnarray}
c(\mathbb{A}_d) &= &\prod_{j=1}^d (1-\Delta_i), \label{frrd}\\
c(\mathbb{B}_d) & =& \prod_{j=1}^d (1-\widehat{\psi}_j+ \Delta_i)\ , \nonumber
\end{eqnarray}
see \cite{MOP} for similar calculations.

\subsubsection{Push-forward}

From the Chern class formulas \eqref{frrd} and the diagonal intersection
rules of Section \ref{ddd}, 
$$\epsilon^c_*( c_{2g-2+k}(\mathbb{A}_d - \mathbb{B}_d)) \in A^*(M_{g,n}^c)$$
is canonically a sum of push-forwards of the type
$$\epsilon^c_*\left(\widehat{\psi}_1^{j_1+1}\cdots 
\widehat{\psi}_s^{j_s+1}\right)\in  A^*(M_{g,n}^c)$$ 
along the forgetful maps
$$\epsilon^c_*: M_{g,n|s}^c \rarr M_{g,n}^c$$
associated to the various diagonals.

\begin{Lemma} \label{nnn}
$\epsilon_*\left(\widehat{\psi}_1^{j_1+1}\cdots 
\widehat{\psi}_s^{j_s+1}\right) = \kappa_{j_1}\cdots
\kappa_{j_d}$
 in $A^*({M}^c_{g,n})$.
\end{Lemma}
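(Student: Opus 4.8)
The plan is to prove the formula by induction on the number $s$ of nonstandard markings, realizing the forgetful map as an iterated universal curve and tracking how the cotangent line classes compare under it. Write the total map as the composition
\[
\epsilon^c\colon M_{g,n|s}^c \stackrel{\rho}{\rarr} M_{g,n|s-1}^c \stackrel{\epsilon^c_{s-1}}{\rarr} M_{g,n}^c,
\]
where $\rho$ forgets the last nonstandard point $\widehat{p}_s$. The base case $s=1$ is exactly the definition of the $\kappa$ classes, since $M_{g,n|1}^c$ is the (compact type) universal curve over $M_{g,n}^c$ with $\widehat{\psi}_1=\psi_{n+1}$.

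First I would identify $\rho$ with the universal curve over $M_{g,n|s-1}^c$. Concretely, the fibre of $\rho$ over a moduli point is the underlying curve $C$ itself: the light point $\widehat{p}_s$ may sit at any smooth point, may collide freely with the other light markings $\widehat{p}_1,\ldots,\widehat{p}_{s-1}$, and is forced to sprout a rational bubble only when it meets one of the standard markings $p_k$. Equivalently, there is a fibre square
\[
M_{g,n|s}^c \;=\; M_{g,n|s-1}^c \times_{M_{g,n}^c} M_{g,n|1}^c ,
\]
exhibiting $M_{g,n|s}^c$ as the pullback of the universal curve $M_{g,n|1}^c\rarr M_{g,n}^c$ along $\epsilon^c_{s-1}$. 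Under this identification $\widehat{p}_s$ is the universal point and $\widehat{\psi}_s$ is pulled back from the $\psi$ class of $M_{g,n|1}^c$. Hence, by base change together with the case $s=1$,
\[
\rho_*\!\left(\widehat{\psi}_s^{\,j_s+1}\right) \;=\; (\epsilon^c_{s-1})^*\,\kappa_{j_s}\ \in A^*(M_{g,n|s-1}^c).
\]

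The second ingredient is the comparison of the remaining cotangent lines, namely the claim $\widehat{\psi}_i=\rho^*\widehat{\psi}_i$ for $1\le i\le s-1$, \emph{with no boundary correction}. This is the point that distinguishes the nonstandard markings from ordinary ones: in the usual universal curve a collision $\widehat{p}_i=\widehat{p}_s$ would create a bubble and contribute a diagonal term $D_{i,s}$ to $\widehat{\psi}_i$, but here the light points are permitted to coincide, so no bubble forms along $D_{i,s}$ and the cotangent line at $\widehat{p}_i$ is literally the pullback from the base (bubbling occurs only along the standard sections, which do not involve the $\widehat{p}_i$). Granting this, the projection formula gives
\[
\rho_*\!\left(\widehat{\psi}_1^{\,j_1+1}\cdots\widehat{\psi}_s^{\,j_s+1}\right)
\;=\; \widehat{\psi}_1^{\,j_1+1}\cdots\widehat{\psi}_{s-1}^{\,j_{s-1}+1}\cdot (\epsilon^c_{s-1})^*\kappa_{j_s},
\]
and pushing forward by $\epsilon^c_{s-1}$, extracting $\kappa_{j_s}$ again by the projection formula, and invoking the inductive hypothesis for $s-1$ markings produces $\kappa_{j_1}\cdots\kappa_{j_{s-1}}\cdot\kappa_{j_s}$, completing the induction.

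The main obstacle is the vanishing of the diagonal correction in $\widehat{\psi}_i=\rho^*\widehat{\psi}_i$, i.e. verifying rigorously that no bubble is created when $\widehat{p}_s$ crosses another light point. This is precisely where the weighted stability of $\overline{M}_{g,n|d}$ enters, and it is the feature responsible for the clean product formula, free of the correction terms that arise when one forgets ordinary markings. I would settle it by a local analysis of the weighted ampleness condition defining $\overline{M}_{g,n|d}$ near a coincidence $\widehat{p}_i=\widehat{p}_s$, or equivalently by reading it off the diagonal self-intersection rule \eqref{kw3}; everything else is formal manipulation with base change and the projection formula.
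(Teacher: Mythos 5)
Your strategy is essentially the paper's --- reduce to the fiber power of the universal curve over $M^c_{g,n}$ and finish by base change plus the definition of the $\kappa$ classes --- but the step you present as an identity, the fibre square $M^c_{g,n|s} = M^c_{g,n|s-1}\times_{M^c_{g,n}} M^c_{g,n|1}$, is false, and this is precisely the point the paper's proof is built to handle. The natural map $\gamma$ from $M^c_{g,n|s}$ to that fibre product exists and is proper and birational, but it is not an isomorphism: it has positive-dimensional fibres. For instance, over a point of $M^c_{g,n|s-1}$ whose curve carries a rational bubble $P$ attached at a heavy marking $p_k$ and containing $\widehat{p}_1$, the fibre of $\rho$ contains a one-parameter family of positions for $\widehat{p}_s$ on $P$ (the cross-ratio of the node, $p_k$, $\widehat{p}_1$, $\widehat{p}_s$ varies), whereas the fibre of the pulled-back universal curve is the stable model $C_0$, on which ``$\widehat{p}_s$ has collided with $p_k$'' is a single rigid configuration. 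The same discrepancy occurs along rational bridges carrying several light points. So the fibre of $\rho$ is the curve $C$ with all its bubbles, not the pullback of the universal curve, and ``by base change'' is not available as stated.

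The argument is salvageable by exactly the observation the paper makes: $\gamma$ is a \emph{small} resolution, with exceptional locus of codimension at least $2$ (the problematic configurations require two independent collisions), so $\gamma_*\gamma^*=\mathrm{id}$, the class $\widehat{\psi}_s$ agrees with the pullback of $\psi_{n+1}$ through $\gamma$, and the push-forward $\rho_*=\mathrm{pr}_{1*}\gamma_*$ is then computed by genuine base change on the honest cartesian square of universal curves. (The paper does this in one step with the $s$-fold fibre power rather than inductively; that difference is cosmetic.) Your other key claim, that $\widehat{\psi}_i=\rho^*\widehat{\psi}_i$ for $i<s$ with no diagonal correction, is correct and for the right reason: the only components contracted by forgetting $\widehat{p}_s$ are rational bridges whose sole light point is $\widehat{p}_s$, and these never contain $\widehat{p}_i$; collisions among light points create no bubbles. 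But as written the proof rests on the cartesian square, and that is a genuine gap.
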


\bpf
There are forgetful maps
$$\gamma_j: {M}^c_{g,n|s} \rarr M^c_{g,n|1} = {M}^c_{g,n+1},$$
associated to each nonstandard marking where
the  isomorphism on the right follows from the definition stability.
Taking the fiber product over ${M}^c_{g,n}$ of all the $\gamma_j$
yields a birational morphism
$$\gamma:{M}^c_{g,n|s}\rarr {M}^c_{g,n+1} \times_{{M}^c_{g,n}} {M}^c_{g,n+1}
\times_{{M}^c_{g,n}} \cdots \times_{{M}^c_{g,n}} {M}^c_{g,n+1}\ .$$
The morphism $\gamma$ is a small resolution. The exceptional
loci are at most codimension 2 in ${M}^c_{g,n|s}$. Hence,
$$\mu^*(\psi_j) =\widehat{\psi}_j$$
for each nonstandard marking. We see
$$\mu_*\left(\widehat{\psi}_1^{j_1+1}\cdots 
\widehat{\psi}_s^{j_s+1}\right) =
\psi_1^{j_1+1}\cdots \psi_s^{j_s+1}.$$
The result then follows after push-forward to 
$M_{g,n}^c$
by the definition of the $\kappa$ classes.
\epf

By Lemma 1, the relations of Theorem \ref{rrr} are purely among 
the $\kappa$ classes in $A^*(M_{g,n}^c)$.

\subsubsection{Example} 
The $d=1$ case of Theorem \ref{rrr} immediately yields the relations
$$\forall \ k>n, \ \ \   \kappa_{2g-2+k} = 0\  \in A^*(M_{g,n}^c)$$
implied also by the vanishing results \eqref{vanni}.

More interesting relations occur for $d=2$.
By the  Chern class calculation \eqref{frrd},
$$c( \mathbb{A}^*_2 -\mathbb{B}_2) =
 \frac{1+\Delta_1}{1-\widehat{\psi}_1+\Delta_1} \cdot
\frac{1+\Delta_2}{1-\widehat{\psi}_2+\Delta_2}\ .$$
Using the series expansion
\begin{equation}\label{kcx}
\frac{1+x}{1-y+x} = 1+ \sum_{r\geq 0} y(y-x)^r
\end{equation}
and the diagonal intersection rules,
we obtain
\begin{equation*}
c( \mathbb{A}^*_2 -\mathbb{B}_2) =
 \left(1+ \sum_{r\geq 0} \widehat{\psi}_1^{r+1}\right)\cdot\\
\left(1 + \sum_{r\geq 0} \widehat{\psi}_2(\widehat{\psi}^r_2 -(2^r-1)\widehat{\psi}_2^{r-1} \Delta_2)\right)
\end{equation*}
In genus 3 with $n=0$, the $k=1$ case of Theorem \ref{rrr}
concerns
$$c_5( \mathbb{A}^*_2 -\mathbb{B}_2)  = \sum_{r_1+r_2=5} \widehat{\psi}_1^{r_1} \widehat{\psi}_2^{r_2}
 - \sum_{r=1}^4    (2^r-1)
\widehat{\psi}^4_{\star} \Delta_2 \ .$$
The push-forward is easily evaluated 
\begin{eqnarray*}
\epsilon_*^c( c_5( \mathbb{A}^*_2 -\mathbb{B}_2) ) 
& =& 4\kappa_3 + \kappa_1 \kappa_2 + \kappa_2\kappa_1 + 4 \kappa_3 - (1+3+7+15 ) \kappa_3 \\
& = & -18 \kappa_3 + 2 \kappa_1\kappa_2\ .
\end{eqnarray*}
We obtain the nontrivial
relation
$$-18 \kappa_3 + 2 \kappa_1\kappa_2=0\ \ \in A^*(M_{3}^c)\ .$$

\subsection{Richer relations}
The proof of Theorem \ref{rrr} naturally yields a richer set of
relations among the $\kappa$ classes.
The universal curve 
$$\pi:U \rarr M^c_{g,n|d}$$ carries the  basic
divisor classes
$${s} = c_1(S_U^*), \ \ \ \  \omega= c_1(\omega_\pi)$$
obtained from the universal subsheaf $S_U$ and the $\pi$-relative dualizing
sheaf. 

\begin{Proposition}\label{ggttr}
For all $a_i,b_i\geq 0$ and  $k > n$, 
$$\epsilon_* \left( \prod_{i=1}^m \pi_*(s^{a_i} \omega^{b_i}) \cdot
c_{2g-2+k}(\mathbb{A}_d^*- \mathbb{B}_d) \right) = 0  \  
\in A^*(M^c_{g,n}).$$
\end{Proposition}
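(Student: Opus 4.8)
The plan is to run the $\com^*$-localization argument of Theorem \ref{rrr} again, now with the class $\Phi_{g,n,d}$ enriched by a factor assembled from the universal curve over the stable quotient space. Over $\overline{Q}_{g,n}(\proj^1,d)$ the universal subsheaf $S_U$ carries the canonical $\com^*$-lift fixed in Section \ref{pr3}, and the relative dualizing sheaf $\omega_\pi$ is canonically $\com^*$-equivariant. Hence the divisor classes $s=c_1(S_U^*)$ and $\omega=c_1(\omega_\pi)$, together with the push-forwards $\pi_*(s^{a_i}\omega^{b_i})$, are $\com^*$-equivariant. First I would form the enriched class
$$\Phi'_{g,n,d} = \left( \prod_{i=1}^m \pi_*(s^{a_i}\omega^{b_i}) \cup \mathsf{e}(R^1\pi_*(S_U)) \cup \prod_{i=1}^n \text{ev}_i^*([\infty]) \right) \cap [\overline{Q}_{g,n}(\proj^1,d)]^{vir}$$
and study $\nu_*\!\left(\Phi'_{g,n,d}\,\mathsf{e}([1])^k\right)$. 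Exactly as in Theorem \ref{rrr}, the explicit factor $\mathsf{e}([1])^k$ forces the non-equivariant limit of this push-forward to vanish for every $k>0$.

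Next I would evaluate $\nu_*\!\left(\Phi'_{g,n,d}\,\mathsf{e}([1])^k\right)|_{M_{g,n}^c}$ by virtual localization. The central point is that the new factor $\prod_i \pi_*(s^{a_i}\omega^{b_i})$ is a sum of numerator classes pulled back from the universal curve, with no equivariant poles, and so cannot resurrect any of the fixed loci discarded in Theorem \ref{rrr}. The three vanishings (i)--(iii) of Section \ref{pr3} --- coming from $\lambda_h|_{M_{h,*}^c}=0$, from the collapse of $\mathsf{e}(R^1\pi_*(S_U))$ on loci carrying a node over $0\in\proj^1$, and from $\text{ev}_i^*([\infty])$ --- survive unchanged, since all three factors still occur in $\Phi'_{g,n,d}$. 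Therefore only the principal component \eqref{nrr} contributes.

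On the principal component the stable quotient is \eqref{jwq2}, so $S_U$ restricts to $\oh_C(-\sum_{j=1}^d\widehat{p}_j)$ and $\omega_\pi$ to the relative dualizing sheaf of the universal curve over $\overline{M}_{g,n|d}$. Thus $\omega$ restricts to the relative canonical class (the $\com^*$ fixes the domain curve, so $\omega_\pi$ has weight $0$ on the principal component) and $s$ to the section-divisor class, so that $\pi_*(s^{a_i}\omega^{b_i})$ restricts to the very class appearing in the statement. Multiplying the principal contribution computed in the proof of Theorem \ref{rrr} by these restricted insertions, applying the Mumford relation $c(\hodge)\,c(\hodge^*)=1$, and passing to the non-equivariant limit then identifies $\nu_*\!\left(\Phi'_{g,n,d}\,\mathsf{e}([1])^k\right)|_{M_{g,n}^c}$ with
$$(-1)^{3g-3+d+n+k}\,\epsilon^c_*\!\left( \prod_{i=1}^m \pi_*(s^{a_i}\omega^{b_i}) \cdot c_{2g-2+k}(\mathbb{A}_d^*-\mathbb{B}_d) \right).$$
Its vanishing is precisely the assertion of the Proposition.

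The step I expect to be the main obstacle is the equivariant bookkeeping of the insertion, which enters through the weight of $s$. Because the canonical lift of $S_U$ has $\com^*$-weight $1$ on the principal component, $c_1(S_U^*)$ restricts not to the bare section class but to that class shifted by $-\mathsf{e}([1])$. The clean remedy is to lift the insertion by the weight-$0$ twist, i.e. to build it from $c_1(S_U^*\otimes[1]) = s+\mathsf{e}([1])$, which restricts to exactly $s$ on the principal component and whose behavior on the non-contributing loci is irrelevant. (Equivalently, one may keep the naive lift and observe that the resulting relations differ from the stated ones by a triangular, invertible change depending on $\mathsf{e}([1])$.) With the weights handled, the remaining simplification is formally identical to the proof of Theorem \ref{rrr}, and confirming that the Mumford relation together with the non-equivariant limit produce exactly the displayed $\epsilon^c$-push-forward is where the genuine content lies.
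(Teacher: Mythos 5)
Your proposal is correct and is exactly the argument the paper intends: the paper's entire proof is the remark that Proposition \ref{ggttr} ``exactly follows the proof of Theorem \ref{rrr},'' and you have supplied the details --- the insertion $\prod_i \pi_*(s^{a_i}\omega^{b_i})$ is a pure numerator class that leaves the vanishings (i)--(iii) intact, so only the principal component contributes, where the insertion restricts to the stated class on $M^c_{g,n|d}$. Your observation about the weight of the canonical lift of $S_U$ (and the twist by $[1]$, or equivalently the triangular change of relations) is a genuine subtlety the paper leaves implicit, and you resolve it correctly.
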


The proof of Proposition \ref{ggttr} exactly follows the proof
Theorem \ref{rrr}. We leave the details to the reader.
By the rules of Section \ref{calrul}, the relations of Proposition 
\ref{ggttr} are
also purely among the $\kappa$ classes.

\section{Evaluation of the relations}
\label{evrel}

\subsection{Overview} Our goal here to explicitly evaluate
the relations of Theorem \ref{rrr} as polynomials in the  
$\kappa$ classes. By examining the coefficients, we
will obtain a proof of Theorem \ref{ooo}.
\subsection{Term counts}
Consider the total Chern class
\begin{equation}\label{bbt2}
c( \mathbb{A}^*_d -\mathbb{B}_d) =
 \prod_{i=1}^d  \frac{1+\Delta_i}{1-\widehat{\psi}_i + \Delta_i} \ .
\end{equation}
After substituting 
$$\Delta_i = D_{1,i}+ \ldots + D_{i-1,i},$$
we may expand the right side of \eqref{bbt2} fully. 
The resulting expression is a  formal series in the 
$d+ \binom{d}{2}$ variables{\footnote{The
sign on the diagonal variables is chosen
because of the self-intersection formula \eqref{kw3}.}}
$$\widehat{\psi}_1,\ldots, \widehat{\psi}_d, -D_{12},-D_{13}, 
\ldots,- D_{d-1,d}\ .$$ 
Let $M_r^d$ denote the coefficient in degree $r$,
$$c( \mathbb{A}^*_d -\mathbb{B}_d) =\sum_{r=0}^\infty
M_r^d(\widehat{\psi}_i,- D_{ij}).$$

\begin{Lemma} \label{gcd2} After setting all the variables to 1,
$$\sum_{r=0}^\infty M_r^d(\widehat{\psi}_i=1,-D_{ij}=1) \ t^r
\ = \ \frac{1}{1-dt}.$$
\end{Lemma}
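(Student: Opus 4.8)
The plan is to observe that forming the generating series $\sum_r M_r^d(1,\dots,1)\,t^r$ is nothing but a single homogeneous substitution into the product \eqref{bbt2}. Since each coefficient $M_r^d$ is homogeneous of degree $r$ in the variables $\widehat{\psi}_1,\dots,\widehat{\psi}_d,-D_{12},\dots,-D_{d-1,d}$, setting every variable equal to $t$ produces $t^r M_r^d(1,\dots,1)$. Hence
$$\sum_{r=0}^\infty M_r^d(\widehat{\psi}_i=1,-D_{ij}=1)\,t^r = c(\mathbb{A}_d^*-\mathbb{B}_d)\Big|_{\widehat{\psi}_i=t,\ D_{ij}=-t}\,,$$
where the sign convention is dictated by the fact that the chosen variable is $-D_{ij}$, so that $-D_{ij}=t$ means $D_{ij}=-t$. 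Thus the entire statement reduces to evaluating the explicit product under the uniform substitution $\widehat{\psi}_i=t$ and $D_{ij}=-t$.

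Next I would compute the diagonal sums under this substitution. Since $\Delta_i = D_{1,i}+\dots+D_{i-1,i}$ is a sum of exactly $i-1$ terms, each equal to $-t$, we get $\Delta_i = -(i-1)t$. Substituting into the $i$-th factor of \eqref{bbt2} gives
$$\frac{1+\Delta_i}{1-\widehat{\psi}_i+\Delta_i} = \frac{1-(i-1)t}{1-t-(i-1)t} = \frac{1-(i-1)t}{1-it}\,.$$

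Finally, the product over $i=1,\dots,d$ telescopes: the numerator $1-(i-1)t$ of the $i$-th factor cancels the denominator $1-(i-1)t$ of the $(i-1)$-th factor, leaving
$$\prod_{i=1}^d \frac{1-(i-1)t}{1-it} = \frac{1}{1-dt}\,,$$
which is the asserted identity. The only point requiring genuine care is the homogeneity reduction in the first step, namely verifying that \emph{set all variables to $1$ and grade by $t^r$} coincides with the uniform substitution of each variable to $t$ while tracking the sign convention on $-D_{ij}$; once this is in place the collapse is immediate, and the remainder is a one-line telescoping computation. I therefore expect no substantive obstacle beyond this bookkeeping.
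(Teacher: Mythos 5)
Your proposal is correct and follows exactly the paper's argument: the uniform substitution $\widehat{\psi}_i=t$, $D_{ij}=-t$ gives $\Delta_i=-(i-1)t$, so the product \eqref{bbt2} becomes $\prod_{i=1}^d \frac{1-(i-1)t}{1-it}$, which telescopes to $\frac{1}{1-dt}$. The extra care you take with the homogeneity bookkeeping is sound but is the same computation the paper performs.
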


\begin{proof}
After setting the variables to 1 in \eqref{bbt2}, we find
$$c_t( \mathbb{A}^*_d -\mathbb{B}_d) =
 \prod_{i=1}^d  \frac{1-(i-1)t}{1-it},$$
which is a telescoping product.
\end{proof}

Lemma \ref{gcd2} may be viewed  counting the number of
terms in the expansion of \eqref{bbt2},
$$M_r^d(\widehat{\psi}_i=1,-D_{ij}=1)\  = \ d^r\ .$$
 The simple
answer will play a crucial role in the analysis.

\subsection{Connected counts}
A monomial in the diagonal variables
\begin{equation} \label{gqq6}
D_{12},D_{13}, \ldots,D_{d-1,d}
\end{equation}
determines a set partition of $\{1, \ldots, d\}$
by the diagonal associations.
For example, the monomial $3D_{12}^2D_{1,3} D_{56}^3$ determines
the set partition
$$\{1,2,3\} \ \cup \ \{4\}\ \cup \ \{5,6\}$$
in the $d=6$ case.
A monomial in the variables \eqref{gqq6} is
{\em connected} if the corresponding
set partition consists of a single part with $d$ elements.

A monomial in the variables
$$\widehat{\psi}_1,\ldots, \widehat{\psi}_d, -D_{12},-D_{13}, 
\ldots, -D_{d-1,d}\ $$ 
is connected if
the corresponding monomial in the diagonal variables
obtained by setting all $\widehat{\psi}_i=1$
is connected.
Let $C^d_r$ be the summand of $M^d_r(\widehat{\psi}_i=1, -D_{ij}=1)$ 
consisting of the contributions of
only the connected monomials.

\begin{Lemma} \label{llgg}
We have
$$\sum_{d=1}^\infty \sum_{r=0}^\infty
C_r^d\  t^r \frac{z^d}{d!}  =
\log\left( 1+\sum_{d=1}^\infty \sum_{r=0}^\infty
d^r t^r\frac{z^d}{d!}
\right)\ .
$$
\end{Lemma}

\begin{proof}
By a standard application of Wick, the connected
and disconnected counts are related by exponentiation,
$$\exp(\sum_{d=1}^\infty \sum_{r=0}^\infty
C_r^d \ t^r\frac{z^d}{d!}) =
1+ \sum_{d=1}^\infty \sum_{r=0}^\infty
M_r^d(\widehat{\psi}_i=1, -D_{ij}=1) \ t^r\frac{z^d}{d!} \ .$$
The right side is then evaluated by Lemma \ref{gcd2}
\end{proof}

\subsection{$C_r^d$ for $r\leq d$}
We may write the series inside the logarithm in Lemma \ref{llgg}
in the following form,
$$F(t,z) = 1+\sum_{d=1}^\infty \sum_{r=0}^\infty
d^r t^r \frac{z^d}{d!}
=
\exp(tz \frac{d}{dz})\  e^z\ .$$
Expanding the exponential of the differential
operator by order in $t$ yields,
\begin{multline*}
F(t,z) = 
 e^z + tz e^z +  t^2(z^2+z)e^z + \\
t^3(z^3+3z^2+z)e^z + t^4(z^4+6z^3+7z^2+z)e^z + \ldots \ .
\end{multline*}
We have proven the following result.

\begin{Lemma}\label{aaaa}
$F(t,z)= e^z \cdot \sum_{r=0}^\infty t^r p_r(z)$
where
$$p_r(z) = \sum_{s=0}^r c_{r,s} z^{r-s}$$
is a degree $r$ polynomial.
\end{Lemma}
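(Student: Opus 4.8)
The plan is to extract the polynomial structure directly from the
differential-operator expression for $F(t,z)$. The key observation is that
$$F(t,z)=\exp\!\left(tz\frac{d}{dz}\right)e^z,$$
so I would first understand the action of the single operator
$\theta=z\frac{d}{dz}$ on the family $z^m e^z$. A short computation gives
$\theta(z^m e^z)=(mz^m+z^{m+1})e^z$, which shows that $\theta$ sends
$e^z\cdot(\text{polynomial})$ to $e^z\cdot(\text{polynomial})$ and raises the
top degree by exactly one while never lowering the bottom degree below that of
the input. Iterating, $\theta^r(e^z)=e^z\cdot p_r(z)$ for some polynomial
$p_r$, and since each application of $\theta$ raises the top degree by one
starting from the constant $1$, the polynomial $p_r$ has degree exactly $r$
with leading term $z^r$. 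Expanding
$\exp(t\theta)=\sum_{r\ge 0}\frac{t^r}{r!}\theta^r$ and regrouping by powers of
$t$ then yields $F(t,z)=e^z\sum_{r\ge 0}t^r p_r(z)$, which is the claimed
form (absorbing the $1/r!$ into the definition of $p_r$, consistent with the
displayed low-order terms where e.g.\ the coefficient of $t$ is $z$, not
$z/1!$ times anything larger).

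Next I would pin down the shape $p_r(z)=\sum_{s=0}^r c_{r,s}z^{r-s}$. The
point is that $\theta$ preserves the parity-type grading in the sense that
$\theta$ maps $z^{r-s}$ (the term with ``defect'' $s$ from the top degree $r$)
into a combination of $z^{r-s+1}$ and $z^{r-s}$, i.e.\ the defect can only stay
the same or increase by one as $r$ increases by one. Consequently every
monomial appearing in $p_r$ has exponent between $r-r=0$ and $r$, which is
exactly the assertion that $p_r=\sum_{s=0}^r c_{r,s}z^{r-s}$ is a degree $r$
polynomial. One can make this fully rigorous by a single induction on $r$: the
recursion $\theta^{r+1}e^z=\theta(e^z p_r)=e^z\,\theta_{+}(p_r)$, where
$\theta_{+}=\theta+z$ acts on polynomials by
$\theta_{+}(z^{r-s})=(r-s)z^{r-s}+z^{r-s+1}$, immediately gives the recursion
$c_{r+1,s}=(r-s)c_{r,s}+c_{r,s-1}$ for the coefficients, with top coefficient
$c_{r,0}=1$ for all $r$. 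This recursion keeps $s$ in the range $0\le s\le r$ at
each stage and confirms $\deg p_r=r$.

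I would carry out the steps in the following order: (1) establish the operator
identity $\theta(z^m e^z)=(mz^m+z^{m+1})e^z$ and derive
$\theta^r(e^z)=e^z p_r(z)$; (2) expand $\exp(t\theta)$ in powers of $t$ to get
$F(t,z)=e^z\sum_r t^r p_r(z)$; (3) run the induction on $r$ using the
conjugated operator $\theta_{+}=z^{-?}$---more precisely, using
$e^{-z}\theta e^{z}=\theta+z$---to obtain the coefficient recursion and read
off both $\deg p_r=r$ and the monomial range $0\le r-s\le r$. The numbers
$c_{r,s}$ are then, up to the explicit recursion, essentially Stirling-type
coefficients, though for the present Lemma only the degree and the support of
$p_r$ are needed.

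The main obstacle I anticipate is purely bookkeeping rather than conceptual:
matching the normalization of $p_r$ against the displayed expansion of
$F(t,z)$ so that the factorials from $\exp(t\theta)=\sum t^r\theta^r/r!$ are
correctly absorbed, and making sure the induction on the coefficient recursion
$c_{r+1,s}=(r-s)c_{r,s}+c_{r,s-1}$ is stated with the right base cases
($c_{0,0}=1$, and $c_{r,s}=0$ outside $0\le s\le r$) so that no spurious
monomials of negative exponent or of exponent exceeding $r$ can appear. Once
the operator identity $e^{-z}\theta e^z=\theta+z$ is in hand, everything else
is a clean induction, so I expect no serious difficulty beyond this careful
tracking of indices.
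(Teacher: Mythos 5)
Your approach is the same as the paper's, which simply expands the differential operator order by order in $t$ and reads off the pattern; your added induction via the conjugated operator $e^{-z}\theta e^{z}=\theta+z$ (with $\theta=z\frac{d}{dz}$) is a legitimate rigorization of what the paper leaves implicit, and the qualitative step --- that $\theta+z$ sends $z^m$ to $mz^m+z^{m+1}$, so the top degree rises by exactly one and no negative exponents appear --- is exactly what is needed.

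Two concrete slips, however. First, the normalization: the identity $F(t,z)=\exp(tz\frac{d}{dz})e^z$ is only correct if read \emph{without} the $1/r!$, i.e.\ the coefficient of $t^r$ in $F$ is literally $\theta^r(e^z)=\sum_{d\geq 0} d^r z^d/d!$ (check this termwise from the double sum defining $F$). So nothing should be ``absorbed'': your proposed $p_r=\theta^r(e^z)e^{-z}/r!$ gives $p_2=(z^2+z)/2$, which contradicts the displayed expansion $\ldots+t^2(z^2+z)e^z+\ldots$, and it would replace the leading coefficient $c_{r,0}=1$ by $1/r!$ --- fatal for the very next step of the paper, which uses $c_{r,0}=1$ to extract $\log(\frac{1}{1-tz})$, and for Lemma 5 on the growth of $c_{r,s}$. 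Second, your coefficient recursion is wrong: from $(\theta+z)(z^{r-s})=(r-s)z^{r-s}+z^{r-s+1}$ one gets $c_{r+1,s}=c_{r,s}+(r-s+1)c_{r,s-1}$ (these are Stirling numbers of the second kind, $c_{r,s}=S(r,r-s)$), not $c_{r+1,s}=(r-s)c_{r,s}+c_{r,s-1}$; your version yields $c_{3,1}=2$ rather than the correct $3$, and forces $c_{r+1,0}=r\,c_{r,0}$, contradicting your own assertion that $c_{r,0}=1$. Neither slip touches the degree statement of the Lemma itself, but both would propagate incorrectly into the arguments that follow it, so they need to be fixed.
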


By Lemma \ref{aaaa} and the coefficient
evaluation $c_{r,0}=1$, we see
$$\log(F(t,z)) = z + \log( \frac{1}{1-tz}) + \ldots $$
where the dots stand for terms 
of the form $t^rz^d$ with $r>d$.
We obtain the following result.

\begin{Proposition} \label{vvv}
The only nonvanishing $C_r^d$ for $r\leq d$ are
$C_0^1=1$ and
$$\sum_{r=1}^\infty C_r^r \ t^r \frac{z^r}{r!} = 
-\log(1-tz).$$
\end{Proposition}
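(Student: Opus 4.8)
The plan is to compute $\log F(t,z)$ directly from the factored form supplied by Lemma \ref{aaaa} and to isolate the range $r\le d$ by a bidegree argument. First I would use $F(t,z)=e^z\cdot\sum_{r\ge 0}t^r p_r(z)$ together with the normalization $p_0(z)=1$ (read off from the $t^0$-coefficient $e^z$ of $F$) to split the logarithm as
$$\log F(t,z) = z + \log\Big(1+\sum_{r\ge 1}t^r p_r(z)\Big).$$
By Lemma \ref{llgg} the left side is the generating function $\sum_{d,r}C_r^d\,t^r\frac{z^d}{d!}$, so it suffices to analyze $G(t,z)=1+\sum_{r\ge 1}t^r p_r(z)$ and then add back the isolated term $z$.

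The key observation is a comparison of the $t$-degree and the $z$-degree inside $\log G$. Since $p_r(z)$ has degree exactly $r$ in $z$ (Lemma \ref{aaaa}), any product $p_{r_1}(z)\cdots p_{r_k}(z)$ arising in the power-series expansion of $\log G$ contributes to $t^{\hpo r_1+\cdots+r_k}$ and has $z$-degree at most $r_1+\cdots+r_k$. Hence every monomial $t^r z^d$ occurring in $\log G$ satisfies $d\le r$. Intersecting this with the prescribed range $r\le d$ forces $r=d$: all coefficients $C_r^d$ with $1\le r<d$ vanish, and (since $\log G$ begins in $t$-order $1$) the only surviving contribution with $r=0$ is the separated term $z$, giving $C_0^1=1$ and $C_0^d=0$ for $d\ne 1$.

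On the diagonal $r=d$ the $z$-degree is maximal, so the only contribution to the coefficient of $t^r z^r$ comes from replacing each factor $p_r(z)$ by its leading monomial $c_{r,0}z^r=z^r$, using $c_{r,0}=1$ from Lemma \ref{aaaa}. Substituting $p_r(z)\mapsto z^r$ throughout $G$ produces the geometric series $\sum_{r\ge 0}(tz)^r=(1-tz)^{-1}$, whence the diagonal part of $\log G$ equals $-\log(1-tz)$. Combining this with the term $z$ yields the asserted identity $\sum_{r\ge 1}C_r^r\,t^r\frac{z^r}{r!}=-\log(1-tz)$ and the exceptional value $C_0^1=1$, with no other nonvanishing $C_r^d$ in the range $r\le d$. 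The one point requiring care is verifying that lower-order parts of the $p_{r_i}$ never leak onto the diagonal: any such part strictly lowers the total $z$-degree below $r_1+\cdots+r_k$, pushing the corresponding monomial into the region $d<r$, so it cannot affect the coefficient of $t^r z^r$. I expect this bookkeeping to be the only subtlety, and it is immediate once the degree count above is in place.
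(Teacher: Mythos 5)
Your argument is correct and follows essentially the same route as the paper: both split $\log F(t,z)=z+\log\bigl(1+\sum_{r\ge 1}t^rp_r(z)\bigr)$ using Lemma \ref{aaaa}, observe via $\deg_z p_r=r$ and $c_{r,0}=1$ that every monomial $t^rz^d$ in the logarithm satisfies $d\le r$ with the diagonal $d=r$ captured exactly by $\log\bigl(\tfrac{1}{1-tz}\bigr)$, and then read off the $r\le d$ coefficients via Lemma \ref{llgg}. Your write-up simply makes explicit the bidegree bookkeeping that the paper compresses into ``the dots stand for terms of the form $t^rz^d$ with $r>d$.''
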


\subsection{Evaluation}
Let $g$ and $n$ be fixed.
We are interested in calculating
$$R_{g,n}(t,z) = \sum_{d=1}^\infty \epsilon_*^c\left(c_t
( \mathbb{A}^*_d -\mathbb{B}_d)\right) \frac{z^d}{d!}\ .$$
By the straightforward application of 
the evaluation rules of Section \ref{calrul},
we find
\begin{equation}\label{bbx}
R_{g,n}(t,z) =  \exp\left( \sum_{d=1}^\infty
\sum_{r\geq d}^\infty (-1)^{d-1} C_r^d \kappa_{r-d}\ 
t^r z^{d} \right)\ .
\end{equation}
We rewrite \eqref{bbx} after separating out the
$r=d$ terms using  Proposition \ref{vvv} 
and the evaluation $\kappa_0=2g-2+n$,
\begin{eqnarray*}
R_{g,n}(t,-z) & = & \exp\left(  -\sum_{d=1}^\infty
\sum_{r\geq d}^\infty  C_r^d \kappa_{r-d}\ 
t^r z^{d} \right) \\ \nonumber
& = & (1-tz)^{2g-2+n}
\exp\left(  -\sum_{d=1}^\infty
\sum_{r> d}^\infty  C_r^d \kappa_{r-d}\ 
t^r z^{d} \right) 
\ .
\end{eqnarray*}     
The $t^rz^d$ coefficient of $R_{g,n}$ is
a valid relation in $A^*(M_{g,n}^c)$ if
$$r > 2g-2+n.$$      
The above
formula,  taken together with
Lemma \ref{llgg}, provides a very effective
approach to the relations of 
Theorem \ref{rrr}.

\subsection{Proof of Theorem \ref{ooo}}
The generating series for the
coefficients of the singleton $\kappa_{\ell>0}$ in the
$t^{d+\ell}z^d$ terms of $R_{g,n}(t,-z)$ is
\begin{equation}\label{bwq9}
R^\ell_{g,n}(t,-z)\  =\ 
-(1-tz)^{2g-2+n}
\sum_{d=1}^\infty
  C_{d+\ell}^d \ 
(tz)^{d} t^{\ell}\ . 
\end{equation}

In order to analyze the right side of \eqref{bwq9},
we will use Lemma \ref{aaaa}.
For $\ell \geq 0$, let
\begin{equation}\label{vwrt}
 G_\ell(t,z) = \sum_{d=1}^\infty c_{d+\ell,\ell}\  (tz)^d\ .
\end{equation}
By Lemma \ref{aaaa} and Proposition \ref{vvv},
\begin{eqnarray*}\sum_{\ell\geq 0}
\sum_{d=1}^\infty
  C_{d+\ell}^d \ 
(tz)^{d} t^{\ell}& = & \log \left(
\sum_{\ell\geq 0} G_\ell(t,z)\ t^\ell\right) \\ 
& = & \log \left( \frac{1}{1-tz} +
\sum_{\ell\geq 1} G_\ell(t,z)\ t^\ell\right) \\
& = & \log\left(\frac{1}{1-tz}\right) + \log\left(1 +
(1-zt)\sum_{\ell\geq 1} G_\ell(t,z)\ t^\ell\right)
\ .\end{eqnarray*}
So for $\ell>0$,
\begin{multline}
\label{gxx}
\sum_{d=1}^\infty
  C_{d+\ell}^d \ 
(tz)^{d} = \ \text{Coeff}_{t^\ell} 
\left( \log\left(1 +
(1-zt)\sum_{\ell\geq 1} G_\ell(t,z)\ t^\ell\right) \right)\ .
\end{multline}

The behavior of the coefficients $c_{r,s}$ 
is 
easily determined by induction on $s$.

\begin{Lemma} For $r\geq s$, \label{bbb} 
$c_{r,s}= f_s(r)$
where $f_s(r)$ is a polynomial of  degree $2s$ with leading term
$$f_s(r) = \frac{1}{2^s s!} r^{2s}+ \ldots \ .$$
\end{Lemma}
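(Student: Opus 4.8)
The plan is to prove the lemma by induction on $s$, the base case $s=0$ being the evaluation $c_{r,0}=1$ already recorded, which is a degree $0$ polynomial in $r$ with leading coefficient $1=\frac{1}{2^0\,0!}$. The engine of the induction is a recursion in $r$ satisfied by the $c_{r,s}$, which I would extract from the structure of Lemma \ref{aaaa}. Since the $t^r$-coefficient of $F$ equals $p_r(z)\,e^z=\sum_{d\geq 0}\frac{d^r}{d!}z^d$, applying $z\frac{d}{dz}$ to the $(r-1)$-st such sum gives $p_r(z)\,e^z=z\frac{d}{dz}\big(p_{r-1}(z)\,e^z\big)$, whence $p_r(z)=z\,p_{r-1}(z)+z\,p_{r-1}'(z)$. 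Reading off the coefficient of $z^{r-s}$ on both sides of this identity yields
\[
c_{r,s}=c_{r-1,s}+(r-s)\,c_{r-1,s-1}\ ,\qquad r>s\geq 1\ ,
\]
with initial value $c_{s,s}=0$ for $s\geq 1$ (the vanishing constant term of $p_s$).

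For the inductive step, fix $s\geq 1$ and regard the displayed recursion as a first order difference equation in $r$. By the inductive hypothesis $c_{r-1,s-1}=f_{s-1}(r-1)$ is a polynomial in $r$ of degree $2(s-1)$ with leading coefficient $\frac{1}{2^{s-1}(s-1)!}$, so the backward difference
\[
c_{r,s}-c_{r-1,s}=(r-s)\,c_{r-1,s-1}
\]
is a polynomial in $r$ of degree $2s-1$. Summation is inverse to the backward difference and raises the degree by exactly one, so accumulating these differences from $c_{s,s}=0$ expresses $c_{r,s}$, for every $r\geq s$, as a single polynomial $f_s(r)$ of degree $2s$. The leading coefficient is then pinned down by matching top-order terms: writing $f_s(r)=a_s\,r^{2s}+\cdots$, the finite difference of $r^{2s}$ produces $c_{r,s}-c_{r-1,s}=a_s\cdot 2s\,r^{2s-1}+\cdots$, which must agree with the leading term $\frac{1}{2^{s-1}(s-1)!}\,r^{2s-1}$ of the right hand side. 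Hence $2s\,a_s=\frac{1}{2^{s-1}(s-1)!}$, giving $a_s=\frac{1}{2^s\,s!}$ and completing the induction.

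There is essentially no obstacle here beyond bookkeeping, consistent with the lemma being \emph{easily determined}: the content lies entirely in the two observations that the discrete antidifference raises polynomial degree by one and that the finite difference of $r^{2s}$ contributes the factor $2s$ responsible for the product $\frac{1}{2^s\,s!}=\prod_{k=1}^{s}\frac{1}{2k}$. The only point demanding care is verifying that the antidifference is a genuine polynomial valid on the whole range $r\geq s$ (not merely asymptotically), which follows from Faulhaber's formula for sums of polynomial values together with the initial condition $c_{s,s}=0$. Alternatively, one may identify $c_{r,s}$ with the Stirling number of the second kind $S(r,r-s)$—indeed $p_r(z)=\sum_j S(r,j)\,z^j$ is the Touchard polynomial, and the recursion above is the classical Stirling recursion—and simply quote the known fact that $S(n,n-k)$ is a degree $2k$ polynomial in $n$ with leading coefficient $\frac{1}{2^k\,k!}$; the induction presented here is a self-contained derivation in exactly the normalization needed to analyze \eqref{bwq9}.
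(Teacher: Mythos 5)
Your proof is correct and is exactly the induction on $s$ the paper has in mind: the paper omits the argument entirely (``We leave the elementary proof of Lemma \ref{bbb} to the reader''), and your recursion $c_{r,s}=c_{r-1,s}+(r-s)\,c_{r-1,s-1}$ with $c_{s,s}=0$ (the Stirling recursion for $c_{r,s}=S(r,r-s)$), combined with the fact that discrete antidifferencing raises polynomial degree by exactly one, supplies precisely the missing details, including the leading coefficient $\frac{1}{2^s s!}=\prod_{k=1}^s\frac{1}{2k}$. As a useful byproduct, your recursion gives $c_{r,1}=\binom{r}{2}=\frac12 r^2-\frac12 r$, which shows the sign in the paper's displayed example $f_1(r)=\frac12 r^2+\frac12 r$ is a typo (it must be $-\frac12 r$, since $c_{2,1}=1$).
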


\noindent For example, $f_0(r)=1$ and
$$f_1(r) =  \frac{1}{2}r^2+\frac{1}{2}r \ . $$
We leave the elementary proof of Lemma \ref{bbb} to the reader

From \eqref{vwrt} and Lemma \ref{bbb}, we conclude for $\ell>0$,
$$G_\ell(t,z) =\frac{1}{2^\ell \ell!}
\frac{(2\ell)!}{(1-tz)^{2\ell+1}}+
\sum_{i=0}^{2\ell} \frac{\tilde{c}_{i,\ell}}{(1-tz)^{i}}$$
for $\tilde{c}_{i,\ell} \in \mathbb{Q}$.
Then by \eqref{gxx},
\begin{equation} \label{vrw}
\sum_{d=1}^\infty
  C_{d+\ell}^d \ 
(tz)^{d} = 
\ \text{Coeff}_{t^\ell} 
\left( \log\left(1 +
\sum_{\ell\geq 1} \frac{(2\ell)!!}{(1-tz)^{2\ell}} t^\ell\right) \right)\ldots ,
\end{equation}
where the dots stand for 
finitely many terms of
the form $(1-tz)^{-j}$ where $j<2\ell$.
By Proposition \ref{kwk2} proven in Section \ref{Nonvani} below,
\begin{equation} \label{kkk3}
\sum_{d=1}^\infty
  C_{d+\ell}^d \ 
(tz)^{d} = 
\frac{\alpha_\ell}{(1-zt)^{2\ell}} + \ldots
\end{equation}
with $\alpha_\ell\neq 0$.

We now return to the coefficients of the singleton
$\kappa_{\ell>0}$ in the
$t^{d+\ell}z^d$ terms of $R_{g,n}(t,-z)$.  
By \eqref{bwq9},
\begin{equation}\label{bwq5}
R^\ell_{g,n}(t,-z)\  =\ 
-\alpha_\ell (1-tz)^{2g-2+n-2\ell}t^\ell+\ldots
\end{equation}
where the dots stand for 
finitely many terms of
the form $(1-tz)^{j}t^\ell$ where $j>2g-2+n -2\ell$.
If
\begin{equation}\label{cond}
2g-2+n-2\ell <0,
\end{equation}
then the coefficient of 
$(tz)^d t^\ell$ in $R^\ell_{g,n}$
will be nonzero for 
for all large $d$.
Once
$$d+\ell > 2g-2+n,$$
the corresponding $\kappa$ relation is
valid by Theorem \ref{rrr}. If \eqref{cond} is satisfied,
$\kappa_\ell$ lies in the subring of
$\kappa^*(M_{g,n}^c)$
generated by $\kappa_1, \ldots, \kappa_{\ell-1}$.
\qed

\subsection{Series analysis} \label{Nonvani}
Define the double factorial by
$$(2\ell)!!= \frac{(2\ell)!}{2^\ell \ell!} = (2\ell-1)\cdot (2\ell-3) \cdots 1 \ $$
and let 
$$\phi(x) = 1 + \sum_{\ell  \geq 1} (2\ell)!!\  x^\ell  = 1+ x + 3x^2+ 15 x^3+ \ldots
\ $$
be the generating series.
Define $\alpha_\ell \in \mathbb{Q}$ for $\ell>0$ by
$$\log(\phi)=
\sum_{\ell\geq 1}
\alpha_\ell x^\ell\ .$$
Series expansion yields the first terms 
$$\log(\phi(x)) = 
x + \frac{5}{2} x^2 + \frac{37}{3} x^3 + \frac{353}{4} x^4 + \ldots \ .$$
To complete the proof of Theorem \ref{rrr}, we must
prove the following result.

\begin{Proposition}
\label{kwk2} $\alpha_\ell \neq 0$ for
 all $\ell>0$.
\end{Proposition}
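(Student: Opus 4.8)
The goal is to prove that every coefficient $\alpha_\ell$ in the logarithmic expansion $\log(\phi(x)) = \sum_{\ell \geq 1} \alpha_\ell x^\ell$ is nonzero, where $\phi(x) = \sum_{\ell \geq 0} (2\ell)!!\, x^\ell$ is the generating series of double factorials. The most natural approach is to show that all the $\alpha_\ell$ are in fact strictly positive. This is stronger than mere nonvanishing but should be much easier to establish inductively, since positivity is stable under the operations that relate $\alpha_\ell$ to the coefficients of $\phi$.

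**Key steps.** First I would extract a recursion. Writing $\phi(x) = 1 + \psi(x)$ with $\psi(x) = \sum_{\ell \geq 1}(2\ell)!!\, x^\ell$ and $L(x) = \log(\phi(x)) = \sum_{\ell \geq 1}\alpha_\ell x^\ell$, differentiate the identity $\phi\, L' = \phi'$ to obtain $\sum_k (2k)!!\, x^k \cdot \sum_j j\,\alpha_j x^{j-1} = \sum_k k\,(2k)!!\, x^{k-1}$. Comparing coefficients of $x^{\ell-1}$ gives the convolution recursion
\begin{equation*}
\ell\,\alpha_\ell \;=\; \ell\,(2\ell)!! \;-\; \sum_{k=1}^{\ell-1} (2k)!!\,(\ell-k)\,\alpha_{\ell-k}\ .
\end{equation*}
The main work is to control the sign of the right-hand side. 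Since the coefficients $(2k)!! = (2k-1)(2k-3)\cdots 1$ grow very rapidly, the dominant term $\ell\,(2\ell)!!$ should outweigh the subtracted convolution. The plan is to prove by strong induction on $\ell$ that $0 < \alpha_\ell \leq (2\ell)!!$, and simultaneously that the convolution sum is small relative to $\ell\,(2\ell)!!$. Concretely, assuming $0 < \alpha_{\ell-k} \leq (2(\ell-k))!!$ for all $1 \leq k \leq \ell - 1$, I would bound the subtracted sum using the multiplicative estimate $(2k)!!\,(2(\ell-k))!! \leq (2\ell)!!$ (which follows from $(2\ell)!!/(2(\ell-k))!! = (2\ell-1)(2\ell-3)\cdots(2(\ell-k)+1) \geq (2k-1)(2k-3)\cdots 1 = (2k)!!$, a comparison of $k$ descending odd factors). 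This yields $\sum_{k=1}^{\ell-1}(2k)!!\,(\ell-k)\,\alpha_{\ell-k} \leq (2\ell)!! \sum_{k=1}^{\ell-1}(\ell-k) = (2\ell)!!\,\binom{\ell}{2}$, which is of the wrong order to beat $\ell\,(2\ell)!!$ directly, so the crude bound is insufficient.

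**The main obstacle.** The difficulty is exactly this last point: the naive bound $\alpha_{\ell-k} \leq (2(\ell-k))!!$ loses too much, because it ignores that $\alpha_j$ is genuinely smaller than $(2j)!!$ for $j \geq 2$. The refined plan is to track a sharper upper bound for $\alpha_\ell$ that decays relative to $(2\ell)!!$—for instance proving $\alpha_\ell \leq (2\ell)!! / \ell$ or $\alpha_\ell \leq c\,(2\ell)!!\,\rho^\ell$ for suitable constants—so that the convolution genuinely becomes a subdominant correction. An alternative and perhaps cleaner route avoids the combinatorial inequalities entirely: note that $\phi(x)$ is (up to normalization) the moment generating series of a Gaussian, since $(2\ell)!! = \mathbb{E}[X^{2\ell}]$ for a standard normal $X$, giving the integral representation $\phi(x) = \frac{1}{\sqrt{2\pi}}\int_{-\infty}^{\infty} \frac{1}{1 - x t^2}\,e^{-t^2/2}\,dt$ as an asymptotic/Borel object, whence $\log\phi$ is the logarithm of a Stieltjes-type series. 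By a classical theorem, the reciprocal and logarithm of such series of moments have sign-definite coefficients; invoking log-convexity of the moment sequence $(2\ell)!!$ would then force $\alpha_\ell > 0$ for all $\ell$. I expect the positivity/log-convexity argument to be the decisive step, and I would present the elementary inductive version as the primary proof, with the moment-sequence interpretation as the conceptual explanation of why nonvanishing must hold.
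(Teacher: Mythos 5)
Your setup is sound---the recursion $\ell\,\alpha_\ell = \ell\,(2\ell)!! - \sum_{k=1}^{\ell-1}(2k)!!\,(\ell-k)\,\alpha_{\ell-k}$ is correct, and aiming for strict positivity of all $\alpha_\ell$ is the right target (the paper also proves positivity, not just nonvanishing)---but neither of your two proposed ways of closing the argument works as stated, so there is a genuine gap. The refined inductive hypothesis $\alpha_\ell \le (2\ell)!!/\ell$ is already false at $\ell=2$: with the paper's convention $(2\ell)!!=(2\ell-1)(2\ell-3)\cdots 1$ one has $(4)!!=3$ while $\alpha_2=\tfrac52>\tfrac32$. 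The moment-theoretic alternative is not a proof either: no classical theorem is named, $\phi$ is only the formal (divergent) ordinary generating series of the moments of $\chi^2_1$, and the standard positivity statements (e.g.\ positivity of cumulants for infinitely divisible laws) concern the exponential generating function, not the ordinary one used here. What is actually missing is a sharper product estimate. Your bound $(2k)!!\,(2(\ell-k))!! \le (2\ell)!!$ loses a factor of order $\ell$; the inequality needed---and the combinatorial heart of the paper's proof---is
$$(2k)!!\,(2\ell-2k)!! \;\le\; \frac{(2\ell)!!}{2\ell-1}\,, \qquad 1\le k\le \ell-1\,,$$
obtained by pairing the $\ell-k$ odd factors of $(2\ell)!!/(2k)!!$ against those of $(2\ell-2k)!!$ (and using the symmetry $k\leftrightarrow\ell-k$). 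With it, the symmetrization
$\sum_{k=1}^{\ell-1}(\ell-k)(2k)!!(2(\ell-k))!! = \tfrac{\ell}{2}\sum_{k=1}^{\ell-1}(2k)!!(2(\ell-k))!! \le \tfrac{\ell(\ell-1)}{2(2\ell-1)}(2\ell)!! < \ell\,(2\ell)!!$
shows that your \emph{original} hypothesis $0<\alpha_j\le (2j)!!$ does close the induction after all; no refinement of the upper bound is needed.

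For comparison, the paper takes a slightly different route around the same estimate: it derives the differential equation $2x^2\frac{d}{dx}\log\phi + x - 1 = -\phi^{-1}$, which gives $-2\ell\,\alpha_\ell=\beta_{\ell+1}$ for the coefficients $\beta_\ell$ of the inverse series $\phi^{-1}$, and then runs a joint sign-and-size induction ($\beta_\ell<0$, $|\beta_\ell|\le (2\ell)!!$) on the convolution $\phi\cdot\phi^{-1}=1$. That convolution carries no factor of $(\ell-k)$, so the estimate above applies even more directly. Either way, without that inequality your write-up does not establish the result.
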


Let $x=y^2$. Then $\phi(x(y))$ satisfies the differential equation
$$y^2\frac{d}{dy} (y\phi)= \phi-1\ .$$
Equivalently,
$$y^3 \frac{d}{dy} \log(\phi) + y^2 -1 = - \frac{1}{\phi}$$
Changing variables back to $x$, we find
\begin{equation}\label{bpt}
2x^2 \frac{d}{dx} \log(\phi) +x -1 = -\frac{1}{\phi}
\end{equation}
Let $\beta_\ell$ denote the
coefficients of the inverse series,
\begin{eqnarray*}
\phi(x)^{-1} & =& 1+\sum_{\ell\geq 0} \beta_\ell x^\ell \\
 & =& 1 -x -2\alpha_1 x^2 - 4\alpha_2 x^3 - 6 \alpha_3x^4 - \ldots\ ,
\end{eqnarray*}
where the second equality is obtained from \eqref{bpt}.

\begin{Lemma} $\beta_\ell \neq 0$ for all $\ell>0$. \label{jjl}
\end{Lemma}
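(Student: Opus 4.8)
The plan is to prove the stronger statement that $\beta_\ell < 0$ for every $\ell \geq 1$; nonvanishing is then immediate. The naive route — reading the $\beta_\ell$ off from the convolution identity $\phi\cdot\phi^{-1}=1$ — does not cooperate with a sign induction, because that recursion expresses $\beta_\ell$ as an \emph{alternating} combination of the positive coefficients $(2k)!!$ and the earlier $\beta_j$, so the signs compete. The essential idea is instead to convert the logarithmic equation \eqref{bpt} for $\phi$ into a \emph{quadratic} (Riccati) differential equation for $\psi=\phi^{-1}$, in which the induction hypothesis produces terms of a single sign.

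First I would rewrite \eqref{bpt} in terms of $\psi=\phi^{-1}$. Since $\log\psi=-\log\phi$, we have $\frac{d}{dx}\log\phi=-\psi'/\psi$, and $1/\phi=\psi$. Substituting into \eqref{bpt} and multiplying through by $-\psi$ turns it into the Riccati form
$$2x^2\psi' = \psi^2 + (x-1)\psi\ .$$
Writing $\psi = 1 + \sum_{\ell\geq 1}\beta_\ell x^\ell$ and extracting the coefficient of $x^m$ for $m\geq 2$ (the constant term of $\psi$ being $1$) isolates $\beta_m$ and gives the recursion
$$\beta_m = (2m-3)\,\beta_{m-1} - \sum_{k=1}^{m-1}\beta_k\,\beta_{m-k}\ ,$$
with base value $\beta_1=-1$. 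One checks $\beta_2=-2$, $\beta_3=-10$, $\beta_4=-74$ against \eqref{bpt}.

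The proof then concludes by induction on $m$. The base case $\beta_1=-1<0$ holds. Assuming $\beta_j<0$ for all $1\leq j\leq m-1$, the term $(2m-3)\beta_{m-1}$ is negative since $2m-3\geq 1$ for $m\geq 2$, while every product $\beta_k\beta_{m-k}$ appearing in the sum (with $1\leq k\leq m-1$, so both indices in range) is a product of two negative numbers and hence positive; thus $-\sum_{k=1}^{m-1}\beta_k\beta_{m-k}<0$. Both contributions are negative, so $\beta_m<0$, completing the induction.

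The one step requiring care, and the crux of the argument, is precisely the passage from the logarithmic equation \eqref{bpt} to the Riccati equation: it is the quadratic term $\psi^2$ that converts the hypothesis ``$\beta_j<0$'' into \emph{positive} summands $\beta_k\beta_{m-k}$, so that both terms of the recursion push $\beta_m$ in the same negative direction. Everything after that is a routine induction. Finally, since $\beta_{\ell+1}=-2\ell\,\alpha_\ell$ by \eqref{bpt}, the nonvanishing of the $\beta$'s is equivalent to the nonvanishing of the $\alpha$'s, so this argument also yields Proposition \ref{kwk2}.
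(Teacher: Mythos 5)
Your proof is correct, but it takes a genuinely different route from the paper's. The paper never touches the differential equation \eqref{bpt} in the proof of Lemma \ref{jjl}: it works instead from the convolution identity $\phi\cdot\phi^{-1}=1$, which gives
$$(2\ell)!!+\beta_\ell=-\sum_{k=1}^{\ell-1}(2k)!!\,\beta_{\ell-k}\ ,$$
a recursion in which (as you anticipated) the signs compete. To control this the paper runs a \emph{joint} induction on the two statements $\beta_\ell<0$ and $|\beta_\ell|\leq(2\ell)!!$, using the elementary estimate $(2k)!!\,(2\ell-2k)!!\leq(2\ell)!!/(2\ell-1)$ to force $(2\ell)!!+\beta_\ell<(2\ell)!!$. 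Your approach converts \eqref{bpt} into the Riccati equation $2x^2\psi'=\psi^2+(x-1)\psi$ for $\psi=\phi^{-1}$ and extracts the quadratic recursion $\beta_m=(2m-3)\beta_{m-1}-\sum_{k=1}^{m-1}\beta_k\beta_{m-k}$, which I have checked reproduces $-1,-2,-10,-74$; under the hypothesis that all earlier $\beta_j$ are negative, both terms are manifestly negative, so a single-statement induction suffices. What each buys: your argument is shorter and needs no magnitude estimate, and the quadratic recursion is a clean by-product; the paper's argument yields the extra quantitative information $|\beta_\ell|<(2\ell)!!$ and does not rely on the differential equation at all (though \eqref{bpt} is in any case needed to relate the $\beta$'s to the $\alpha$'s via $\beta_{\ell+1}=-2\ell\alpha_\ell$, exactly as you use it at the end).
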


\begin{proof} Series expansion yields
$$\phi(x)^{-1} = 1-x -2x^2-10x^3 -74 x^4 - \ldots \ .$$
We will establish the following two properties for $\ell>0$
by joint induction: 
\begin{enumerate}
\item[(i)] $\beta_\ell <0$,
\item[(ii)] $|\beta_\ell| \leq (2\ell)!!$\ .
\end{enumerate}
By inspection, the conditions hold in the base case $\ell=1$. 

Let $\ell >1$ and 
assume conditions (i)-(ii) hold for all $\ell'<\ell$.
Since $\phi\cdot \phi^{-1}=1$,
\begin{eqnarray}
(2\ell)!! + \beta_\ell & =& - \sum_{k=1}^{\ell-1} (2k)!! \cdot \beta_{\ell-k} 
\label{ffb}\\ \nonumber
& \leq &\sum_{k=1}^{\ell-1} (2k)!! \cdot (2\ell-2k)!! ,
\end{eqnarray}
where the second line uses (ii).
For 
$\frac{\ell}{2} \leq k \leq \ell-1$,
\begin{eqnarray*}
(2k)!! \cdot (2\ell-2k)!! & = &(2\ell)!!\frac{1} {2\ell-1} \
\frac{3}{2\ell-3}\ 
\cdots \frac{2\ell-2k-1}{2k+1} \\
& \leq & (2\ell)!!\frac{1} {2\ell-1}.
\end{eqnarray*}
By putting the two above inequalities together, we obtain
\begin{equation*}
(2\ell)!! + \beta_\ell \leq (\ell-1) \cdot (2\ell)!!\frac{1} {2\ell-1} <(2\ell)!! \ .
\end{equation*}
Hence, $\beta_\ell <0$.
Since also
$$(2\ell)!!+ \beta_\ell >0$$
by the first equality of \eqref{ffb} and (i),  we see
$|\beta_\ell| < (2\ell)!!$.
\end{proof}

Lemma \ref{jjl} and the relation 
$$-2\ell \alpha_\ell = \beta_{\ell+1}$$
together complete the proof of Proposition \ref{kwk2}.

\section{Independence}
\label{indy}

\subsection{Tautological classes} \label{tay}
The moduli space $M_{g,n}^c$ has an algebraic 
 stratification by
topological type.
The push-forward of the $\kappa$ and $\psi$ classes
from the strata generate the {\em tautological ring}
$$R^*(M_{g,n}^c) \subset A^*(M_{g,n}^c)\ ,$$
see \cite{GP2}. Following the Gorenstein philosophy
explained in \cite{FP1}, we will study
the independence of
$$\kappa_1, \ldots, \kappa_{g-1+\lfloor{\frac{n}{2}}\rfloor}
\in R^*(M_{g,n}^c)$$
through degree $g-1+\lfloor{\frac{n}{2}}\rfloor$
by pairing with strata classes.

\subsection{Case $n=1$}
We first prove Theorem \ref{ttt}
for $M_{g,1}^c$. By stability, $g\geq 1$.
To each partition $\mathbf{p}\in P(d)$, we associate
a $\kappa$ monomial,
$$\kappa_{\mathbf{p}} = \kappa_{p_1} \kappa_{p_2} \cdots \kappa_{p_\ell} \ \in R^*(M_{g,1}^c)\ .$$
Theorem \ref{ttt} is equivalent to the independence of the $|P(g-1)|$
monomials
$$\{\ \kappa_{\mathbf{p}} \ | \ \mathbf{p} \in P(g-1) \ \}$$
in $R^*(M_{g,1}^c)$.

To each partition $\mathbf{p} \in P(g-1)$ of
length $\ell$, we associate a codimension $g-1$
stratum $S_\mathbf{p} \subset M_{g,1}^c$ by the following construction.
Start with a chain of elliptic curves $E_i$ of length $\ell+1$ with the marking on the
first,
\begin{equation} \label{fbn}
 E^*_1 - E_2 - E_3 - \ldots - E_\ell-E_{\ell+1}\ .
\end{equation}
The asterisk indicates the marking.
Since $\ell\leq g-1$, such a chain does not exceed genus $g$.
Next, we add elliptic tails{\footnote{An elliptic
tail is an unmarked
elliptic curve meeting the rest of the curve in exactly 1 point.}} to the first $\ell$ 
elliptic components.
To the curve $E_i$, we add $p_i-1$ elliptic tails.
Let $C$ be the resulting curve.
The total genus of $C$ is
$$\ell+1 + (g-1) - \ell = g\ .$$
The number of nodes of $C$ is
$$\ell + (g-1)-\ell = g-1\ .$$
Hence, $C$ determines a codimension $g-1$ stratum $S_{\mathbf{p}} \subset M_{g,1}^c$.

The moduli in $S_{\mathbf{p}}$ is found mainly on the first $\ell$ components
of the original chain \eqref{fbn}. Each such $E_i$ has $p_i+1$ moduli parameters.
All other components (including $E_{\ell+1}$) are elliptic tails with 1
moduli parameter each.

The $\lambda_g$-evaluation on $R^*(M_{g,1}^c)$ discussed in Section \ref{lammm}
yields a pairing on partitions $\mathbf{p}, \mathbf{q} \in P(g-1)$,
$$\mu_g(\mathbf{p}, \mathbf{q}) = \int_{\overline{M}_{g,1}}
\kappa_{\mathbf{p}} \cdot [S_{\mathbf{q}}] \cdot \lambda_g  \ \in \mathbb{Q}\ .$$

\begin{Lemma}\label{haya} For all $g\geq 1$, the matrix $\mu_g$ is
nonsingular.
\end{Lemma}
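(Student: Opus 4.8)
The plan is to evaluate the pairing $\mu_g(\mathbf{p},\mathbf{q})$ explicitly by restricting the integrand to the boundary stratum $S_{\mathbf{q}}$ and exploiting the product structure of that stratum. Since every component of the generic curve parametrized by $S_{\mathbf{q}}$ has genus $1$, the stratum is the image of a gluing map $\xi$ from a product $\prod_v \overline{M}_{1,k_v}$ of moduli of pointed genus-one curves, one factor for each of the $g$ elliptic components. First I would record the two restriction formulas I need. Because the curve is of compact type, the Hodge bundle splits as a direct sum over components, so $\xi^*\lambda_g = \prod_v \lambda_1^{(v)}$; and the $\kappa$ classes are additive along strata, so $\xi^*\kappa_a = \sum_v \kappa_a^{(v)}$. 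Expanding $\xi^*\kappa_{\mathbf{p}} = \prod_i\bigl(\sum_v \kappa_{p_i}^{(v)}\bigr)$ and pairing against $\prod_v \lambda_1^{(v)}$ reduces $\mu_g(\mathbf{p},\mathbf{q})$, up to a positive automorphism factor, to a sum of products of local integrals over the elliptic factors.

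The key structural point, which I expect to produce a triangular matrix, is a dimension count on each factor. Each factor carries exactly one class $\lambda_1^{(v)}$ of degree $1$, so a term survives only if the $\kappa$ classes assigned to the component at $v$ have total degree $\dim_{\com}\overline{M}_{1,k_v}-1$. For the chain components $E_1,\dots,E_m$ of $S_{\mathbf{q}}$ this required degree is $q_i$, while for $E_{m+1}$ and every elliptic tail it is $0$. Since the parts $p_i$ are strictly positive, none can be placed on a one-dimensional factor, so every part must land on one of the big components, and the parts landing on $E_i$ must sum to exactly $q_i$. In other words, a nonzero contribution forces $\mathbf{q}$ to be a coarsening of $\mathbf{p}$. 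Ordering $P(g-1)$ by any linear extension of the coarsening partial order, for instance by decreasing number of parts, the matrix $\mu_g$ is triangular with respect to this ordering, and it suffices to show the diagonal entries are nonzero.

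For the diagonal, when $\mathbf{p}=\mathbf{q}$ the only admissible distributions assign each part $p_i$ as a singleton to a chain component of matching size, so $\mu_g(\mathbf{p},\mathbf{p})$ is a positive combinatorial multiple of $\prod_i \int_{\overline{M}_{1,p_i+1}} \kappa_{p_i}\lambda_1$, with the one-dimensional components each contributing the factor $\int_{\overline{M}_{1,1}}\lambda_1 = \tfrac{1}{24}$. Each remaining local integral I would compute by writing $\kappa_{p_i} = \epsilon_*\psi^{p_i+1}$ and applying the $\lambda_g$-formula of Section \ref{lammm} in genus $1$, giving $\int_{\overline{M}_{1,p_i+1}}\kappa_{p_i}\lambda_1 = \int_{\overline{M}_{1,p_i+2}}\psi_{p_i+2}^{\,p_i+1}\lambda_1 = \tfrac{1}{24}$, since the relevant multinomial coefficient equals $1$. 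Hence every diagonal entry is a positive rational, the matrix is triangular with nonzero diagonal, and $\mu_g$ is nonsingular.

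The genuinely delicate part is the bookkeeping behind the two restriction formulas and the dimension count, in particular verifying that no positive part of $\mathbf{p}$ can be absorbed by a one-dimensional component and that $\lambda_g$ contributes exactly one factor of $\lambda_1$ to each component. Once these are in place the triangularity is automatic and the nonvanishing of the diagonal follows from the closed evaluation of the elliptic integrals, so the main effort is organizing the combinatorics of the distribution of parts rather than any hard intersection theory.
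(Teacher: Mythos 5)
Your proposal is correct and follows essentially the same route as the paper: restrict $\lambda_g$ to the stratum as a product of $\lambda_1$'s on the elliptic components, distribute the factors $\kappa_{p_i}$ among the components, use the dimension count to get triangularity (the paper phrases this via the length partial order, you via the refinement order, which amounts to the same thing), and observe the diagonal entries are products of $\int_{\overline{M}_{1,p+1}}\kappa_p\lambda_1=\tfrac{1}{24}$. Your extra step of deriving that evaluation from the genus~$1$ case of the $\lambda_g$-formula is a fine way to justify the number the paper simply quotes.
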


\begin{proof}
To evaluate the pairing, we first restrict $\lambda_g$ to $S_{\mathbf{q}}$
by distributing a $\lambda_1$ to each elliptic component.
To pair 
$\kappa_{\mathbf{p}}$  with the class $[S_{\mathbf{q}}]\cdot \lambda_g$, we must
 distribute the factors $\kappa_{p_i}$ to the components $E_j$ 
of $S_{\mathbf{q}}$ in all possible
ways. By the dimension constraints imposed by
 the moduli parameters of the components of $S_{\mathbf{q}}$,
we immediately conclude
$$\mu_g(\mathbf{p}, \mathbf{q})= 0$$
unless $\ell(\mathbf{p}) \geq \ell(\mathbf{q})$. Moreover, if
$\ell(\mathbf{p})= \ell(\mathbf{q})$, the pairing vanishes unless
$\mathbf{p}=\mathbf{q}$.

We have already shown $\mu_g$ to be upper-triangular with 
respect to the length partial ordering on $P(g-1)$.
To establish the nonsingularity of $\mu_g$, we must show the
diagonal entries $\mu_g(\mathbf{p},\mathbf{p})$ do not vanish.
Since $\mu_g(\mathbf{p},\mathbf{p})$ is a product
of factors of the form
$$\int_{\overline{M}_{1,p+1}} \kappa_p \lambda_1 = \frac{1}{24},$$
the required nonvanishing holds.
\end{proof}
By Lemma \ref{haya}, the $\kappa$ monomials of degree $g-1$
are independent. The proof of Theorem \ref{ttt} for $M_{g,1}^c$ is
complete.

\subsection{Case $n=2$}
We now consider Theorem \ref{ttt}
for ${M}_{g,2}^c$. By stability, $g \geq 1 $.
We must prove the independence of the $|P(g)|$
monomials
$$\{\ \kappa_{\mathbf{p}} \ | \ \mathbf{p} \in P(g) \ \}$$
in $R^*(M_{g,2}^c)$.

To each partition $\mathbf{p} \in P(g)$ of length $\ell$, 
we associate a codimension $g-1$
stratum $T_\mathbf{p} \subset M_{g,1}^c$ by the following construction.
Start with a chain of elliptic curves $E_i$ of length $\ell$ with the markings
 on the
first and last,
\begin{equation} \label{fbn3}
 E^*_1 - E_2 - E_3 - \ldots - E^*_\ell \ .
\end{equation}
Since $\ell\leq g$, such a chain does not exceed genus $g$.
Next, we add elliptic tails 
to the  $\ell$ 
elliptic components of \eqref{fbn3}.
To the curve $E_i$, we add $p_i-1$ elliptic tails.
Let $C$ be the resulting curve.
The total genus of $C$ is
$$\ell + g - \ell = g$$
The number of nodes of $C$ is
$$\ell-1 + g-\ell = g-1\ .$$
Hence, $C$ determines a codimension $g-1$ stratum $T_{\mathbf{p}} \subset M_{g,1}^c$.

As before, the $\lambda_g$-evaluation on $R^*(M_{g,2}^c)$ 
yields a pairing on partitions $\mathbf{p}, \mathbf{q} \in P(g)$,
$$\nu_g(\mathbf{p}, \mathbf{q}) = \int_{\overline{M}_{g,2}}
\kappa_{\mathbf{p}} \cdot [T_{\mathbf{q}}] \cdot \lambda_g  \ \in \mathbb{Q}\ .$$

\begin{Lemma}\label{hayav} For all $g\geq 1$, the matrix $\nu_g$ is
nonsingular.
\end{Lemma}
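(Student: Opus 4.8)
The plan is to mirror the proof of Lemma \ref{haya} as closely as possible, establishing that $\nu_g$ is nonsingular by showing it is triangular with respect to a partial ordering on $P(g)$ and that its diagonal entries are nonzero. The essential structural difference from the $n=1$ case is that the chain \eqref{fbn3} now carries markings on \emph{both} ends, so it has length $\ell$ rather than $\ell+1$, and the genus bookkeeping changes accordingly: the chain itself contributes $\ell$ to the genus, and the $g-\ell$ added elliptic tails make up the difference, yielding $g-1$ nodes and a codimension $g-1$ stratum $T_{\mathbf{p}}\subset M_{g,2}^c$. First I would describe the moduli parameters of $T_{\mathbf{q}}$: each of the $\ell$ chain components $E_i$ of \eqref{fbn3} carries a fixed number of moduli (depending on how many marked points and nodes it bears), while every elliptic tail contributes exactly one modulus.

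Next I would compute the pairing $\nu_g(\mathbf{p},\mathbf{q})$ by restricting $\lambda_g$ to $T_{\mathbf{q}}$, distributing one $\lambda_1$ to each elliptic component, and then distributing the factors $\kappa_{p_i}$ of $\kappa_{\mathbf{p}}$ among the components of $T_{\mathbf{q}}$ in all admissible ways. As in Lemma \ref{haya}, the dimension constraint on each component forces most distributions to vanish. The key combinatorial claim is that the resulting matrix is triangular with respect to the length partial ordering: $\nu_g(\mathbf{p},\mathbf{q})=0$ unless $\ell(\mathbf{p})\geq\ell(\mathbf{q})$, and when $\ell(\mathbf{p})=\ell(\mathbf{q})$ the pairing vanishes unless $\mathbf{p}=\mathbf{q}$. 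The diagonal entries $\nu_g(\mathbf{p},\mathbf{p})$ then factor as products of the local integrals $\int_{\overline M_{1,p+1}}\kappa_p\lambda_1=\frac{1}{24}$, together with the integrals arising from the two \emph{marked} chain endpoints, which carry an extra marked point compared to the $n=1$ situation.

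The main obstacle I anticipate is the analysis of the two distinguished endpoint components $E_1$ and $E_\ell$ of \eqref{fbn3}. Because these each carry a standard marking, their local contributions to the diagonal pairing $\nu_g(\mathbf{p},\mathbf{p})$ are governed by integrals of the form $\int_{\overline M_{1,m}}\kappa_p\,\psi^{\,?}\,\lambda_1$ with an extra marked point, rather than the clean $\frac{1}{24}$ appearing in Lemma \ref{haya}. I would need to verify that these endpoint factors are again nonzero, most naturally by invoking the $\lambda_g$-formula from Section \ref{lammm} (or a direct genus-$1$ computation) to evaluate them explicitly. A secondary subtlety is that when $\ell=1$ the chain \eqref{fbn3} degenerates to a single component carrying both markings, so the boundary cases of the length ordering must be checked separately; but I expect these to cause no genuine difficulty once the endpoint integrals are pinned down. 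Granting the nonvanishing of the diagonal entries, triangularity immediately yields that $\nu_g$ is nonsingular, which establishes the independence of the $|P(g)|$ monomials $\kappa_{\mathbf p}$ and completes the proof of Theorem \ref{ttt} for $M_{g,2}^c$.
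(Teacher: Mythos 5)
Your proposal follows exactly the paper's route: the paper states that the proof of Lemma \ref{hayav} is identical to that of Lemma \ref{haya} and leaves the details to the reader, and your triangularity-with-respect-to-length plus nonvanishing-diagonal argument is precisely that proof. The endpoint subtlety you flag is in fact vacuous: each end component $E_1^*$ (resp.\ $E_\ell^*$) of the chain carries one standard marking, one node, and $p_1-1$ (resp.\ $p_\ell-1$) elliptic tails, so it lives on $\overline{M}_{1,p+1}$ just as in the $n=1$ case and contributes the same local factor $\int_{\overline{M}_{1,p+1}}\kappa_p\lambda_1=\frac{1}{24}$ with no extra $\psi$ insertion needed.
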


The proof is identical to the proof of Lemma \ref{haya}. We leave
the details to the reader.
The proof of Theorem \ref{ttt} for $M_{g,2}^c$ is complete.

\subsection{Proof of Theorem \ref{ttt}} \label{pttt}
To complete the proof of Theorem \ref{ttt},
we must consider the case $n\geq 3$ and
prove the independence of the 
monomials
$$\{\ \kappa_{\mathbf{p}} \ | \ \mathbf{p} \in P(g-1
+\lfloor \frac{n}{2} \rfloor) \ \}$$
in $R^*(M_{g,n}^c)$.

We will relate the question to the established 
cases with 1 and 2
markings.
Let
$$\widehat{g} = g +  \lfloor\frac{n-1}{2} \rfloor, \ \ \widehat{n}=
n - 
2 \lfloor \frac{n-1}{2} \rfloor\ .$$
If $n$ is odd, then $\widehat{n}=1$. If $n$ is even,
$\widehat{n}=2$. Note
$$\widehat{g}-1+\lfloor \frac{\widehat{n}}{2} \rfloor=
 g-1+\lfloor \frac{n}{2} \rfloor\ .$$

To start, assume $\widehat{n}=1$.
We have constructed strata classes in
$M_{\widehat{g},1}^c$ which 
show the independence of the 
monomials
$$\{\ \kappa_{\mathbf{p}} \ | \ \mathbf{p} \in P(\widehat{g}-1) \ \}$$
in $R^*(M_{\widehat{g},1}^c)$.
For each $\mathbf{q} \in P(\widehat{g}-1)$,
the stratum 
$$S_{\mathbf{q}}\subset M_{\widehat{g},1}^c$$
 consists
of a configuration of $\widehat{g}$
elliptic curves.
We construct a corresponding stratum
$$S'_{\mathbf{q}}\subset M_{g,n}^c$$
by the following method.
Choose any subset{\footnote{The
particular choice of subset is not important.}} 
of $\lfloor \frac{n -1}{2}\rfloor$ 
elliptic components of $S_{\mathbf{q}}$.
For each elliptic component $E$ selected,
replace $E$ with a rational component carrying 2
additional markings.{\footnote{The particular
markings chosen are not important.}} The construction trades 
$\lfloor \frac{n -1}{2}\rfloor$ genus
for 2$\lfloor \frac{n -1}{2}\rfloor$ markings.

Theorem \ref{ttt} is implied by
the nonsingularity of 
the $\lambda_g$-pairing between the $\kappa$ monomials of 
degree  $\widehat{g}-1$ and the strata
classes $[S'_{\mathbb{q}}]$.
The proof of the nonsingularity is identical to
the proof of Lemma \ref{haya}.

The $\widehat{n}=2$ case proceeds by exactly the
same method.
Again, elliptic components of the strata
$$T_{\mathbf{q}}\subset M_{\widehat{g},2}^c$$
are
traded for rational components with 2 additional
markings. Theorem \ref{ttt} is deduced
by nonsingularity of the $\lambda_g$-pairing. \qed

\subsection{Proof of Proposition \ref{vvww}} 
Consider $M_g^c$ for $g\geq 2$.
Let 
$$P^*(g-1) \subset P(g-1) \ \setminus\ \{(1,\ldots, 1) \}$$
be the subset excluding the longest partition.
We will first prove the independence of the
monomials
$$\{\ \kappa_{\mathbf{p}} \ | \ \mathbf{p} \in P^*(g-1) \ \}$$
in $R^*(M_{g}^c)$.
The result shows there can be at most a single
$\kappa$ relation in degree $g-1$.

To each partition $\mathbf{p} \in P^*(g-1)$ of length
$\ell\leq g-2$, we associate a codimension $g-2$
stratum $U_\mathbf{p} \subset M_{g}^c$ by the following construction.
Start with a chain of curves of length $\ell+1$,
\begin{equation*} 
 X - E_2 - E_3 - \ldots - E_\ell-E_{\ell+1}\ ,
\end{equation*}
where $X$ has genus 2 and all the $E_i$ are elliptic curves.
Since $\ell\leq  g-2$, such a chain does not exceed genus $g$.
Next, we add elliptic tails
to the first $\ell$ components. Since
 $p_1$ is the greatest part of $\mathbf{p}$, $p_1\geq 2$.
To the curve $X$, we add $p_1-2$ elliptic tails. 
To the curve $E_i$, we add $p_i-1$ elliptic tails for $2\leq i \leq \ell$.
Let $C$ be the resulting curve.
The total genus of $C$ is
$$2+\ell + (g-1) - \ell-1 = g$$
The number of nodes of $C$ is
$$\ell + (g-1)-\ell-1 = g-2\ .$$
Hence, $C$ determines a codimension $g-2$ stratum $U_{\mathbf{p}} \subset M_{g}^c$.

The $\lambda_g$-evaluation on $R^*(M_{g}^c)$ 
yields a 
 pairing on  $\mathbf{p}, \mathbf{q} \in P^*(g-1)$,
$$\omega_g(\mathbf{p}, \mathbf{q}) = \int_{\overline{M}_{g}}
\kappa_{\mathbf{p}} \cdot [U_{\mathbf{q}}] \cdot \lambda_g  \ \in \mathbb{Q}\ .$$
The argument of Lemma \ref{haya} yields the
following result.

\begin{Lemma}\label{thayav} For all $g\geq 2$, the matrix $\omega_g$ is
nonsingular.
\end{Lemma}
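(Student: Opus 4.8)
The plan is to mimic the proof of Lemma \ref{haya} in this new geometric setting, where the base component $X$ now has genus $2$. The key structural feature I would exploit is the same length-based triangularity: the pairing $\omega_g(\mathbf{p},\mathbf{q})$ vanishes unless $\mathbf{p}$ and $\mathbf{q}$ are compatible with the dimension budget of the strata $U_{\mathbf{q}}$, forcing an upper-triangular matrix with respect to the partial order by length $\ell(\mathbf{p})$. First I would restrict $\lambda_g$ to the stratum $U_{\mathbf{q}}$ by distributing a $\lambda_1$ to each of the $\ell$ elliptic components of the original chain, a $\lambda_1$ to each appended elliptic tail, and the remaining $\lambda_2$ factor to the genus $2$ component $X$; since $\lambda_g$ restricts to a product of the top Chern classes of the Hodge bundles on the components of a curve of compact type, this accounts for the full rank $g$.

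Next I would carry out the dimension count component by component. Each elliptic curve $E_i$ in the chain \eqref{fbn} contributes $p_i+1$ moduli parameters, the genus $2$ component $X$ contributes its dimension $3$, and each elliptic tail contributes $1$. When pairing $\kappa_{\mathbf{p}}$ against $[U_{\mathbf{q}}]\cdot\lambda_g$, I must distribute the factors $\kappa_{p_i}$ of $\kappa_{\mathbf{p}}$ across the components, and the tight dimension constraints immediately kill the pairing unless $\ell(\mathbf{p})\geq \ell(\mathbf{q})$, with equality forcing $\mathbf{p}=\mathbf{q}$. This is the identical mechanism that produced triangularity in Lemma \ref{haya}, so the combinatorial bookkeeping transfers essentially verbatim.

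The one genuinely new ingredient, and the step I expect to be the main obstacle, is computing the diagonal entries $\omega_g(\mathbf{p},\mathbf{p})$ and verifying they are nonzero. In Lemma \ref{haya} every diagonal entry factored into copies of the elementary integral $\int_{\overline{M}_{1,p+1}}\kappa_p\lambda_1=\tfrac{1}{24}$. Here the same factorization applies to every elliptic component, but the genus $2$ piece $X$ introduces a distinct local factor, namely an integral of the shape $\int_{\overline{M}_{2,1}}\kappa_{p_1-2}\lambda_2$ (with the marking recording the node attaching $X$ to the chain). The work is to confirm this genus $2$ Hodge integral is nonzero for every relevant value of $p_1-2\geq 0$; once that nonvanishing is in hand, $\omega_g(\mathbf{p},\mathbf{p})$ is a product of nonzero rational numbers and the required nonsingularity follows.

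Assuming the genus $2$ factor is nonzero, the matrix $\omega_g$ is upper-triangular with nonvanishing diagonal, hence nonsingular, which establishes the independence of $\{\kappa_{\mathbf{p}}\mid \mathbf{p}\in P^*(g-1)\}$ and thereby bounds the number of degree $g-1$ relations on $M_g^c$ by one, completing Proposition \ref{vvww}. The guiding principle throughout is that the genus $2$ base component absorbs exactly the single extra unit of complex dimension needed to drop from codimension $g-1$ (as in the marked cases) to codimension $g-2$, while preserving the triangular pairing structure.
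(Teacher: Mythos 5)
Your overall strategy is exactly the paper's: the published proof of Lemma \ref{thayav} is the single remark that the argument of Lemma \ref{haya} applies, and your triangularity analysis --- restrict $\lambda_g$ to $U_{\mathbf{q}}$ as a $\lambda_2$ on $X$ times a $\lambda_1$ on each elliptic component, match the dimension budget component by component, and conclude that the pairing vanishes unless $\ell(\mathbf{p})\geq\ell(\mathbf{q})$ with equality forcing $\mathbf{p}=\mathbf{q}$ --- is the intended mechanism.

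However, the step you yourself single out as the crux is both misstated and left unproved. The component $X$ carries $p_1-2$ elliptic-tail nodes \emph{and} one chain node, hence $p_1-1$ special points, and a $\kappa$ class restricted to a stratum is the sum of $\kappa$ classes \emph{of the same index} on the components; so the genus $2$ local factor in the diagonal entry is $\int_{\overline{M}_{2,\hpo p_1-1}}\kappa_{p_1}\lambda_2$, not $\int_{\overline{M}_{2,1}}\kappa_{p_1-2}\lambda_2$. The integral you wrote down vanishes for dimension reasons except when $p_1=4$ (the integrand has degree $p_1$ while $\dim\overline{M}_{2,1}=4$), so taken literally your diagonal entries would be zero in almost all cases and the argument would collapse; the same slip appears in your dimension count, where you give $X$ only its $3$ unmarked moduli. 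The repair is immediate: $\deg\kappa_{p_1}=p_1=2\cdot 2-3+(p_1-1)$ is the socle degree, and by the $\lambda_g$-formula quoted in Section \ref{lammm} one has $\int_{\overline{M}_{2,\hpo p_1-1}}\kappa_{p_1}\lambda_2=\int_{\overline{M}_{2,1}}\psi_1^{2}\lambda_2=\frac{7}{5760}\neq 0$, independent of $p_1$. With that substitution (and the observation that every other local factor is $\int_{\overline{M}_{1,p_i+1}}\kappa_{p_i}\lambda_1=\frac{1}{24}$ or $\int_{\overline{M}_{1,1}}\lambda_1=\frac{1}{24}$), the diagonal entries are nonzero and your proof closes.
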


The independence of the $\kappa$ monomials in degrees at most $g-2$
is easier and proven in a similar way. 
To each partition $\mathbf{p} \in P(g-2)$ of length
$\ell$, we associate a codimension $g-1$
stratum $U'_\mathbf{p} \subset M_{g}^c$ by the following construction.
Start with a chain of elliptic curves of length $\ell+2$,
\begin{equation*} 
 E_0-E_1 - E_2 - E_3 - \ldots - E_\ell-E_{\ell+1}\ .
\end{equation*}
Since $\ell\leq  g-2$, such a chain does not exceed genus $g$.
Next, we add elliptic tails
to the components. To $E_i$, for  
 $1\leq  i \leq \ell$,
we add $p_i-1$ elliptic tails.
To $E_0$ and $E_{\ell+1}$,
we add nothing.
Let $C$ be the resulting curve.
The total genus of $C$ is
$$\ell+2 + (g-2) - \ell = g$$
The number of nodes of $C$ is
$$\ell+1 + (g-2)-\ell = g-1\ .$$
Hence, $C$ determines a codimension $g-1$ stratum 
$U'_{\mathbf{p}} \subset M_{g}^c$.

The $\lambda_g$-evaluation on $R^*(M_{g}^c)$ 
yields a 
 pairing on  $\mathbf{p}, \mathbf{q} \in P(g-2)$,
$$\omega'_g(\mathbf{p}, \mathbf{q}) = \int_{\overline{M}_{g}}
\kappa_{\mathbf{p}} \cdot [U'_{\mathbf{q}}] \cdot \lambda_g  \ \in \mathbb{Q}\ .$$
Again, the argument of Lemma \ref{haya} yields the
required result.

\begin{Lemma}\label{thayav2} For all $g\geq 2$, the matrix $\omega'_g$ is
nonsingular.
\end{Lemma}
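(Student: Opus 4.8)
The plan is to reproduce the proof of Lemma \ref{haya} verbatim in structure, now for the strata $U'_{\mathbf{q}}$, showing that $\omega'_g$ is triangular with nonzero diagonal with respect to the partial order by length of partitions on $P(g-2)$. First I would restrict $\lambda_g$ to a stratum $U'_{\mathbf{q}}$. Since every irreducible component of the generic curve in $U'_{\mathbf{q}}$ is elliptic and the curve is of compact type, the Hodge bundle splits as a direct sum over components, so $\lambda_g$ restricts to the product $\prod_E \lambda_1^{(E)}$ of the $\lambda_1$ classes of the $g$ elliptic components $E$, namely the chain curves $E_0,\ldots,E_{\ell(\mathbf{q})+1}$ together with the elliptic tails. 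The integral over $U'_{\mathbf{q}}$ then factors as a product of integrals over the components.

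Next I would pair $\kappa_{\mathbf{p}}$ against $[U'_{\mathbf{q}}]\cdot\lambda_g$. Using the additivity $\kappa_a|_{U'_{\mathbf{q}}} = \sum_E \kappa_a^{(E)}$ under restriction to the stratum, the pairing becomes a sum over all distributions of the parts $p_1,\ldots,p_{\ell(\mathbf{p})}$ among the components. The dimension bookkeeping is the crux: the middle components $E_i = \overline{M}_{1,q_i+1}$ for $1\leq i\leq \ell(\mathbf{q})$ have dimension $q_i+1$ and already carry $\lambda_1$, so they absorb $\kappa$-degree exactly $q_i$; the two capping components $E_0,E_{\ell(\mathbf{q})+1}\cong \overline{M}_{1,1}$ and each elliptic tail have dimension $1$ and already carry $\lambda_1$, so they absorb no $\kappa$ at all, since every part is positive. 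Hence a nonzero term requires every part of $\mathbf{p}$ to land on some $E_i$, with the parts assigned to $E_i$ summing to $q_i$.

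This forces triangularity exactly as in Lemma \ref{haya}. Since each $E_i$ must receive $\kappa$-degree $q_i\geq 1$ and the parts of $\mathbf{p}$ are positive, a valid distribution exists only if $\ell(\mathbf{p})\geq \ell(\mathbf{q})$, and when $\ell(\mathbf{p})=\ell(\mathbf{q})$ each $E_i$ receives a single part equal to $q_i$, forcing $\mathbf{p}=\mathbf{q}$. Thus $\omega'_g$ is upper-triangular for the length ordering. For the diagonal entry $\omega'_g(\mathbf{p},\mathbf{p})$, each middle component contributes $\int_{\overline{M}_{1,p_i+1}}\kappa_{p_i}\lambda_1 = \tfrac{1}{24}$ and each capping component and each tail contributes $\int_{\overline{M}_{1,1}}\lambda_1 = \tfrac{1}{24}$, so the entry is a nonzero product of factors $\tfrac{1}{24}$, up to a positive combinatorial and automorphism factor. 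Nonsingularity of $\omega'_g$ follows.

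The step I expect to require the most care is the dimension bookkeeping in the presence of the two extra tail-free capping components $E_0$ and $E_{\ell(\mathbf{q})+1}$. Unlike in Lemma \ref{haya}, one must check that these elliptic ends genuinely absorb no $\kappa$ class and therefore contribute only inert factors of $\tfrac{1}{24}$, so that they neither create new off-diagonal terms nor disturb the length-triangularity. Once this is verified, everything else is identical to the proof of Lemma \ref{haya}.
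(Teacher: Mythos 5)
Your proposal is correct and follows essentially the same route as the paper, which simply invokes the argument of Lemma \ref{haya}: triangularity of $\omega'_g$ with respect to the length ordering on $P(g-2)$, with nonvanishing diagonal entries given by products of $\int_{\overline{M}_{1,q_i+1}}\kappa_{q_i}\lambda_1 = \frac{1}{24}$ and $\int_{\overline{M}_{1,1}}\lambda_1=\frac{1}{24}$. Your explicit check that the tail-free capping components $E_0$ and $E_{\ell+1}$ absorb no $\kappa$ class and contribute only inert factors is exactly the one point where the $U'_{\mathbf{q}}$ case differs from $S_{\mathbf{q}}$, and you have handled it correctly.
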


Together, Lemmas \ref{thayav} and \ref{thayav2} complete the proof
of Proposition \ref{vvww}. \qed

\section{Universality of genus 0}
\label{uniz}

\subsection{Genus 5}\label{aaan}
Do the relations of Theorem \ref{rrr}
generate the entire ideal of relations in $\kappa^*(M_{g}^c)$?
Since Proposition \ref{ggttr} contains
the relations of Theorem \ref{rrr}, we may ask the
same question of the richer system.
The answer to these questions is no. The first example occurs in
$\kappa^6(M_5^c)$.

There are 11 $\kappa$ monomials of degree 6.
By the evaluation rules of Section \ref{calrul}, the $\kappa$ relations
in codimension $6$ generated by  
Proposition \ref{ggttr} are the {\em same} for all the rings
$$\kappa^*(M_{5}^c), \ \kappa^*(M_{4,2}^c),  \ 
\kappa^*(M_{3,4}^c), \ \kappa^*(M_{2,6}^c), \ 
\kappa^*(M_{1,8}^c), \ \kappa^*(M_{0,10}^c)\ .$$
On $M^c_{0,10}$, there are 4 basic types{\footnote{There
are several actual divisors of each type depending
on the marking distribution. We select one of each type.}} of boundary divisors 
determined
by the point splittings
$$8+2,\ \ 7+3,\ \ 6+4,\ \ 5+5 .$$
The pairings of these divisors
with the $\kappa$ monomials $$\kappa_6,\  \kappa_{5}\kappa_1, \ \kappa_4\kappa_2,\
\kappa_3^2$$ on $M_{0,10}^c$
are easily seen to determine a nonsingular $4\times 4$ matrix.
Hence, the number of independent $\kappa$ relations in
$\kappa^6(M_{0,10}^c)$ is at most 7. In fact, Proposition \ref{ggttr}
generates 7 independent relations.

The number of divisor classes in $R^*(M_5^c)$ is 3 given by
$\kappa_1$ and the 2 boundary divisors with genus splittings
$4+1$ and $3+2$.
The Gorenstein conjecture for $M_5^c$ predicts
$R^6(M_5^c)$ to have rank 3. The rank of
$R^6(M_5^c)$ can be proven to be 3 via an application{\footnote{We
thank C. Faber for pointing out the argument.}} of
Getzler's relation \cite{G}. Therefore, there {\em must} be at least
8
relations among the $\kappa$ monomials of degree 6 in 
$M_5^c$. We have proven the method of Proposition
\ref{ggttr} does not yield all the $\kappa$ relations in $R^6(M_5^c)$.

\subsection{Genus 0} \label{bbbn}
In \cite{kap2}, a set of relations obtained
from the virtual geometry of the moduli space of
stable maps is proven to
generate  all the
$\kappa$ relations in the rings $\kappa^*(M_{0,n}^c)$.

\begin{Question}\label{bmm2}
Does Proposition \ref{ggttr}  generate all  the
$\kappa$ relations in the rings $\kappa^*(M_{0,n}^c)$?
\end{Question}

The answer to Question \ref{bmm2} is
affirmative at least for $n\leq 12$. We list below 
the Betti polynomials $B_n(t)$ of $\kappa^*(M^c_{0,n})$ for
low $n$.

\begin{eqnarray*}
B_3 &= &1 \\
B_4 &= &1+t \\
B_5 & = & 1+t+t^2 \\
B_6 & = & 1+t + 2t^2 +t^3 \\
B_7 & = & 1 + t + 2t^2 + 2t^3 + t^4 \\
B_8 & = & 1 + t + 2t^2 + 3t^3 + 3t^4 + t^5\\
B_9 & = & 1 + t + 2t^2 + 3t^3 + 4t^4 +3t^5 + t^6\\
B_{10}& = & 1 + t + 2t^2 + 3t^3 + 5t^4 + 5t^5 + 4t^6+   t^7\\
B_{11}& = & 1 + t + 2t^2 + 3t^3 + 5t^4 + 6t^5 + 7t^6+  4t^7+ t^8\\
B_{12}& = & 1 + t + 2t^2 + 3t^3 + 5t^4 + 7t^5 + 9 t^6+   8t^7+ 5t^8 + t^9\\
\end{eqnarray*}


From the table of Betti numbers, a
formula is easily guessed.
Let $$P(d,k)\subset P(d)$$
be the subset of partitions of $d$ of length at most $k$,
and let $|P(d,k)|$ be the order. We see
$$\text{dim}_{\mathbb{Q}}\ \kappa^d(M_{0,n}^c) = |P(d,n-d-2)|$$
holds in all the above cases.

\setcounter{Theorem}{4}
\begin{Theorem} 
A $\mathbb{Q}$-basis of $\kappa^d(M_{0,n}^c)$ is given by
$$\{ \kappa_{\mathbf{p}} \ | \ \mathbf{p} \in P(d,n-2-d)\ \} \  .$$
\end{Theorem}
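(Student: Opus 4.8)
The plan is to establish the two inequalities $\dim_{\mathbb{Q}}\kappa^d(M_{0,n}^c)\geq |P(d,n-2-d)|$ and $\dim_{\mathbb{Q}}\kappa^d(M_{0,n}^c)\leq |P(d,n-2-d)|$ separately, identifying the surviving monomials as the basis along the way. Throughout I use $M_{0,n}^c=\overline{M}_{0,n}$, so $\kappa^*(M_{0,n}^c)$ is a subring of the Chow ring of the smooth projective variety $\overline{M}_{0,n}$ and the $\lambda_g$-evaluation $\phi$ of Section \ref{lammm} is simply $\phi(\gamma)=\int_{\overline{M}_{0,n}}\gamma$, since $\lambda_g=1$ in genus $0$. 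Because the $\kappa_i$ generate the ring, the full set $\{\kappa_{\mathbf{p}}\mid\mathbf{p}\in P(d)\}$ already spans $\kappa^d(M_{0,n}^c)$; the content is that the long partitions can be discarded and the short ones are independent.

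\textbf{Lower bound (independence).} This is the genus $0$ analogue of the strata pairings of Section \ref{indy}, and I expect it to be the clean half. For $\mathbf{q}=(q_1,\ldots,q_\ell)\in P(d,n-2-d)$ I would build a boundary stratum $Z_{\mathbf{q}}\subset \overline{M}_{0,n}$ of codimension $n-3-d$, that is, a stable tree with exactly $n-2-d$ rational vertices. A dimension count forces the vertex dimensions to total $\sum_v (n_v-3)=d$, so I arrange $\ell$ ``heavy'' vertices of dimensions $q_1,\ldots,q_\ell$ (i.e.\ $n_v=q_j+3$) together with $n-2-d-\ell$ rigid three-pointed vertices, distributing the $n$ markings and $n-3-d$ nodes along a chain. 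The identity $\sum_v n_v=n+2(n-3-d)=\sum_j(q_j+3)+3(n-2-d-\ell)$ shows such a stable tree exists. The key computation is the pairing
\[
\mu(\mathbf{p},\mathbf{q})=\int_{\overline{M}_{0,n}}\kappa_{\mathbf{p}}\cdot[Z_{\mathbf{q}}].
\]
Since $\kappa$ classes restrict additively to the boundary, $\kappa_{\mathbf{p}}|_{Z_{\mathbf{q}}}$ expands as a sum over all ways of distributing the parts of $\mathbf{p}$ among the vertices, and the resulting product $\prod_v\int_{\overline{M}_{0,n_v}}\kappa_{(\text{parts at }v)}$ is nonzero exactly when the parts assigned to each vertex sum to its dimension. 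Hence a distribution contributes iff the parts of $\mathbf{p}$ group into $\ell$ blocks with sums $q_1,\ldots,q_\ell$, i.e.\ iff $\mathbf{p}$ refines $\mathbf{q}$; all such contributions are positive because each $\int_{\overline{M}_{0,q_j+3}}\kappa_{q_j}=1$. Thus $\mu$ is triangular for the refinement order with strictly positive diagonal, hence nonsingular, and the classes $\{\kappa_{\mathbf{p}}\mid\mathbf{p}\in P(d,n-2-d)\}$ are linearly independent.

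\textbf{Upper bound (spanning by short partitions).} Here I must show every $\kappa_{\mathbf{p}}$ with $\ell(\mathbf{p})>n-2-d$ lies in the span of strictly shorter monomials, so that induction on length collapses the spanning set onto $P(d,n-2-d)$. I would read the relations off the series of Section \ref{evrel}, namely $R_{0,n}(t,-z)=(1-tz)^{n-2}\exp\!\big(-\sum_{d\geq1}\sum_{r>d}C^d_r\kappa_{r-d}t^rz^d\big)$, whose coefficient of $t^rz^s$ is a valid relation in $\kappa^{r-s}(M_{0,n}^c)$ precisely when $r>n-2$. The threshold is exactly right: a degree-$D$ length-$\ell$ monomial first appears in $R_{0,n}$ at $t$-power $D+\ell$ (in the coefficient of $t^{D+\ell}z^\ell$), so the relation carrying it as a top-length term becomes available just when $D+\ell>n-2$, i.e.\ $\ell>n-2-D$. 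Processing lengths downward, each such relation lets one eliminate over-long monomials modulo shorter ones, which is the reduction I want.

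\textbf{The main obstacle.} The delicate point is the upper bound. The top-length component of the single coefficient $t^{D+\ell}z^\ell$ of $R_{0,n}$ is only one linear combination of the length-$\ell$, degree-$D$ monomials, so Theorem \ref{rrr} alone eliminates at most one such monomial per degree. To triangulate away \emph{all} over-long partitions one needs the top-length components of many relations to span, and this is where the richer relations of Proposition \ref{ggttr} (with the extra $\pi_*(s^{a}\omega^{b})$ insertions) must be brought in; whether even these generate the full ideal is precisely Question \ref{bmm2}, known only for $n\leq 12$. The safe route I would take to close the argument in general is to invoke the completeness of the stable-map relations established in \cite{kap2}: combined with the independence above, they force $\dim_{\mathbb{Q}}\kappa^d(M_{0,n}^c)=|P(d,n-2-d)|$, and the two bounds then identify $\{\kappa_{\mathbf{p}}\mid\mathbf{p}\in P(d,n-2-d)\}$ as the desired basis.
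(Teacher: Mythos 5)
Your independence argument is essentially the paper's: for each $\mathbf{q}\in P(d,n-2-d)$ the paper builds a chain $V_{\mathbf{q}}$ of $n-2-d$ rational components whose socle dimensions are the parts of $\mathbf{q}$ (padded by zeros), and shows the pairing $\int\kappa_{\mathbf{p}}\cdot[V_{\mathbf{q}}]$ is triangular with nonvanishing diagonal. That half of your proposal is correct and matches the paper.

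The genuine gap is the upper bound, which you have correctly located but not closed. The paper does \emph{not} derive the upper bound from the stable-quotient relations at all. It argues by Poincar\'e duality on the smooth projective $\overline{M}_{0,n}$: $\dim_{\mathbb{Q}}\kappa^d(M_{0,n}^c)$ is the rank of the pairing of $\kappa^d$ against $A^{n-3-d}(M_{0,n}^c)$, and the latter is generated by boundary strata $S_\Gamma$. The key point, equation \eqref{ywwy}, is that $\int\kappa_{\mathbf{p}}\cdot S_\Gamma$ depends only on the multiset of socle dimensions $q_i=\mathsf{m}(R_i)+\mathsf{n}(R_i)-3$ of the components of $\Gamma$, because intersecting a $\kappa$ monomial with a stratum merely distributes the $\kappa$ factors over the components; hence every stratum pairs with $\kappa^d$ exactly as the special stratum $V_{\mathbf{q}(\Gamma)}$ does, with $\mathbf{q}(\Gamma)\in P(d,n-2-d)$. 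The rank of the pairing is therefore at most $|P(d,n-2-d)|$, with no relations needed. Your route cannot be completed as written: Theorem \ref{rrr} gives only one relation per bidegree, and whether Proposition \ref{ggttr} generates the full ideal is precisely the open Question \ref{bmm2}. Your fallback of citing the completeness of the stable-map relations from \cite{kap2} does not repair this, because completeness of a generating set for the ideal does not by itself produce the dimension of the quotient unless one also shows those relations reduce every long monomial to short ones --- the very step you could not carry out --- and it at best defers (and at worst circularly presupposes) the Betti number computation that Theorem \ref{htt5} is meant to establish. The missing idea is the duality/strata-reduction argument above.
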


\begin{proof}
In order for $P(d,n-d-2)$ to be nonempty, we must have $$d\leq n-3.$$
%
We first prove the independence of the $\kappa$ monomials
associated to 
$P(d, n-d-2)$ by intersection 
with strata classes in
$R^{n-3-d}(M_{0,n}^c)$.
To each partition 
$$\mathbf{p} \in P(d, n-d-2),$$
 we associate a codimension $n-3-d$
stratum $V_\mathbf{p} \subset M_{0,n}^c$ by the following construction.
We write the parts of $\mathbf{p}$ as
$$(p_1, \ldots, p_\ell, p_{\ell+1}, \ldots,p_{n-d-2})$$
where $p_{\ell+\delta} = 0$ for $\delta >0$.
Start with a chain of rational curves of length $n-d-2$,
\begin{equation*} 
 R_1 - R_2 - R_3 - \ldots - R_{n-d-2}\ .
\end{equation*}
Next, we add markings{\footnote{The particular
markings chosen are not important.}} to the components:
\begin{enumerate}
\item[$\bullet$]  $p_1+2$ markings to $R_1$,
\item[$\bullet$] $p_i+1$ markings to $R_i$ for $2\leq i \leq n-d-3$, 
\item[$\bullet$]  $p_{n-d-2}+2$ markings to $R_{n-d-2}$,
\end{enumerate}
Let $C$ be the resulting curve.
The total number of markings of $C$ is
$$2+ d + n-d-2 = n\ .$$ 
The number of nodes of $C$ is $n-3-d$.
Hence, $C$ determines a codimension $n-3-d$ stratum 
$V_{\mathbf{p}} \subset M_{0,n}^c$.

A simple analysis following the strategy of the proof
of Lemma \ref{haya} shows the paring on $P(d,n-d-2)$
given by
\begin{equation*}
(\mathbf{p}, \mathbf{q}) \mapsto \int_{M_{0,n}^c} \kappa_{\mathbf{p}}
\cdot [V]_{\mathbf{q}}
\end{equation*}
is  upper-triangular and nonsingular. We conclude 
the $\kappa$ monomials associated to $P(d,n-d-2)$
are linearly independent.

The strata of $M_{0,n}^c$ are indexed by marked trees.
Given a marked tree $\Gamma$ with $n-2-d$ vertices, the
associated stratum 
$$S_\Gamma \subset M_{0,n}^c$$
parameterizes curves $C$ with marked dual graph $\Gamma$.
In other words, $C$ is a
tree of marked rational components
$$R_1,\ldots, R_{n-2-d}\ .$$
To $S_\Gamma$, we associate a partition $\mathbf{q}(\Gamma)\in P(d,n-d-2)$
by the following construction.
Let $\mathsf{m}(R_i)$ and $\mathsf{n}(R_i)$ denote
the numbers of markings and nodes incident to $R_i$.
Let
$$q_i= \mathsf{m}(R_i)+ \mathsf{n}(R_i) -3.$$
By stability, $q_i\geq 0$. After reordering by size,
$$\mathbf{q}(\Gamma)=(q_1, \ldots, q_{n-d-2}) \in P(d,n-d-2)\ .$$

Let $\mathbf{p} \in P(d)$.
The intersection of $\kappa_{\mathbf{p}}$ with a stratum class $S$ 
is obtained by distributing the factors $\kappa_{p_i}$ 
to the components of $S$.
We conclude
\begin{equation}\label{ywwy}
\int_{M_{0,n}^c} \kappa_{\mathbf{p}} \cdot S_\Gamma =
\int_{M_{0,n}^c} \kappa_{\mathbf{p}} \cdot V_{\mathbf{q(\Gamma)}}
\end{equation}
for all $\mathbf{p} \in P(d)$.

By Poincar\'e duality{\footnote{For $M_{0,n}^c$, singular
cohomology and Chow agree.}}, the dimension of
$\kappa^d(M_{0,n}^c)$ is the rank of the intersection pairing
$$ \kappa^d(M_{0,n}^c) \times A^{n-3-d}(M_{0,n}^c) \rarr \mathbb{Q}.$$
The classes of  strata generate $A^{n-3-d}(M_{0,n}^c)$. Moreover,
only the special strata $V_{\mathbf{q}}$ need by considered
by \eqref{ywwy}. So, 
\begin{equation*}
\text{dim}_{\mathbb{Q}}\ \kappa^d(M_{0,n}^c) 
\leq |P(d,n-d-2)|\ .
\end{equation*}
The independence property together with the above 
dimension estimate yields the basis result.
\end{proof}

\pagebreak
\subsection{Proof of Theorem  \ref{izz}} \label{gg3}
\subsubsection{Bound}
By  Theorem \ref{mmmj} (proven in \cite{kap2}),
we have a surjection
$$\kappa^{d}(M_{0,2g+n}^c) \stackrel{\iota_{g,n}}{\rarr} 
\kappa^d(M_{g,n}^c) \rarr 0.$$
By Theorem 5,
to prove $\iota_{g,n}$ is an isomorphism, we need
only establish  
$$\text{dim}_{\mathbb{Q}} \ \kappa^d(M_{g,n}^c) \geq 
|P(d,2g-2+n-d)|\ $$
for $n>0$. We will obtain the bound by refining the 
argument for Theorem 2.

\subsubsection{Dual graph types}
A dual graph of type $A(g_1, \ldots, g_r)$ with $g_i\geq 1$
is a chain of $r$ vertices of genera $g_1, \ldots, g_r$ with 2 markings 
on the ends. The corresponding curves are of the form:
$$C_{g_1}^* - C_{g_2} - \ldots - C_{g_r}^*\ .$$
If $r=1$, the unique vertex carries both markings.

A dual graph of type $B(g_1, \ldots, g_r|h_1, \ldots, h_{r-1})$
with $g_i,  h_j \geq 1$
is comb of $2r-1$ vertices with  1 marking.
The corresponding curves are of the form:
$$
 \begin{array}{ccccccccc}
 C^*_{g_1}&   -&  C_{g_2} &   - &  \ldots  
&   - &   C_{g_{r-1}} & - & C_{g_r}  \vspace{5pt}\\
\vspace{5pt}
    | &    &   | &   &    &    &   | & &  \\
\vspace{5pt}
C_{h_1}&     &  C_{h_2} &    &  \ldots  &    &   
C_{h_{r-1}}\ . &  &   
\end{array}
$$ 
There are $r-1$ vertices of valence 3 and $r$ vertices
of valence 1. The marking is included in the valence count.

\subsubsection{Case $n=1$}
Let $\mathbf{p}\in P(d)$ be a partition of length $\ell=a+b$ with
parts{\footnote{All parts of $\mathbf{p}$
here are positive.}}
$$(p_1,\ldots,p_a, p'_1,\ldots, p'_b),
$$
where the $p_i$ are odd and the $p'_j$ are even.
We see
$$d+\ell = b \mod 2 \ .$$

If $d+\ell$ is odd, then $b=2r-1$ for $r>0$.
Let $\Gamma_{\mathbf{p}}$
be the dual graph obtained by  the following
construction: 
\begin{eqnarray*}
\Gamma_{\mathbf{p}}& =& 
A\left(\frac{p_1+1}{2}, \ldots, \frac{p_a+1}{2}\right)
\\
& & \  | \\ & &
B\left(\frac{p'_1}{2}, \ldots,\frac{p'_{r-1}}{2},
\frac{p'_r}{2}+1\ | \
\frac{p'_{r+1}}{2}+1 ,\ldots,\frac{p'_{2r-1}}{2}+1\right)
\
, 
\end{eqnarray*}
where the graphs are attached at the first marking of $A$
and the unique marking of $B$.
The graph $\Gamma_{\mathbf{p}}$ has a unique marking (obtained
from the second marking of $A$).
The genus of $\Gamma_{\mathbf{p}}$ is easily calculated,
\begin{equation}\label{vvv23}
2g(\Gamma_{\mathbf{p}})-1 = d + a+2r-1 = d+ \ell \ . 
\end{equation}
 If $a=0$, then 
$\Gamma_{\mathbf{p}}$ consists just of $B$, but the
genus and marking results are the same.

The dual graph $\Gamma_{\mathbf{p}}$ determines a stratum
in $M_{g(\Gamma_{\mathbf{p}}),1}^c$ which is 
a product of the moduli spaces,
$$\prod_{v\in \text{Vert}(\Gamma_{\mathbf{p}})} 
M^c_{g(v),\text{val}(v)}  \ \rarr\  M_{g(\Gamma_{\mathbf{p}}),1}^c\ .$$ 
The socle dimensions of $M^c_{g(v),\text{val}{v}}$
for $v\in \text{Vert}(\Gamma_{\mathbf{p}})$
are exactly the parts of $d$.

If $d+\ell$ is even, then $b$ must be even.
If $b>0$, then $$b=2r-1 +1$$ for $r>0$.
Let 
\begin{eqnarray*}
\Gamma_{\mathbf{p}}& =&  
A\left(\frac{p_1+1}{2}, \ldots, \frac{p_a+1}{2}\right) - 
C_{\frac{p'_{2r}}{2}}^* - E \\
& & \  | \\
& & 
B\left(\frac{p'_1}{2}, \ldots,\frac{p'_{r-1}}{2},
\frac{p'_r}{2}+1\ | \
\frac{p'_{r+1}}{2}+1 ,\ldots,\frac{p'_{2r-1}}{2}+1\right)\ .
\end{eqnarray*}
where the graphs $A$ and $B$
are attached at the markings.
The graph $\Gamma_{\mathbf{p}}$ has a unique marking (on
$C^*_{{\frac{p'_{2r}}{2}}}$) and an
elliptic tail $E$.
The genus of $\Gamma_{\mathbf{p}}$ is 
\begin{equation}\label{vvv24}
2g(\Gamma_{\mathbf{p}})-1 = d + a +2r+2-1 = d+ \ell +1 \ . 
\end{equation}
If $a=0$, then  $A$ is empty, but the genus and marking results
are the same.
The socle dimensions of $M^c_{g(v),\text{val}{v}}$
for $v\in \text{Vert}(\Gamma_{\mathbf{p}})$
are exactly the parts of $d$ together with 0 for the elliptic
tail.

If $d+\ell$ is even and $b=0$,
let 
\begin{eqnarray*}
\Gamma_{\mathbf{p}}& =& 
 A\left(\frac{p_1+1}{2}, \ldots, \frac{p_a+1}{2}\right)- E\ . 
\end{eqnarray*}
The graph $\Gamma_{\mathbf{p}}$ has a unique marking (obtained
from the first marking of $A$) and ends in
the elliptic tail $E$.
The genus of $\Gamma_{\mathbf{p}}$ is
\begin{equation}\label{vvv25} 
2g(\Gamma_{\mathbf{p}})-1 = d +a+2-1 = d+ \ell +1 \ . 
\end{equation}
The socle dimensions of $M^c_{g(v),\text{val}{v}}$
for $v\in \text{Vert}(\Gamma_{\mathbf{p}})$
are exactly the parts of $d$ together with 0 for the elliptic
tail.

We now turn to the proof of Theorem \ref{izz} in the $n=1$
case.  We will prove 
\begin{equation}\label{kkp2}
\text{dim}_{\mathbb{Q}} \ \kappa^d(M_{g,1}^c) \geq 
|P(d,2g-1-d)|
\end{equation}
by intersecting $\kappa$ monomials with tautological
classes.

Let
$\mathbf{p} \in P(d, 2g-1 -d)$
be a partition of length $\ell$.
Let $\Gamma_{\mathbf{p}}$ be the dual graph of genus
$g(\Gamma_{\mathbf{p}})$ obtained by the above
constructions. Since 
$$2g-1 \geq d + \ell\ , $$
equations \eqref{vvv23}-\eqref{vvv25}  imply
$$g-g(\Gamma_{\mathbf{p}}) = \delta \geq 0\ .$$

We associate to 
$\mathbf{p}$ a class $w_{\mathbf{p}} \in R^{2g-2-d}(M_{g,1}^c)$
by the following construction.
Let $v^*\in \text{Vert}(\Gamma_{\mathbf{p}})$
be the vertex which carries the marking.
Increase the genus of $v^*$ by $\delta$. The resulting
graph determines a stratum 
$$W_{\mathbf{p}} \subset M_{g,1}^c$$
 of codimension
$2g(\Gamma_{\mathbf{p}})-2-d$.
Let 
$$w_{\mathbf{p}} = \psi_1^{2\delta} \cdot[ W_{\mathbf{p}}]
\in R^{2g-2-d}(M_{g,1}^c)\ .$$
The pairing on $P(d,2g-1-d)$ given by
\begin{equation}\label{mrt}
(\mathbf{p},\mathbf{q}) \mapsto \int_{\overline{M}_{g,1}} 
\kappa_{\mathbf{p}} \cdot w_\mathbf{q} 
\end{equation}
is upper-triangular. The diagonal elements
are nonvanishing because
$$\int_{\overline{M}_{h}} \kappa_{2h-3} \lambda_h =
\frac{2^{2h-1}-1}{2^{2h-1}} \frac{|B_{2h}|}{(2h)!}
   \neq 0\ ,$$
$$
\int_{\overline{M}_{h,1}} \psi_1^{k}\kappa_{2h-2-k} \lambda_h 
= \binom{2h-1}{k}
\int_{\overline{M}_h} \kappa_{2h-3} \lambda_h \
  \neq 0 \ 
$$
by \cite{FP}. Here, $B_{2h}$ is the Bernoulli number.
Hence, the pairing \eqref{mrt} is nonsingular and the bound \eqref{kkp2} 
is
established.

\subsubsection{Case $n=2$}
We will need an additional dual graph type.
A dual graph of type $\widetilde{B}(g_1, \ldots, g_r|h_1, \ldots, h_{r-1})$
with $g_i,  h_j \geq 1$
is comb of $2r-1$ vertices with  3 markings.
The corresponding curves are of the form:
$$
 \begin{array}{ccccccccc}
 C^*_{g_1}&   -&  C_{g_2} &   - &  \ldots  
&   - &   C_{g_{r-1}} & - & C^{**}_{g_r}  \vspace{5pt}\\
\vspace{5pt}
    | &    &   | &   &    &    &   | & &  \\
\vspace{5pt}
C_{h_1}&     &  C_{h_2} &    &  \ldots  &    &   
C_{h_{r-1}}\ . &  &   
\end{array}
$$ 
There are $r$ vertices of valence 3 and $r-1$ vertices
of valence 1. The marking is included in the valence count.

As before,
let $\mathbf{p}\in P(d)$ be a partition of length $\ell=a+b$ with
parts
$$(p_1,\ldots,p_a, p'_1,\ldots, p'_b),
$$
where the $p_i$ are odd and the $p'_j$ are even.

If $d+\ell$ is even, then $b$ must be even.
If $b>0$, then $$b=2r-1 +1$$ for $r>0$.
Let 
\begin{eqnarray*}
\widetilde{\Gamma}_{\mathbf{p}}& =&  
A\left(\frac{p_1+1}{2}, \ldots, \frac{p_a+1}{2}\right) - 
C_{\frac{p'_{2r}}{2}+1} \\
& & \  | \\
& & 
\widetilde{B}\left(\frac{p'_1}{2}, \ldots,\frac{p'_{r-1}}{2},
\frac{p'_r}{2} \ | \
\frac{p'_{r+1}}{2}+1 ,\ldots,\frac{p'_{2r-1}}{2}+1\right)\ .
\end{eqnarray*}
where the graphs $A$ and $\widetilde{B}$
are attached at the initial markings.
The graph $\widetilde{\Gamma}_{\mathbf{p}}$ has a two markings (on
the extremal component of $\widetilde{B}$).
The genus of $\widetilde{\Gamma}_{\mathbf{p}}$ is 
\begin{equation}\label{vvv28}
2g(\widetilde{\Gamma}_{\mathbf{p}}) = d + a +2r = d+ \ell   \ . 
\end{equation}
If $a=0$, then  $A$ is empty, but the genus and marking results
are the same.
The socle dimensions of $M^c_{g(v),\text{val}{v}}$
for $v\in \text{Vert}(\widetilde{\Gamma}_{\mathbf{p}})$
are exactly the parts of $d$.

If $d+\ell$ is even and $b=0$,
let 
\begin{eqnarray*}
\widetilde{\Gamma}_{\mathbf{p}}& =& 
 A\left(\frac{p_1+1}{2}, \ldots, \frac{p_a+1}{2}\right) \ . 
\end{eqnarray*}
The graph $\widetilde{\Gamma}_{\mathbf{p}}$ has two markings.
The genus of $\widetilde{\Gamma}_{\mathbf{p}}$ is
\begin{equation}\label{vvv29} 
2g(\Gamma_{\mathbf{p}}) = d +a  = d+ \ell  \ . 
\end{equation}
The socle dimensions of $M^c_{g(v),\text{val}{v}}$
for $v\in \text{Vert}(\widetilde{\Gamma}_{\mathbf{p}})$
are exactly the parts of $d$.

If $d+\ell$ is odd, then $b=2r-1$ for $r>0$.
Let 
\begin{eqnarray*}
\widetilde{\Gamma}_{\mathbf{p}}& =& 
A\left(\frac{p_1+1}{2}, \ldots, \frac{p_a+1}{2}\right)-E
\\
& & \  | \\ & &
\widetilde{B}\left(\frac{p'_1}{2}, \ldots,\frac{p'_{r-1}}{2},
\frac{p'_r}{2}\ | \
\frac{p'_{r+1}}{2}+1 ,\ldots,\frac{p'_{2r-1}}{2}+1\right)
\
, 
\end{eqnarray*}
where the graphs $A$ and $\widetilde{B}$ are attached at the initial markings.
The graph $\widetilde{\Gamma}_{\mathbf{p}}$ has two
markings  (on the extremal component of $\widetilde{B}$).
The genus of $\widetilde{\Gamma}_{\mathbf{p}}$ is 
\begin{equation}\label{vvv30}
2g(\widetilde{\Gamma}_{\mathbf{p}}) = d + a+2(r-1)+2 = d+ \ell+1 \ . 
\end{equation}
 If $a=0$, then 
$A$ is empty, but the
genus and marking results are the same.
The socle dimensions of $M^c_{g(v),\text{val}{v}}$
for $v\in \text{Vert}(\Gamma_{\mathbf{p}})$
are exactly the parts of $d$ together with 0 for the
elliptic tail.

The proof of Theorem \ref{izz} now follows the $n=1$
case.  
Let
$$\mathbf{p} \in P(d, 2g -d)$$
be a partition of length $\ell$.
Let $\widetilde{\Gamma}_{\mathbf{p}}$ be the dual graph of genus
$g(\widetilde{\Gamma}_{\mathbf{p}})$ obtained by the above
constructions. Since 
$$2g \geq  d + \ell\ , $$
we see
$g-g(\widetilde{\Gamma}_{\mathbf{p}}) = \delta \geq 0$ .

We associate to 
$\mathbf{p}$ a class $\widetilde{w}_{\mathbf{p}} \in R^{2g-1-d}(M_{g,2}^c)$
by the following construction.
Let $v^*\in \text{Vert}(\widetilde{\Gamma}_{\mathbf{p}})$
be the vertex which carries the first marking.
Increase the genus of $v^*$ by $\delta$. The resulting
graph determines a stratum 
$$\widetilde{W}_{\mathbf{p}} \subset M_{g,2}^c$$
 of codimension
$2g(\widetilde{\Gamma}_{\mathbf{p}})-1-d$.
Let 
$$\widetilde{w}_{\mathbf{p}} = \psi_1^{2\delta} 
\cdot[ \widetilde{W}_{\mathbf{p}}]
\in R^{2g-1-d}(M_{g,2}^c)\ .$$
The pairing on $P(d,2g-d)$ given by
\begin{equation*}
(\mathbf{p},\mathbf{q}) \mapsto \int_{\overline{M}_{g,2}} 
\kappa_{\mathbf{p}} \cdot \widetilde{w}_\mathbf{q} 
\end{equation*}
is upper-triangular and nonsingular as before.
Hence,
$$
\text{dim}_{\mathbb{Q}} \ \kappa^d(M_{g,2}^c) \geq 
|P(d,2g-d)|,
$$
which is the required bound.

\subsubsection{Case $n\geq 3$}
The higher pointed cases are easily reduced to the
$1$ or $2$ pointed cases depending upon the 
parity of $n$. The trading of genera for markings
follows the proof of Theorem 2 in Section \ref{pttt}.
We leave the details to the reader. \qed

\vspace{+8 pt}
\noindent
Department of Mathematics\\
Princeton University\\
rahulp@math.princeton.edu.

\end{document}